\chardef\bslash=`\\ 
\newtheorem{theorem}{Theorem}[section]
\newtheorem{corollary}[theorem]{Corollary}
\newtheorem{lemma}[theorem]{Lemma}
\newtheorem{proposition}[theorem]{Proposition}
\newtheorem{conjecture}{Conjecture}
\newtheorem{thmx}{Theorem}
\newtheorem{corx}[thmx]{Corollary}
\newtheorem{definitionn}{Definition}[section]
\newtheorem{remark}{Remark}[section]
\newtheorem{assumption}{Assumption}[section]
\newtheorem{question}{Question}
\newcommand{\N}{\mathbb{N}}
\newcommand{\CC}{\mathcal{C}}
\newcommand{\HH}{\mathcal{H}}
\newcommand{\VV}{\mathcal{V}}
\newcommand{\DD}{\mathcal{D}}
\newcommand{\LL}{\mathcal{L}}
\newcommand{\RR}{\mathcal{R}}
\newcommand{\QQ}{\mathcal{Q}}
\newcommand{\UU}{\mathcal{U}}
\newcommand{\Z}{\mathbb{Z}}
\newcommand{\Q}{\mathbb{Q}}
\newcommand{\R}{\mathbb{R}}
\newcommand{\C}{\mathbb{C}}
\newcommand{\T}{\mathbb{T}}
\newcommand{\TT}{\mathcal{T}}
\newcommand{\AKsm}{\mathcal{AK}^{\infty}}
\newcommand{\ft}{\mathfrak{t}}
\newcommand{\SSS}{\mathcal{S}}
\newcommand{\Diff}{\mathrm{Diff}^{\infty}}
\newcommand{\Diffm}{\mathrm{Diff} ^{\infty} _{\mu} (M)}
\newcommand{\Id}{\mathrm{Id}}
\def\a{\alpha }
\def\sm{C^{\infty} }
\def\smm{C^{\infty} _{\mu} }
\def\k{\kappa }
\def\l{\lambda }
\def\L{\Lambda }
\def\r{\rho}
\def\s{\sigma}
\def\t{\tau}
\def\D{\Delta}
\def\d{\delta}
\def\w{\omega}
\def\e{\varepsilon}
\def\f{\varphi}
\def\.{\cdot }
\def\ra{\rightarrow}
\def\hra{\hookrightarrow}
\def\begeq{\begin{equation*}}
\def\endeq{\end{equation*}}
\newcommand{\vertiii}[1]{{\left\vert\kern-0.25ex\left\vert\kern-0.25ex\left\vert #1 
    \right\vert\kern-0.25ex\right\vert\kern-0.25ex\right\vert}}
\numberwithin{equation}{section}
\title{
\textsc{\textbf{Cohomological rigidity and the Anosov-Katok construction}}\\
\author{Nikolaos Karaliolios \footnote{Universit\'{e} de Lille. Email: nikolaos.karaliolios@univ-lille.fr}
}
}
\begin{document}

\maketitle

\begin{abstract}
We provide a general argument for the failure of Anosov-Katok-like
constructions (as in \cite{AFKo2015} and \cite{NKInvDist}) to produce Cohomologically
Rigid diffeomorphisms in manifolds other than tori. A $C^{\infty }$ smooth diffeomorphism $f $
of a compact manifold $M$ is Cohomologically Rigid iff the equation, known as Linear
Cohomological one,
\begin{equation*}
\psi \circ f - \psi = \varphi
\end{equation*}
admits a $C^{\infty }$ smooth solution $\psi$ for every $\varphi$ in a codimension $1$
closed subspace of $C^{\infty } (M, \mathbb{C} )$.

As an application, we show that no Cohomologically
Rigid diffeomorphisms exist in the Almost Reducibility regime for quasi-periodic
cocycles in homogeneous spaces of compact type, even though the Linear Cohomological
equation over a generic such system admits a solution for a dense subset of functions
$\varphi$. We thus confirm a conjecture by M. Herman and A. Katok in that context and
provide some insight in the mechanism obstructing the construction of counterexamples.

\textbf{MSC classification}: Primary: 37A20, Secondary: 37C05, 46F05

\textbf{Key words}: Cohomological equations, Cohomological Rigidity, Cocycles,
Invariant Distributions, Katok conjecture, Anosov-Katok method
\end{abstract}

\tableofcontents

\section{Introduction}

\subsection{Generalities and statement of the results} \label{sec generalities & results}

Let $M$ be a compact, $\sm $-smooth oriented $d$-dimensional manifold without
boundary, furnished with the volume form $\mu$. Let us also consider
$\sm _{\mu} (M, \C )$, the space of smooth functions having
$0$ mean 	with respect to $\mu$.
We study the solvability of the linear cohomological equation over any given volume
preserving diffeomorphism $f \in \Diffm $, i.e. of the equation
\begin{equation} \label{lin cohom eq}
\psi \circ f - \psi = \f
\end{equation}
where the function $\f \in \smm (M) $ is known and the unknown is $\psi \in \smm (M)$.
This equation is central in the study of dynamical systems, as it arises naturally
(cf. \cite{KatHass}, \S$2.9$), in the construction of smooth volume forms (cf. op. cit.  \S $5.1$),
in the construction of conjugations as in K.A.M. theory (cf. op. cit. \S$15.1$), and in the study
of ergodic sums (cf. \cite{FlaFo03}).

Let us establish some terminology concerning eq. \ref{lin cohom eq}. A function $\f
\in \smm (M)$ is called a \textit{coboundary over} $f$ if eq. \ref{lin cohom eq}
admits a solution $\psi \in \smm (M)$. The space of coboundaries over $f$ is denoted by
$Cob ^{\infty} (f) \subset \smm (M)$. A diffeomorphism $f  \in \Diffm $ is called \textit{Cohomologically Rigid} ($CR$),
iff eq. \ref{lin cohom eq} admits a solution for every $\f \in \smm (M)$, i.e.
\begeq
f \in CR(M) \iff Cob ^{\infty}(f) \equiv \smm (M)
\endeq
The diffeomorphism $f$ is called Distributionally Uniquely Ergodic ($DUE$), iff coboundaries are dense in
$\smm (M)$, i.e.
\begeq
f \in DUE(M) \iff \overline{Cob ^{\infty}(f)}^{cl_{\infty}} = \smm (M)
\endeq

A celebrated example of $DUE$ diffeomorphisms are minimal rotations in tori. Up to date,
the only known examples of $CR $ diffeomorphisms are Diophantine rotations in tori, i.e. 
rotations that are badly approximated by periodic ones
(cf. \S \ref{section rot tori} for the precise definition).
M. Herman (cf. \cite{Herm80}\footnote{Probl\`{e}me 2, p. 813. We thank St. Marmi for this reference. We also note the
following. If $G$ is a semi-simple compact Lie group and $E_{r} : \T \ra \TT \hra G$ is a regular periodic geodesic
of topological degree $r$ taking values in the torus $\TT$, then the orbit under conjugation of a cocycle in $\T ^{d} \times G$
defined by $(\a , E_{r}(\. ))$, with $\a \in DC$, is locally closed and of finite codimension (cf. ch. 8 of \cite{NKPhD}). The
codimension can go as low as $2$ for $G=SO(3)$ and geodesics of degree $1$, but in this class of examples the
codimension cannot be equal to $1$. The dynamics actually fiber over dynamics in $\T \times \TT$, and the codimension
is (known to be) finite in the space of quasi-periodic cocycles and therefore this class does not
produce counterexamples to the question raised by M. Herman.})
and then A. Katok (cf. \cite{HurdKatConj}) have posed the following conjecture.
\begin{conjecture} \label{conj CR}
The only examples, up to smooth diffeomorphism and conjugacy, of $CR(M)$
diffeomorphisms are Diophantine translations in tori.
\end{conjecture}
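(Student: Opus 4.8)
The plan is to force a $CR$ diffeomorphism $f\in\Diffm$, through a chain of increasingly rigid structural conclusions, into the class of Diophantine torus translations. I would begin with the soft consequences, which are essentially dictated by functional analysis. If $f\in CR(M)$ then $\Cobf=\smm(M)$; since every $f$-invariant distribution annihilates all coboundaries, this forces the only $f$-invariant distributions on $M$ to be the multiples of $\mu$, so $f$ has no invariant probability measure besides $\mu$, and as $\mu$ has full support $f$ is minimal and uniquely ergodic. In particular the only continuous $f$-invariant function is constant, so the coboundary operator $L_f\psi=\psi\circ f-\psi$ attached to eq. \eqref{lin cohom eq} is injective on $\smm(M)$; being also surjective and continuous between Fr\'echet spaces, it is a topological isomorphism by the open mapping theorem, whence the solution operator $L_f^{-1}$ is continuous, hence tame: there is $\sigma\ge0$ with $\|L_f^{-1}\varphi\|_{C^s}\le C_s\|\varphi\|_{C^{s+\sigma}}$ for every $s$. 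This tame estimate is the quantitative shadow, at the level of an abstract $f$, of a Diophantine condition on a rotation; already it implies that the Birkhoff sums of every zero-mean smooth $\varphi$ are bounded in each $C^s$-norm uniformly in the number of iterates, in terms of a single higher norm of $\varphi$.

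The central step is to upgrade this to a rigid \emph{spectral} picture: that the Koopman operator $U_f$ on $L^2(M,\mu)$ has pure point spectrum, with eigenvalues forming a countable subgroup $\Gamma\subset\mathbb{T}$, each eigenspace one-dimensional (simple spectrum, by unique ergodicity), \emph{and} that the corresponding eigenfunctions $\{h_\gamma\}_{\gamma\in\Gamma}$ are $C^\infty$. Heuristically a nontrivial continuous spectral component would produce, through the spectral theorem and Wiener's lemma, a function whose Birkhoff sums grow like $o(N)$ without remaining bounded, in tension with the tame estimate of the first step; and once pure point spectrum is secured one would try to bootstrap the regularity of each (modulus-one) eigenfunction from $L^2$ to $C^\infty$, e.g. via the tame estimate applied to smooth approximations or via elliptic-type arguments on the skew product defined by $h_\gamma$ through the relation $h_\gamma\circ f-h_\gamma=(\gamma-1)h_\gamma$. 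With smooth eigenfunctions one obtains a smooth equivariant map $\Phi:M\to K$ onto a compact abelian group --- the closure of the joint image of finitely many $h_\gamma$ --- intertwining $f$ with a translation $\tau_\alpha$; by the Halmos--von Neumann theorem $\Phi$ is a measure isomorphism, hence $K$ is connected, so a torus $\mathbb{T}^k$, and minimality together with $\dim M=d$ forces $\Phi$ to be a diffeomorphism with $k=d$. Then $f$ is smoothly conjugate to $x\mapsto x+\alpha$ on $\mathbb{T}^d$, and the tame estimate of the first step excludes any Liouville $\alpha$ (which carries an explicit obstruction violating even $DUE$), leaving $\alpha\in DC$.

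I expect the whole difficulty to sit in the central step, and precisely in the two passages there that are \emph{not} formal: extracting pure point spectrum \emph{and} smoothness of the eigenfunctions from the cohomological estimate --- the diffeomorphism-category counterpart of the hard direction of the Greenfield--Wallach circle of problems, which is open already for vector fields --- and then the claim that the resulting Kronecker factor is realised smoothly and with the correct dimension, since a priori the measure-theoretic torus factor could fail to be an immersion, or $K$ could be a torus of dimension strictly less than $d$, or even a solenoid (the higher-dimensional shadow of Denjoy-type, almost automorphic phenomena). Ruling this out appears to require genuine geometric information about $df$ and about the $f$-invariant foliation by the level sets of $\Phi$ --- which would have to be a smooth, parallelisable fibration over $\mathbb{T}^d$ --- rather than anything that follows from the spectral picture alone. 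This is where the analysis of the present paper is meant to feed back: for the diffeomorphisms produced by Anosov--Katok-type schemes, the natural reservoir of candidate counterexamples on a non-toral $M$, the results below show that the tower of conjugacies is smoothly modelled on torus translations, so the construction cannot leave the torus. This both confirms Conjecture \ref{conj CR} within that regime and exhibits the obstruction as a genuine phenomenon rather than a limitation of the currently available techniques.
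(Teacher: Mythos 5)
The statement you are addressing is a conjecture, not a theorem of the paper: the paper does not prove it, and nobody has. What the paper establishes is the weaker negative statement that Anosov--Katok-type schemes and the Almost Reducibility regime cannot produce counterexamples (cor.~\ref{cor no counter-ex}, thm.~\ref{thm no counter ex in KAM informal}, cor.~\ref{cor no counter-ex in SW}). So there is no ``paper's own proof'' of this statement against which to compare, and I assess your sketch on its own merits.

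Your soft consequences hold up through the Fr\'echet open mapping theorem: $CR\Rightarrow DUE\Rightarrow$ unique ergodicity and minimality, and the coboundary operator $\psi\mapsto\psi\circ f-\psi$ is a continuous bijection of $\smm(M)$, hence a topological isomorphism with continuous inverse. The first genuine gap is the very next word, ``hence tame.'' Continuity of the inverse between Fr\'echet spaces only gives, for each $s$, some $s'(s)$ and $C_s$ with $\|\psi\|_{C^s}\le C_s\|\varphi\|_{C^{s'(s)}}$ for the solution $\psi$ of $\psi\circ f-\psi=\varphi$; nothing bounds the loss $s'(s)-s$ uniformly in $s$. A fixed $\sigma$ is precisely a tame estimate, which is strictly stronger than mere continuity and has to be extracted from the dynamics, not from abstract functional analysis. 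You treat it as formal, and it is not; since the subsequent steps --- the tension with continuous spectrum, the exclusion of Liouville $\alpha$, the eigenfunction regularity bootstrap --- all lean on this fixed loss, the gap propagates.

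The remainder is an honest programme rather than a proof, and you say so. Your identification of where the real difficulty sits is accurate: extracting pure point spectrum and $C^\infty$ eigenfunctions from the cohomological estimate, and then upgrading the Halmos--von Neumann Kronecker factor from a measure isomorphism to a smooth diffeomorphism onto $\T^d$ --- excluding solenoids, lower-dimensional tori, and non-immersive factor maps --- is exactly what is open. The one thing to correct is that the tame-estimate slip at the start is a harder error than you acknowledge: it is not among the steps you flag as non-formal, and it deserves to be.
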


Anticipating the statement of cor. \ref{cor no counter-ex}, we mention that the goal of
the present article is to show that the Anosov-Katok construction is not an appropriate
tool for constructing counter-examples to this conjecture. This result is, in fact, a positive
one in guise of a negative one, as it allows us to establish that no counter-examples
exist in certain cases, cf. thm \ref{thm no counter ex in KAM informal}.

The importance of this conjecture comes from K.A.M. theory, named after Kolmogorov, Arnol'd
and Moser. A classical theorem of Arnol'd (cf. eg. \cite{KatHass} \S $15.1$)\footnote{See
\cite{NKKAMTor} for a higher dimensional analogue.}
states that a Diophantine rotation $R_{\a} :x \ra x + \a$ on the circle $\T ^{1} = \R / \Z$, if pertrubed to a real analytic
diffeomorphism whose rotation number is $\a$, will be analytically conjugate to the unpertrubed rotation. The proof
is carried out by constructing successive conjugations that make the perturbation ever smaller and showing that the
product of conjugations converges. The very construction of conjugations consists in efficiently solving the cohomological
equation over the rotation $R_{\a}$, and the argument works precisely because Diophantine rotations
are $CR(\T ^{1})$. This theorem of local linearizability is known to be false for non-Diophantine (i.e. Liouville) rotations, see
\cite{YocAst}. K.A.M. theory studies perturbations of Diophantine rotations in different contexts, and the general conclusion
is that they tend to be rigid, i.e. persist under perturbations. Application of the K.A.M.
machinery, however, depends crucially and in a general way
on the Cohomological Rigidity of the unperturbed model. The conjecture, put informally, states that this very
powerful toolcase's application is restricted to the local study of Diophantine rotations.

On the other side of the spectrum, the Anosov-Katok method of approximation by conjugation (cf. \S
\ref{subsec AK}) works for Liouville-type
rotations, i.e. rotations that are very well approximated by periodic ones. There, K.A.M.
theory fails to apply and establish rigidity, and the Anosov-Katok method actually shows that
there is no rigidity by constructing, for example, diffeomorphisms of the disc $\{ x \in \R ^{2}, \| x \| \leq 1 \}$ that
are weakly mixing and arbitrarily close to given Liouville rotations around $0 \in \R ^{2}$ (cf. the
original paper \cite{AnKat1970}). In the same context but for perturbations of
Diophantine rotations, K.A.M.
theory concludes the persistence of invariant circles (cf. \cite{Russ02}). Weak mixing is an equidistribution property
for the orbits of a dynamical system, and it is stonger than ergodicity. Consequently, the existence of an
invariant curve is an obstruction to weak mixing.

The efficiency of the Anosov-Katok construction in producing realizations of exotic dynamics makes it a good
candidate for producing counter-examples to the conjecture. To our best knowledge, there have been two recent
such attempts, both of them in spaces of quasi-periodic
skew-product diffeomorphism spaces, \cite{AFKo2015} and \cite{NKInvDist}. Both attempts
fail for slightly different reasons, but in the present article we will establish the
reason why such attempts should not be expected to produce counter-examples to the
conjecture. The reason is that the respective constructions share a key ingredient,
the Anosov-Katok method.

Before coming to the two articles cited here above, let us quickly establish some notation, which
we will also use in \S \ref{sec AR}. The notation concerns the
space of skew-product diffeomorphisms $SW^{\infty}(\T ^{d} , P)$, where $\T = \R ^{d} / \Z ^{d}$
and either $P=G $ is a compact Lie group or a homogeneous space $P=G / H$, where $H$ is a closed
subrgoup of $G$. If we let $\a \in \T ^{d}$ be a translation and $A(\. ):
\T ^{d} \ra G $ be $\sm $-smooth, then the element $(\a ,A(\. )) \in SW^{\infty}(\T ^{d} , P)$
acts on $\T ^{d} \times P$ by
\begeq
(\a ,A(\. ))(x, s.H) \mapsto (x +\a ,A(x).s.H) , \forall (x,s) \in \T ^{d} \times G
\endeq
Such an action is called a \textit{cocycle over} $\a$, or simply a \textit{cocycle}.
The space of cocycles over a fixed rotation $\a$ is denoted by $SW_{\a}^{\infty}(\T ^{d} ,P)$ and
$\a$ is called the \textit{frequency} of the cocycle.
We obviously have $SW^{\infty}(\T ^{d} , P) \subset \mathrm{Diff}^{\infty }_{\mu} (\T ^{d} \times P)$ where $\mu$ is the
product of the Lebesgue-Haar measures in $\T ^{d}$ and $P$.

Conjugation within the class of cocycles is given by fibered conjugation. Two cocycles over
the same rotation, $(\a ,A_{i}(\. ))$ for $i=1,2$, are conjugate iff there exists a
$\sm $-smooth mapping $B(\.  ):\T ^{d} \ra G $ such that
\begeq
(\a ,A_{1}(\. )) = (\a ,B(\. +\a ). A_{2}(\. ). B^{-1}(\.))
\endeq
This corresponds to a change of variables in the phase space $\T ^{d} \times P$ following
\begeq
(x,s . H) \mapsto (x, B(x ).s.H)
\endeq

In \cite{AFKo2015}, the authors worked in the space of skew-product diffeomorphisms of
$M = \T \times P$, where $\T = \R / \Z $ is the one-dimensional torus and $P$ is either a
compact nil-manifold or a homogeneous space of compact type.

They established
genericity of $DUE(M )$ in $\AKsm (\CC )$, the closure of the conjugacy class of cocycles
that are periodic diffeomorphisms of $\T \times P$:
\begeq
\CC =
\{
(p/q , A (\. )) , p,q \in \Z ^{*} , A(\. ) \in \sm ( \T , P) ,  A (\. + (q-1) \frac{p}{q}) \cdots A (\. ) \equiv \Id
\}
\endeq
Note that if $(p/q , A (\. ))$ is as above, then $(p/q , A (\. ))^{q} \equiv \Id \in 
\mathrm{Diff}^{\infty }_{\mu}  (\T ^{d} \times P)$, i.e. $(p/q , A (\. ))$ is a periodic
diffeomorphism of period $q$.

The space $\AKsm (\CC )$ was named after the Anosov-Katok construction for the exact
reason that the proof was based on periodic approximation for the frequency (i.e. the
translation acting on $\T$), and on approximation by periodic diffeomorphisms in the
fibres, i.e. $P$. For these reasons, the authors established genericity of $DUE$ in
$\AKsm (\CC )$, but were not able to address Cohomological Rigidity: Liouville numbers are
generic in $\T$, and they are those expected to be produced by a periodic approximation
argument.

In \cite{NKInvDist}, we studied a non-generic slice of the space of quasi-periodic
skew-product diffeomorphisms of $\T  \times SU(2)$, where the frequency was fixed and
satisfied a condition called Recurrent Diophantine, slightly stricter than a classical
Diophantine one. The result obtained was that $DUE$ is generic even within this non-generic
slice. Additionally, the techniques involved in the proof were precise enough so that
we were able to establish the non-existence
of $CR$ diffeomorphisms in that space. The technique of the proof, even though more precise than that of \cite{AFKo2015},
shares a basic ingredient with the latter: the Anosov-Katok argument in the fibers.

The goal of the present article is to show that the construction of counter-examples to the conjecture,
if any such counter-examples exist, is beyond present understanding of the Anosov-Katok construction.
An informal statement of the main result of the paper, to be made precise by thm
\ref{thm inst AK precise}, is the following.

\begin{thmx} \label{thm inst AK}
Let $\CC $ be a space of diffeomorphisms for which there exists $\s \in \R$ such that
for every $f \in \CC$,
\begeq
\overline{Cob^{\infty}(f)}^{cl_{\s }} \subsetneq C^{\s}_{\mu } (M)
\endeq
Then, a generic diffeomorphism in $\AKsm (\CC ) \subset \Diffm $ is not
Cohomologically Rigid.
\end{thmx}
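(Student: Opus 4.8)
The plan is to exploit the fact that $\AKsm(\CC)$ is, by construction, the closure of a single conjugacy class: every element of $\AKsm(\CC)$ is a limit, in the $\sm$ topology, of conjugates $h_n^{-1} f_0 h_n$ of a fixed periodic diffeomorphism $f_0 \in \CC$ (or of elements of $\CC$). The key point is that Cohomological Rigidity, or even the weaker property that $\overline{Cob^\infty(f)}^{cl_\s}$ fills all of $C^\s_\mu(M)$, must transfer across $\sm$-limits in a controlled way. First I would fix, for each $f \in \CC$, a nonzero continuous linear functional $T_f$ on $C^\s_\mu(M)$ vanishing on $Cob^\s(f)$; the hypothesis $\overline{Cob^\infty(f)}^{cl_\s} \subsetneq C^\s_\mu(M)$ together with Hahn--Banach guarantees such a functional (an \emph{invariant distribution} of order $\s$ for $f$). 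The uniform bound on $\s$ across $\CC$ is what makes this family live in a single fixed dual space $(C^\s_\mu(M))'$, and this uniformity is essential.

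Next I would show that the presence of such a functional is inherited by $\sm$-limits \emph{after conjugation}: if $g = \lim_n h_n^{-1} f h_n$ with $f \in \CC$, then $T_f$ pushed forward by $h_n$, suitably normalized, should converge (along a subsequence, using weak-$*$ sequential compactness of the unit ball of a dual of a separable space, or a Montel-type argument for the relevant function spaces) to a nonzero invariant distribution $T_g$ for $g$ of order $\s$. The content here is that conjugation acts on invariant distributions covariantly, $T_{h^{-1}fh} = h^* T_f$ up to the Jacobian factor coming from $\mu$-preservation, and that the order $\s$ is preserved under the change of variables only up to a loss depending on $h_n$ — this is exactly the technical heart. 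To control that loss one cannot work with arbitrary conjugates; one must instead carry along, as part of the induction defining $\AKsm(\CC)$, a quantitative statement relating the norm of $T_g$ to the $\sm$-convergence, so that the limit functional is genuinely nonzero. I would organize this as: (i) a lemma that a single conjugation $h^*$ maps $(C^\s)'$-functionals to $(C^{\s'})'$-functionals for some $\s' = \s'(h)$, (ii) a lemma that along the Anosov-Katok tower the relevant conjugations are applied at scales where the order loss is summable or absorbed, yielding a limiting functional in a \emph{fixed} $(C^{\s''})'$, and (iii) a genericity statement: the set of $g \in \AKsm(\CC)$ admitting such a nonzero invariant distribution is residual, because it is obtained as a countable intersection of open dense sets indexed by the tower levels.

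Then I would conclude: a generic $g \in \AKsm(\CC)$ admits a nonzero continuous linear functional on $C^{\s''}_\mu(M)$ vanishing on $Cob^\infty(g)$, hence $Cob^\infty(g) \neq \smm(M)$, so $g \notin CR(M)$. The main obstacle I expect is step (ii): controlling the order loss under the unbounded sequence of conjugations $h_n$. In the Anosov-Katok construction the $h_n$ have derivatives blowing up, so a naive pushforward of $T_f$ lands in duals of spaces of ever-growing order, and the limit could collapse to $0$. The resolution must be that one does not push forward a \emph{fixed} $T_f$ but rather builds the invariant distribution for the limit diffeomorphism directly as a weak-$*$ limit of the \emph{normalized} pushforwards $h_n^* T_{f_n}/\|h_n^* T_{f_n}\|_{(C^\s)'}$, and proves the normalization does not escape to infinity in order — equivalently, that $\|h_n^* T_{f_n}\|_{(C^\s)'}$ stays bounded below, which is where the codimension-$1$ (rather than just proper-subspace) structure and the precise form of the conjugations in $\AKsm(\CC)$ enter. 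This is presumably where the paper's hypothesis that the bad subspace is \emph{closed of codimension $1$}, uniformly, does the real work: it pins down the invariant distribution up to scalar, making the normalization canonical and its persistence under limits tractable.
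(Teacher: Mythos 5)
Your approach has a genuine gap at its core, and it is instructive that the paper explicitly rules it out. You propose to exhibit, for a generic $g \in \AKsm(\CC)$, a nonzero invariant distribution of fixed order $\s''$ by taking a weak-$*$ limit of normalized pushforwards $h_n^* T_{f_n}/\|h_n^* T_{f_n}\|_{(C^\s)'}$. But the weak-$*$ limit of a sequence of norm-one functionals can be zero — the unit sphere of a dual Banach space is not weak-$*$ closed — and the paper's Lemma \ref{lem approx dist comp} shows that this is exactly what must happen here: if the sequence of normalized invariant distributions $u_n \in \dot{H}^{-\s}(f_n)$ were precompact (i.e.\ had a nonzero accumulation point), then $g$ would fail to be $DUE$. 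Since the whole point of the Anosov-Katok construction, as established in \cite{AFKo2015} and \cite{NKInvDist}, is that the limit diffeomorphism \emph{is} generically $DUE$, your limiting object will collapse to $0$ generically, and you obtain no obstruction to Cohomological Rigidity. There is no way to ``fix the normalization'' to prevent this, because the collapse is not a defect of the argument — it reflects a genuine property of the limit. Note also that you have misremembered the hypothesis: it is only that $\overline{Cob^\infty(f)}^{cl_\s}$ is a \emph{proper} subspace of $C^\s_\mu(M)$, not closed of codimension $1$; codimension $1$ is the conclusion being negated, not an available assumption.

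The paper's argument takes a fundamentally different route precisely to sidestep this collapse. Rather than producing an invariant distribution for the limit $g$, Proposition \ref{prop approx} constructs a single smooth function $\w \in \smm(M)$ that is \emph{not} a coboundary over $g$, without ever claiming $g$ preserves anything. The mechanism is quantitative: one sums truncated test functions $\w_n$ (supported in low modes, concentrated where the $u_n$ have mass) with coefficients calibrated against the approximation rate $\d_n = d_\s(g, f_n)$. The hypothesis $\d_n = O(\l_{N_n}^{-\infty})$ ensures that the pairing $\langle u_n, \w\rangle$ is dominated at each scale $n$ by the $n$-th term, and Lemma \ref{badness estimate} (an estimate in the spirit of small denominators, using invariance of $u_n$ under $f_n$ together with the composition inequalities) then forces any candidate solution of $\psi \circ g - \psi = \w$ to have $\|\psi\|_{C^{\s+1}} = \infty$. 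The residuality statement in Theorem \ref{thm inst AK precise} is a Baire category argument over explicit open dense sets $V_{j,l}$ defined in terms of the distance to the distribution-preserving approximants; no limiting distribution is needed. Your outline captures the correct setup (Hahn-Banach, uniform order $\s$, pushforward under conjugation) but does not contain the key idea, which is to trade the search for an invariant distribution of $g$ for a lower bound on the norms of would-be solutions, driven by the distributions of the approximants.
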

The space $\AKsm (\CC )$ is defined in \S \ref{subsec AK} as the closure of the conjugacy
class of a class of diffeomorphisms
$\CC$. Theorem \ref{thm pert DUE} provides a similar but less precise statement on
the meagreness of $CR$ in a space where $DUE \setminus CR$ is dense (plus a technical
but important assumption). By $cl _{\s}$, we
denote the closure of a space in the $C^{\s }$ topology.

From the proof of thm. \ref{thm inst AK} and its proof, we also obtain the following corollary.
It is a statement on the Anosov-Katok method,
which is an object that does not admit an unambiguous definition. In \S
\ref{section proof cor}, and more precisely in propositions \ref{example prop rot vec 1},
\ref{example prop rot vec 2}, \ref{example prop rot vec 2 const}
and \ref{example prop comp gr}, we quantify what we mean by "Anosov-Katok-like construction" in the
statement of the corollary, thus making it a meaningful mathematical proposition.
An informal statement of the corollary is the following.
\begin{corx} \label{cor no counter-ex}
Counter-examples to the Herman-Katok conjecture, if they exist, cannot be obtained by an
Anosov-Katok-like construction.
\end{corx}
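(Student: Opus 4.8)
The plan is to deduce Corollary~\ref{cor no counter-ex} from Theorem~\ref{thm inst AK}, the work being twofold: (i) fixing a working meaning of ``Anosov--Katok-like construction'' under which the object it produces is guaranteed to lie in a space $\AKsm(\CC)$, and (ii) checking that the relevant model classes $\CC$ satisfy the hypothesis of Theorem~\ref{thm inst AK}. For (i), I would follow \S\ref{section proof cor}: an Anosov--Katok-like construction is any scheme that realizes its target diffeomorphism as a $C^{\infty}$ limit of successive (ambient or fibered) conjugates of elements of a fixed class $\CC$ of \emph{models} --- periodic rotations of a torus obtained by rational approximation of the frequency, periodic cocycles over such rotations, or the (fiberwise) periodic skew-products underlying \cite{AFKo2015} and \cite{NKInvDist}. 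With this reading the output lies in the closure of the conjugacy class of $\CC$, i.e. in $\AKsm(\CC)$, and Propositions~\ref{example prop rot vec 1}, \ref{example prop rot vec 2}, \ref{example prop rot vec 2 const} and \ref{example prop comp gr} are exactly the case-by-case verifications of this membership.

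For (ii), suppose first that $f\in\CC$ is a periodic diffeomorphism, $f^{q}=\Id$. Then eq.~\ref{lin cohom eq} telescopes: if $\psi\circ f-\psi=\f$ then
\begeq
S_{q}\f:=\sum_{j=0}^{q-1}\f\circ f^{j}=\psi\circ f^{q}-\psi=0 ,
\endeq
so $Cob^{\infty}(f)\subseteq\ker S_{q}$. Now $S_{q}$, a finite sum of composition operators with the $\sm$-smooth $f$, is bounded on $C^{\s}_{\mu}(M)$ for every $\s$; hence $\ker S_{q}$ is closed in $C^{\s}_{\mu}(M)$ and $\overline{Cob^{\infty}(f)}^{cl_{\s}}\subseteq\ker S_{q}$. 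Finally $\ker S_{q}$ is a \emph{proper} subspace: since $f$ is periodic it is not ergodic, so there are nonzero smooth, mean-zero, $f$-invariant functions $g$ (average any non-invariant smooth function over the finite group generated by $f$ and subtract its mean), and $S_{q}g=qg\neq 0$. Thus the hypothesis of Theorem~\ref{thm inst AK} holds for $\CC$ with, e.g., $\s=0$, uniformly over $\CC$. When $\CC$ contains non-periodic models --- as in the fixed-frequency slice of \cite{NKInvDist} --- the same conclusion holds with $\ker S_{q}$ replaced by the common kernel of the finite family of invariant distributions of uniformly bounded Sobolev order exhibited there, and this is the content of the corresponding proposition of \S\ref{section proof cor}.

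With (i) and (ii) in hand, Theorem~\ref{thm inst AK} applies to each $\AKsm(\CC)$ and shows that the non-Cohomologically-Rigid diffeomorphisms form a residual subset of $\AKsm(\CC)$. An Anosov--Katok-like construction is, by its nature, a Baire-category device: its flexibility is precisely the ability to realize, inside $\AKsm(\CC)$, any behaviour shared by a residual set of parameters, and the diffeomorphisms it reaches are of exactly this kind. Intersecting the residual set it produces with the residual set of non-$CR$ maps still leaves a residual set, so the construction produces non-$CR$ diffeomorphisms throughout the regime in which it produces anything at all; it cannot single out the (at best meagre) set of $CR$ maps. This is Corollary~\ref{cor no counter-ex}.

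I expect the real difficulty to lie not in the computation of step (ii) but in step (i) and in the passage from ``generically not $CR$'' to ``cannot be obtained'': one must argue, case by case, that the obstruction behind Theorem~\ref{thm inst AK} --- coboundaries over the models trapped in the kernel of a fixed nonzero continuous operator (Birkhoff sums up to the period along periodic orbits, or finitely many invariant distributions), a constraint that, thanks to the norm control intrinsic to the Anosov--Katok scheme, survives in the $C^{\s}$-limit --- is a structural feature of the method rather than an incidental shortfall. Making that precise in each concrete setting is the purpose of Propositions~\ref{example prop rot vec 1}--\ref{example prop comp gr}.
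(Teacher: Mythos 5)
Your plan is to derive the corollary from Theorem~\ref{thm inst AK}: show $CR$ is meagre in $\AKsm(\CC)$, and then assert that an Anosov--Katok-like construction, being a ``Baire-category device,'' can only reach residual sets and therefore misses the meagre $CR$. This is not how the paper argues, and the last step is a genuine gap. ``$CR$ is meagre'' does not entail ``$CR$ is unreachable by the construction'': a meagre set can be nonempty, and the Anosov--Katok scheme is a targeted iterative construction, not a random draw from a residual set. The paper is explicit about this: it points out that there is no a priori topological reason why $CR(M)\cap\SSS$ should not be empty, and states that Theorem~\ref{thm inst AK}/Proposition~\ref{prop approx} only handle the \emph{fast} (exponential, Liouville-like) approximation regime, whereas ``the proof of cor.~\ref{cor no counter-ex} focuses on the $f\in\AKsm(\CC)$ for which this rate fails, and becomes Diophantine-like.'' So the corollary is precisely about what happens in the complement of the regime your argument covers.

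The paper's actual route is different in structure. It formalizes ``Anosov--Katok-like construction'' through Assumptions~\ref{ass inf dim ob}--\ref{ass decay eta} (on the cohomology of $\CC$, fast convergence of the $\tilde f_n$, polynomial bound on the conjugations, and a \emph{polynomial} --- not superpolynomial --- decay of $\tilde\eta_{s,n}$ relative to $\l_{N_n}$, which is exactly the regime outside Proposition~\ref{prop approx}). It then splits into the ``compact group case'' (Assumption~\ref{ass comp gr case}) and the ``rotation vector case'' (Assumptions~\ref{ass rot vec case} and \ref{ass cob op C}), and proves \emph{for every admissible sequence}, not just generically, that the limit is not $CR$: Propositions~\ref{example prop comp gr}, \ref{example prop rot vec 1}, \ref{example prop rot vec 2} track, via Lemmas~\ref{badness estimate}, \ref{badness estimate cob}, \ref{lem closed graph}, the blow-up of any would-be solution of the cohomological equation, using the pushed-forward obstructions $(H_n)_*\tilde u_n$ and the orders of magnitude from Lemma~\ref{lem orders magn}. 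You also misattribute the role of these propositions: they are not verifications that the output lies in $\AKsm(\CC)$ (that is automatic from the setup in \S\ref{subsec AK}); they are the estimates that rule out $CR$ on the limit. Your step~(ii), identifying $\ker S_q$ for periodic models, is a correct and relevant observation that $\CC$ satisfies the hypothesis of Theorem~\ref{thm inst AK}, but that theorem alone does not give the corollary.
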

The reason why such constructions fail to produce $CR$ objects is that fast approximation
is, to a certain extent, incompatible with Cohomological Rigidity.
The precise statements provided in the proof cover with some
margin the known constructions, and the proof is structured so that all assumptions are
explicitely stated and introduced when they become relevant in the argument.
We hope that treating cases where the estimates differ slightly from our assumptions
will be facilitated this way.

Corollary \ref{cor no counter-ex} and its proof seem to indicate that the conjecture
\ref{conj CR} is true. Even though it is not known, as L. Flaminio pointed out to us,
whether Cohomological Rigidity implies the vanishing of all Lyapunov Exponents, the
failure of the most powerful method in elliptic dynamics to produce counter-examples
(unless a new arsenal of examples, allowing considerably more efficient Anosov-Katok constructions,
is discovered) suggests quite strongly that the conjecture be true.

In particular, cor. \ref{cor no counter-ex} and its proof allow us to verify the conjecture in the following setting.
\begin{thmx} \label{thm no counter ex in KAM informal}
Given $P$ a homogeneous space of compact type and $\a $ Diophantine rotation, there exists
an open set of cocycles in $SW_{\a}^{\infty}(\T ^{d} ,P)$ where $DUE \setminus CR$ is
generic but no Cohomologically Rigid cocycles exist.
\end{thmx}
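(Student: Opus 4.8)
The plan is to take for the open set a $C^{\infty}$-neighbourhood $\UU$ of the family of constant cocycles $\CC_0 := \{(\a, A_0) \colon A_0 \in G\} \subset SW_{\a}^{\infty}(\T ^{d}, P)$ over the fixed Diophantine frequency $\a$, that is, the local almost reducibility regime. First I would invoke local almost reducibility in the $C^{\infty}$ category for Diophantine frequencies (the compactness of $G$ and the arithmetic condition on $\a$ being exactly what the KAM scheme uses; cf. \cite{NKPhD}): after shrinking $\UU$, every $f \in \UU$ is almost reducible, its conjugacy class accumulating on $\CC_0$. Because the conjugations produced by the scheme come with super-fast decay of the successive corrections, one checks that $f$ is in fact a $C^{\infty}$-limit of conjugates of constant cocycles; hence $\UU \subseteq \AKsm(\CC_0)$ and $\UU$ is a non-empty open subset of the Baire space $\AKsm(\CC_0)$.

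Second, I would verify the hypothesis of Theorem~\ref{thm inst AK} for $\CC = \CC_0$ with $\s = 0$. Expanding $C^{0}_{\mu}(M) = C^{0}_{\mu}(\T^{d} \times G/H)$ into Fourier modes on $\T^{d}$ and isotypic (Peter--Weyl) components on $G/H$, a constant cocycle $(\a, A_0)$ acts on the block indexed by $(k, \pi)$, with $\pi$ a class-one irreducible representation of $G$, as multiplication by $e^{2\pi i \langle k, \a\rangle}\,\pi(A_0)$. Since $P$ is of compact type, a maximal torus $\TT$ of $G$ does not act transitively on $P$, so $P$ carries non-constant $\TT$-invariant functions; any such function lies in the span of the zero-weight spaces, hence $L^{2}(G/H)$ contains a non-trivial class-one irreducible $\pi_0$ admitting a zero weight, and therefore $\pi_0(A_0) - \Id$ is singular for \emph{every} $A_0 \in G$. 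Pairing the $(0, \pi_0)$-component of a function with a unit vector of $\ker(\pi_0(A_0)^{-1} - \Id)$ yields a non-zero $(\a, A_0)$-invariant distribution $\DD_{A_0}$ of order $0$, so $Cob^{\infty}((\a, A_0)) \subseteq \ker \DD_{A_0}$, a proper closed hyperplane of $C^{0}_{\mu}(M)$, uniformly in $A_0$. Theorem~\ref{thm inst AK} then gives that a generic diffeomorphism of $\AKsm(\CC_0)$ — hence, $\UU$ being open therein, a generic $f \in \UU$ — is not Cohomologically Rigid; in particular $CR$ is meagre in $\UU$. (Alternatively this can be read off from Theorem~\ref{thm pert DUE}, whose technical hypothesis is satisfied precisely by this uniform order-zero obstruction.)

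Third, genericity of $DUE$ in $\UU$ follows from the Anosov--Katok construction in the fibre $P$, exactly as in \cite{AFKo2015} and \cite{NKInvDist}: periodic approximation inside the compact homogeneous fibre produces a dense $G_{\d}$ of $DUE$ cocycles in every $C^{\infty}$-ball centred at a constant, and the Diophantine nature of $\a$ plays no role here. Combined with the meagreness of $CR$, this makes $DUE \setminus CR$ generic in $\UU$.

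It remains to upgrade ``generic $f \in \UU$ is not $CR$'' to ``\emph{no} $f \in \UU$ is $CR$'', which I expect to be the main obstacle, and I would handle it by a dichotomy. If $f \in \UU$ is reducible, say $f = \Phi^{-1} \circ (\a, A_0) \circ \Phi$ with $\Phi$ the fibred conjugation attached to some $B \in C^{\infty}(\T^{d}, G)$, then $\f \mapsto \DD_{A_0}(\f \circ \Phi^{-1})$ is a non-zero, finite-order invariant distribution for $f$, so $Cob^{\infty}(f)$ is not dense and $f \notin CR$. If $f \in \UU$ is not reducible, then no sequence of conjugacies $B_n$ realising almost reducibility can be bounded in $C^{\infty}$ (a bounded sequence would have a $C^{\infty}$-convergent subsequence, whose limit would reduce $f$), so $\|B_n\|_{C^{r}} \to \infty$ along a subsequence for some $r$. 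One must then check that the resulting rate of approximation of $f$ by conjugates of constants meets the speed requirements of the precise statement Theorem~\ref{thm inst AK precise} — this is also where Proposition~\ref{example prop comp gr} enters, certifying that the compact-fibre scheme does reach those rates — and apply that precise version to $f$ itself rather than to a generic point, producing an explicit $\f_f \in C^{\infty}_{\mu}(M)$ with no $C^{\infty}$ solution of~\eqref{lin cohom eq}, whence $f \notin CR$. The delicate and decisive point is exactly this lower bound on $\|B_n\|$ forced by non-reducibility and its compatibility with the approximation speed demanded by Theorem~\ref{thm inst AK precise}; once it is secured, the two cases exhaust $\UU$ and the theorem follows.
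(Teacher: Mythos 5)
Your outline gets the global architecture right: take the open set to be the local almost reducibility regime around constant cocycles, establish the codimension-$\geq 1$ obstruction for constant cocycles via Peter--Weyl, invoke Theorem~\ref{thm inst AK} to get meagreness of $CR$ and the references for genericity of $DUE$, and then attempt a dichotomy to upgrade from \emph{generic} to \emph{no} $CR$ cocycles. The codimension argument via zero-weight vectors in a class-one representation is a valid way to verify the hypothesis of Theorem~\ref{thm inst AK} for $\CC_0$, and is in the same spirit as the paper's standing assumption~\ref{ass inf dim ob} on the base class.

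The decisive gap is in your dichotomy, and you have correctly identified exactly where it lies but left it open rather than closed. You split on ``reducible'' vs.\ ``not reducible'', and in the non-reducible branch you need to show that divergence of the conjugations forces a rate of approximation meeting the hypotheses of Proposition~\ref{prop approx} (super-polynomial decay of $\d_{\s,n}$ with respect to $\l_{N_n}$). But divergence of $\|B_n\|$ alone gives you nothing quantitative: there is a priori a ``no man's land'' of non-reducible cocycles whose approximation by the resonant constants is only polynomial, where neither branch of your argument concludes. The paper's dichotomy is instead on the \emph{rate} itself, not on reducibility: either $d(A_{n_k},\L_k) = O(N_{n_k}^{-\infty})$, in which case Proposition~\ref{example prop rot vec 1} applies directly and excludes $CR$; or $d(A_{n_k},\L_k)\geq \gamma' N_{n_k}^{-\t'}$, in which case the differentiable rigidity theorem of~\cite{NKRigidity} forces the cocycle to be \emph{reducible}, hence to carry an invariant torus foliation, hence to fail $DUE$ and $CR$. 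That rigidity theorem --- polynomial non-degeneracy of resonances implies reducibility --- is precisely what empties the middle ground between your two cases, and it is a substantive input that cannot be ``checked'' a posteriori from soft compactness. Without it, your non-reducible branch does not close. (A secondary, more minor inaccuracy: you invoke Theorem~\ref{thm inst AK precise} ``applied to $f$ itself'', but that theorem is a genericity statement and produces no explicit non-coboundary for a fixed $f$; the pointwise conclusion comes from Proposition~\ref{prop approx} or Proposition~\ref{example prop rot vec 1}, which is what the paper uses.)
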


Genericity of $DUE$ is of course provided by \cite{AFKo2015} and \cite{NKInvDist},
and we also prove inexistence of $CR$ cocycles. The theorem is made more precise
in \S \ref{sec AR} by thm \ref{thm no counter ex in KAM regime}.

Combining the above theorem with the so-called renormalization scheme, \cite{Krik2001} and
\cite{NKPhD}, we obtain the following corollary, valid for cocycles in $SW_{\a}^{\infty}(\T ,P)$ ($d=1$) and whose rotation satisfies a Recurrent Diophantine Condition.
\begin{corx} \label{cor no counter-ex in SW}
Given $P$ a homogeneous space of compact type and $\a $ Recurrent Diophantine rotation,
$DUE \setminus CR$ is generic in $SW_{\a}^{\infty}(\T  ,P)$,
but no Cohomologically Rigid cocycles exist.
\end{corx}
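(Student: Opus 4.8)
The plan is to deduce Corollary~\ref{cor no counter-ex in SW} from Theorem~\ref{thm no counter ex in KAM regime} (the precise form of Theorem~\ref{thm no counter ex in KAM informal}) by means of the renormalization scheme of \cite{Krik2001} and \cite{NKPhD}, which trades an arbitrary cocycle over a Recurrent Diophantine $\a$ for one sitting in the almost reducibility regime over a Diophantine rotation. The two properties in play, $DUE$ and $CR$, are defined purely through $\sm$-solvability of the linear cohomological equation, so they are invariant under fibered conjugacy; the first thing I would check is that they are in fact invariant under renormalization, since the renormalization operator $\RR$ is, up to a fibered conjugacy, the base change dictated by the continued fraction algorithm of $\a$ together with the rescaling that turns the induced dynamics into a genuine cocycle, and each of these ingredients carries the cohomological equation over a cocycle to an equivalent one. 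Concretely, conjugacy transports $Cob^{\infty}$ by a tame linear isomorphism; the base change replaces the cocycle by a suitable product of its iterates over a renormalized torus, and one has to verify that a zero-mean $\sm$ function is a coboundary for the new cocycle exactly when the corresponding function is a coboundary for the old one, the codimension-one exceptional subspace being transported along. Granting this, ``$f\in CR$'' and ``$f\in DUE$'' are blind to $\RR$.

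Next I would invoke the content of the renormalization scheme for $d=1$: when $\a$ is Recurrent Diophantine, for every $(\a,A(\.))\in SW^{\infty}_{\a}(\T,P)$ there is an integer $n=n(A)$, which one may take among the recurrence times of $\a$ under the Gauss map, such that after a suitable fibered conjugacy $\RR^{n}(\a,A(\.))$ lies in a fixed neighbourhood $\UU_{AR}$ of the constant cocycles over the Diophantine frequency $\a_{n}$ --- the almost reducibility regime, to which Theorem~\ref{thm no counter ex in KAM regime} applies (uniformity of $\UU_{AR}$ in $n$ coming from the uniform Diophantine constants along recurrence times). Moreover $\RR^{n}$ is, near each such cocycle, a continuous open (tame) map into $SW^{\infty}_{\a_{n}}(\T,P)$. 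From this the non-existence of $CR$ cocycles is immediate: given $f\in SW^{\infty}_{\a}(\T,P)$, choose $n$ with $\RR^{n}f$ (after conjugacy) in $\UU_{AR}$; Theorem~\ref{thm no counter ex in KAM regime} says this cocycle is not $CR$, and since $\RR$ preserves $CR$, neither is $f$.

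For the genericity statement I would argue by Baire category on the open cover $\{\mathcal{W}_{n}\}_{n\ge 1}$ of $SW^{\infty}_{\a}(\T,P)$, where $\mathcal{W}_{n}=(\RR^{n})^{-1}(\UU_{AR})$ --- open because $\RR^{n}$ is continuous, and covering the whole space by the previous paragraph. On $\mathcal{W}_{n}$, Theorem~\ref{thm no counter ex in KAM regime} provides a residual subset of $\UU_{AR}$ consisting of $DUE\setminus CR$ cocycles; its preimage under the continuous open map $\RR^{n}|_{\mathcal{W}_{n}}$ is residual in $\mathcal{W}_{n}$, and since $\RR$ preserves both $DUE$ and $CR$ it again consists of $DUE\setminus CR$ cocycles. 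A set that is residual in each member of an open cover of a Baire space is residual in the whole space, so the union of these preimages exhibits $DUE\setminus CR$ as residual in $SW^{\infty}_{\a}(\T,P)$, which together with the previous paragraph gives the corollary.

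The hard part is the very first step: making precise and verifying that renormalization sends $\sm$-coboundaries to $\sm$-coboundaries with the codimension-one structure respected, so that the $CR$/$DUE$ dichotomy is genuinely an invariant of $\RR$. This asks for unwinding the definition of $\RR$ --- the explicit conjugating map, the renormalized frequency and torus coming from the Gauss map, and the time/space rescaling that turns the return dynamics of the $\Z^{2}$-action on $\R\times P$ generated by (the lift of) $(\a,A(\.))$ and the translation $(x,p)\mapsto(x+1,p)$ into a bona fide cocycle --- and checking at each stage that the induced operation on $\smm$ is a continuous linear bijection preserving zero mean and the exceptional hyperplane; the delicate point is the passage to the induced map, where the cohomological equation over a subset relates to the original one through an inducing (transfer) correspondence that must be shown to preserve $\sm$ solvability in both directions. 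The remaining ingredients --- openness of $\RR^{n}$ on the pieces $\mathcal{W}_{n}$ and that the $\mathcal{W}_{n}$ cover the space --- are part of the renormalization scheme of \cite{Krik2001} and \cite{NKPhD}, but must be stated in the form used above for the category argument to run.
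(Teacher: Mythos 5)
Your strategy --- pull the statement back to the almost reducibility regime through renormalization, check that $DUE$ and $CR$ are renormalization invariants, then apply Theorem~\ref{thm no counter ex in KAM regime} plus Baire category --- is close in spirit to the paper's, but it contains a genuine gap that the paper's proof is precisely structured to avoid. The claim that for \emph{every} $(\a, A(\cdot)) \in SW_\a^\infty(\T, P)$ some renormalization $\RR^{n}$ lands in a fixed neighbourhood $\UU_{AR}$ of the constant cocycles is false. The renormalization scheme of \cite{Krik2001} and \cite{NKPhD} (Theorem 1.3 of the latter, which is what the paper cites) yields a \emph{dichotomy}: over a Recurrent Diophantine $\a$, renormalization drives a cocycle either towards constants --- the almost reducibility regime --- or towards the periodic geodesics $(\a, E_r(\cdot))$ of the group $G$; the second alternative occurs for cocycles of nonzero degree, and such a cocycle never approaches a constant no matter how many times one renormalizes. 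Your putative open cover $\{\mathcal{W}_n\}$ therefore fails to cover $SW_\a^\infty(\T, P)$, your Baire argument does not run, and the non-existence statement is simply unaddressed on the geodesic class.

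The paper handles the two sides of the dichotomy separately, which is also why it does not need the invariance of $DUE$ and $CR$ under the renormalization base change that you correctly flag as delicate. In the almost reducibility branch one applies Theorem~\ref{thm no counter ex in KAM regime}, as you do. In the geodesic branch the paper observes that the parabolic model $(x,y) \mapsto (x+\a, y+rx)$ underlying periodic geodesics preserves a stock of distributions which does not recede to infinity unless one allows $r \to \infty$ --- impossible within the closure of a single conjugacy class --- so Lemma~\ref{lem approx dist comp} applies and such cocycles are not $DUE$, a fortiori not $CR$. Combined with the fact (Theorem 1.3 of \cite{NKPhD}) that these two classes fill $SW_\a^\infty(\T,P)$ when $\a \in RDC$, this gives both the non-existence of $CR$ cocycles and the genericity of $DUE\setminus CR$ (the geodesic class being meagre, genericity is inherited from the almost reducibility part). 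You need to add this second branch, or else show --- contrary to fact --- that the geodesic class is empty.
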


Recent advances in non-standard K.A.M. techniques (cf. \cite{AFK2011}) suggest
that the arithmetic condition can be relaxed to a classical Diophantine one.
The corollary would then hold true in
\begeq
SW^{\infty}(\T  ,P) = \bigcup _{\a \in \T} SW_{\a}^{\infty}(\T ,P)
\endeq

The results obtained herein make the following conjecture, merely a weak form of conjecture
\ref{conj CR}, seem accessible by building on the techniques developed in the present
article.
\begin{conjecture}
Let $Per_{\mu }(M) \subset \Diffm$ be the class of periodic diffeomorphisms of $M$. Then,
\begeq
\overline{Per_{\mu }(M)}^{cl_{\infty}} \bigcap CR(M) \neq \emptyset \implies M \approx \T ^{d}
\endeq
\end{conjecture}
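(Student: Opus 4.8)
The plan is to deduce the statement from Theorem~\ref{thm inst AK} together with the sharper, existence-type arguments behind Theorem~\ref{thm no counter ex in KAM regime} and Corollary~\ref{cor no counter-ex in SW}. The soft half is immediate: if $g \in Per_{\mu}(M)$ has period $q \geq 2$, then $Cob^{\infty}(g)$ is exactly the kernel of the bounded orbit-averaging projection $\varphi \mapsto \frac{1}{q}\sum_{j=0}^{q-1} \varphi \circ g^{j}$, hence a closed subspace of $C^{\sigma}_{\mu}(M)$ of infinite codimension for every $\sigma$; in particular $\overline{Cob^{\infty}(g)}^{cl_{\sigma}} \subsetneq C^{\sigma}_{\mu}(M)$ with, say, $\sigma = 0$. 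The class $\CC = Per_{\mu}(M) \setminus \{\Id\}$ is invariant under smooth conjugacy, so $\AKsm(\CC) = \overline{Per_{\mu}(M)}^{cl_{\infty}}$ up to the point $\Id$, which is never $CR$; thus $\overline{Per_{\mu}(M)}^{cl_{\infty}} \cap CR(M) = \AKsm(\CC) \cap CR(M)$, and Theorem~\ref{thm inst AK} already shows this set is meagre in $\AKsm(\CC)$. This is the exact analogue, for periodic approximation on an arbitrary $M$, of Theorem~\ref{thm pert DUE}; the content of the conjecture is the upgrade from meagre to empty, under the hypothesis $M \not\approx \T^{d}$.

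For that upgrade I would argue by contradiction, along the lines of \S\ref{section proof cor}. Assume $f \in CR(M) \cap \overline{Per_{\mu}(M)}^{cl_{\infty}}$ and write $f = \lim_{n} f_{n}$ in $C^{\infty}$, with $f_{n}$ periodic of period $q_{n}$. Since $f$ is $CR$ it is $DUE$, hence uniquely ergodic with respect to the fully supported volume $\mu$, hence minimal; so $f$ has no periodic orbit, which forces $q_{n} \to \infty$ and already restricts the topology of $M$ (classically $\chi(M) = 0$ is necessary for minimality, and more). For each $n$, using the abundance of $f_{n}$-invariant functions furnished by orbit-averaging, choose a distribution $T_{n}$ that is $f_{n}$-invariant, annihilates the constants, and is suitably normalized in a fixed negative Sobolev norm; let $s_{n}$ be its order. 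If one knew the \emph{uniform} bound $s_{n} \leq \sigma$ for all $n$, then a weak-$*$ subsequence of $(T_{n})$ would converge in $H^{-\sigma}(M)$, the limit $T$ would be $f$-invariant (because $f_{n} \to f$ in $C^{\infty}$), and, with the normalization chosen to keep $(T_{n})$ away from the line $\R\mu$, one would have $T \notin \R\mu$; thus $f$ would carry a nontrivial invariant distribution, contradicting $f \in DUE(M)$.

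The hard part, and the reason the statement is stated as a conjecture, is precisely the uniform order bound $s_{n} \leq \sigma$ --- equivalently, that the invariant distributions carried by periodic diffeomorphisms which $C^{\infty}$-approximate a minimal map cannot all escape to infinite order. Proving it seems to require an ``Anosov--Katok normal form'': one wants that such $f_{n}$ are, up to smooth conjugacy, governed by a fixed smooth action of a compact abelian group together with fast-approximation data, so that the invariant distributions attached to the non-free, or topologically nontrivial, part of that action have order controlled by the (finite, $n$-independent) regularity of the action rather than by $q_{n}$. When $M \approx \T^{d}$ the approximants may be taken to be genuine translations, whose invariant distributions at period $q$ live on Fourier modes of size $q$ and hence disappear in $H^{-\sigma}$ as $q \to \infty$ --- this is exactly why Diophantine translations are permitted to be $CR$ --- whereas for every other $M$ one expects no such ``asymptotically toral'' structure to exist, so that a bounded-order invariant current, coming from the topology of the approximating foliation, must persist; the model computations in propositions~\ref{example prop rot vec 1}, \ref{example prop rot vec 2}, \ref{example prop rot vec 2 const} and \ref{example prop comp gr} are what such an analysis should generalize. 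Establishing this dichotomy --- either the approximation scheme degenerates to a torus action, whence $M \approx \T^{d}$, or it produces a uniformly-bounded-order invariant distribution surviving to the limit --- is the missing ingredient, and is the same strengthening, from Baire category to existence, as the one separating Theorem~\ref{thm pert DUE} from Corollary~\ref{cor no counter-ex in SW}.
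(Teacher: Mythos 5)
The statement you are asked about is an \emph{open conjecture} in the paper; the author explicitly introduces it as ``merely a weak form of conjecture~\ref{conj CR}'' that the developed techniques ``make\ldots seem accessible,'' and provides no proof. You correctly read it this way, and your soft reduction is consistent with the paper's machinery: taking $\CC = Per_{\mu}(M)\setminus\{\Id\}$, noting that for periodic $g\neq \Id$ the coboundary space $Cob^{\infty}(g)$ is the kernel of the orbit-averaging projection and hence closed of infinite codimension (so $g\in\dot{\HH}^{-0}(M)$), Theorem~\ref{thm inst AK precise} does give that $CR(M)\cap\overline{Per_{\mu}(M)}^{cl_{\infty}}$ is meagre. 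The content of the conjecture is, as you say, the upgrade from meagre to empty under $M\not\approx\T^{d}$, and you correctly flag that this is not proved.

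However, your diagnosis of the missing ingredient is not quite right, and this matters. You locate the gap in a putative uniform bound on the Sobolev \emph{order} $s_n$ of the $f_n$-invariant distributions $T_n$. But for periodic $f_n$ that bound is trivial: a periodic diffeomorphism of period $q_n\geq 2$ always preserves non-constant $L^{2}$ functions (pullbacks from the quotient orbifold $M/f_n$), which are $f_n$-invariant distributions of order $0$ annihilating constants. So $s_n\leq 0$ always holds, yet the conjecture is unproved, and indeed for $M=\T^{d}$ the Diophantine rotations \emph{are} $CR$ while being approximated by periodic rotations carrying order-$0$ invariant distributions. The failure in your sketch is at the step ``a weak-$*$ subsequence would converge\ldots and the limit $T\notin\R\mu$'': a bounded sequence in $H^{-\sigma}$ does have a weak-$*$ convergent subsequence, but the limit can (and on $\T^{d}$ must) be $0$ even after normalization, because the invariant distributions of the approximants escape to arbitrarily high \emph{frequency} as $q_n\to\infty$. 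The quantity that must be controlled is the spectral location, not the Sobolev order; this is exactly what the paper's Lemma~\ref{lem approx dist comp} (which requires pre-compactness of $(u_n)$ in $H^{-\sigma}$, not merely boundedness) and Corollary~\ref{cor div Nn} (if $f\in DUE$ then $N_n\to\infty$) isolate. The conjectural dichotomy the paper is gesturing at is whether, for $M\not\approx\T^{d}$, some normalized sequence of invariant distributions of the periodic approximants can be kept from receding to infinity in frequency, not in order.

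Two smaller remarks. The parenthetical ``classically $\chi(M)=0$ is necessary for minimality'' is a theorem for \emph{flows} (Poincar\'{e}--Hopf); for diffeomorphisms the relevant obstruction is the Lefschetz number of the homotopy class, and $\chi(M)\neq 0$ alone does not exclude minimal diffeomorphisms. Also, taking $\CC = Per_{\mu}(M)\setminus\{\Id\}$ is in formal tension with Assumption~2.1 of~\S\ref{subsec AK} (which demands a section of the conjugation action on $\CC$); this is harmless here but worth noting if you intend to invoke the $\AKsm$ formalism precisely.
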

A diffeomorphism $h \in \Diffm$ is periodic iff there exists $q \in \N ^{*}$ such that
$h^{q} \equiv \Id$.

This conjecture is obviously conjecture \ref{conj CR} restricted in the quasi-periodic
setting, which is precisely the latter's natural habitat. The most interesting case would
be that conjecture \ref{conj CR} be false, while its restriction in the quasi-periodic
setting be true. If this were so, we would be missing an important class of examples in
the theory of Dynamical Systems.

We close this section by remarking that in the statement of the conjecture here above there
is no reference to the topology of the manifold $M$. In some sense, this is in contrast
with the proof of its validity for $3$-dimensional flows (\cite{Koc09},\cite{Forn08},
\cite{Mats09}), where a case-by-case argument with respect to topology is used. In our
restriction of the conjecture, assumptions on the topology of $M$ are in fact built in the
statement, for, unless the topology of $M$ allows for interesting dynamics in
$\overline{Per_{\mu }(M)}^{cl_{\infty}}$, the rectricted conjecture is practically void.

\subsection{The Anosov-Katok method} \label{subsec AK}

A general description of the Anosov-Katok method (see \cite{AnKat1970}, \cite{FayadKatok2004}, \cite{NKInvDist})
for constructing realizations of wild dynamical
behaviours is the following. One defines a class of diffeomorphisms $\CC $, a subset
of $\Diffm$, each of whose members preserves	 a rich structure (invariant manifolds, measures,
distributions) or even the class of periodic diffeomorphisms, and whose dynamics are
quite explicit. Moreover, for the construction to produce something non trivial, one
needs that
\begeq
\CC \subsetneq \overline{\CC}
\endeq
and that diffeomorphisms in $\overline{\CC} \setminus \CC$ preserve less structure than
those in $\CC$. This depends, naturally, very heavily on the specifics of each
construction. 

One then considers the conjugacy class $\TT $ of such diffeomorphisms (where
conjugacy is in the right regularity class, usually $\sm $, and of the correct type, i.e.
volume preserving, fibered, etc.):
\begeq
\TT = \{
g \circ \tilde{f}\circ g^{-1} , \tilde{f } \in \CC , g \in \Diff (M)
\}
\endeq
So as to avoid redundancies, we assume that only one representative of each class in $\TT$
exists in $\CC$.
\begin{assumption}
If $\CC \subset \Diffm$ is a class of diffeomorphisms upon which an Anosov-Katok
method is constructed, then, for $f\in \CC $ and $g \in \Diff (M)$,
\begeq
g \circ f \circ g ^{-1} \in \CC \iff  g \circ f \circ g ^{-1} = f
\endeq
\end{assumption}

Naturally, we do not want our class $\CC$ to be closed since this assumption would remove
flexibility from the construction. It is advisable, however, to have some weaker
hypothesis of the same nature, concerning conjugacy classes. The following assumption
is a kind of precompactness for the embedding $\CC \hra \TT $.
\begin{assumption}
If $\CC \subset \Diffm$ is a class of diffeomorphisms upon which an Anosov-Katok
method is constructed, and a sequence $\tilde{f}_{n} \in \CC$ converges to $f \in \TT $,
then $f \in \CC$.
\end{assumption}

Subsequently, one looks
for realizations of the sought after behaviour in
\begeq
\AKsm (\CC ) = \overline{\TT}^{cl _{\infty}}
\endeq

To this end, a sequence $\tilde{f}_{n} \in \CC$ and
a sequence of conjugations $H_{n}$ are constructed so that the representatives
$f_{n} = H_{n} \circ \tilde{f}_{n} \circ H_{n}^{-1} \in \TT $ satisfy
\begeq
f_{n} \ra f \in \AKsm (\CC ) \setminus \TT
\endeq
The conjugations $H_{n}$ are constructed iteratively,
\begeq
H_{n} = H_{n-1}\circ h_{n}  \text{ and } H_{0} = \Id
\endeq
and $h_{n} $ is chosen so that
\begin{equation} \label{eq constr conj AK}
h_{n} \circ \tilde{f}_{n-1} = \tilde{f}_{n-1} \circ h_{n}
\end{equation}
The representative at the next step of the construction is then defined by
\begeq
f_{n} = H_{n} \circ \tilde{f}_{n} \circ H_{n}^{-1}
\endeq
where $\tilde{f}_{n}$ has to be very close to $\tilde{f}_{n-1}$ so that the $f_{n}$
converge despite the divergence of $H_{n}$.

The commutation relation implies directly that
\begin{equation} \label{eq increment AK}
f_{n}\circ f_{n-1}^{-1} = H_{n} \circ \tilde{f}_{n}\circ \tilde{f}_{n-1}^{-1} \circ H_{n}^{-1}
\end{equation}

Informally, the diffeomoprhism $\tilde{f }_{n} \circ \tilde{f}_{n-1}$ is constructed in a
scale finer than the one where $\tilde{f }_{n-1} \circ \tilde{f}_{n-2}$ was constructed, and the condition
in eq. \ref{eq constr conj AK} assures that the constructions in the respective different
scales are independent.

Since omitting a finite number of steps of the construction does not change the
asymptotic properties of the limit object $f$, we immediately get the following consequence.
If realizations of a behaviour can be constructed in $\AKsm (\CC )$, then such realizations
exist arbitrarily close to the class $\CC$ in the $\sm$ topology. This is related to the concept
of Almost Reducibility, cf. \S \ref{sec AR}.

Theorem \ref{thm inst AK precise} imposes a rate of convergence of the approximant
diffeomorphisms to the limit object in order to exclude Cohomological Rigidity.
This type of fast rate of convergence is what makes in general
the above construction work, and inasmuch as such a condition has to be built into the construction, the latter
should be expected not to produce $CR$ diffeomorphisms.

\subsection{The proofs in a nutshell}

The proof of thm. \ref{thm inst AK} says that, given a class $\CC$ of diffeomorphisms
whose coboundary space has codimension at least $1$ in $\smm (M)$, the elements $f \in \AKsm (\CC )$
for which the approximation $\TT \ni f_{n} \ra f$ is fast will not be $CR$. The speed is
measured with respect to the failure to solving the cohomological equation over $f_{n}$
for functions that oscillate slowly (the low modes of a given Laplacian on $M$).
The strength of these obstructions is measured by comparing their speed of oscillation
with their distance from $Cob^{\infty}(f_{n})$.
The fast approximation condition is proved to be generic.

The rate of approximation required so that thm. \ref{thm inst AK} be true is fast, i.e.
exponential with respect to the strength of the obstructions, which makes it Liouville-like.
The proof of cor. \ref{cor no counter-ex} focuses on the $f \in \AKsm (\CC )$ for which this
rate fails, and becomes Diophantine-like. Then, under reasonable assumptions on the class
$\CC$ and on the size of admissible conjugations, the diffeomorphisms that are
approximated at a polynomial rate will still not be $CR$.

The proof of thm. \ref{thm no counter ex in KAM informal} is based on the fact that we can
identify a class $\CC $ for which the open set of cocycles of the statement is contained
in $\AKsm (\CC )$. This class is that of resonant cocycles (cf. \S \ref{sec facts Lie groups} for the
definition), the important fact being that their analysis is very efficient and that their
coboundary space is of large codimension. In \cite{NKRigidity}, we established that a sharply polynomial rate of
approximation (i.e. polynomial and not exponential) implies the existence of a smooth
invariant foliation into tori, an obvious obstruction to $DUE$. Therefore, no $CR$ examples
exist in the corresponding $\AKsm $ space.

\textbf{Acknowledgments}: This work was partly supported by a Capes/PNPD
scholarship while the author was a post-doctoral researcher at UFF, Niter\'{o}i, Brasil,
and by the ERC AdG grant no 339523 RGDD while the author was a post-doctoral researcher
at Imperial College London, and by the LABEX CEMPI (ANR-11-LABX-0007-01).

The author is grateful to A. Kocsard for his invaluable help in preparing this work and
for many discussions on the Cohomology of Dynamical Systems, and to L. Simonelli and S.
van Strien for reading early versions of the paper and giving important feedback.

\section{Definitions, notation and preliminaries}

\subsection{General notation and calculus}

By $M$ we will denote a $\sm $ compact oriented manifold without boundary, and
by $\smm (M)$ the space of smooth (complex-valued) functions $\f$
on $M$ such that $\int _{M} \f d\mu = 0$, where $\mu $ will denote a
fixed smooth probability measure equivalent to Lebesgue, i.e. a volume form.

We will denote by $\| \. \| _{C^{s}} $ the standard $C^{s}$ norms of mappings
$M \ra E$, $E$ a normed vector space,
\begeq
\| f \| _{C^{s}} = \max _{
\substack{
x \in M \\
0\leq \s \leq s
}}
\| \partial ^{\s } f \| _{L^{\infty}}
\endeq
We will use the same notation for the countable family of semi-norms or
semi-metrics defining the topology in $\smm (M)$ and $\Diffm $. These are
\begeq
d_{s} ( \psi , \psi ' ) = \max _{0 \leq \s \leq s} \| \partial ^{\s} \psi -
\partial ^{\s} \psi ' \|
_{L^{\infty}}
\endeq
for functions, and
\begeq
d_{s} ( f _{1}, f _{2} ) = \max \{ d_{s } (f_{1} \circ f_{2}^{-1} ,\Id ) , d_{s }
(f_{2} \circ f_{1}^{-1} ,\Id ) \}
\endeq
for diffeomorphisms.

We will use the inequalities concerning the composition of functions with
mappings (see \cite{NKPhD} or \cite{KrikAst}). Here, $\psi \in C ^{\infty} (E) $
and $f$, $f_{1}$ and $f_{2}$ are smooth mappings $M \ra E$, where $E$ is a normed vector
space.
\begin{eqnarray} \label{composition}
\| \psi \circ f \| _{C^{s}} &\leq & C_{s} \| \psi \| _{C^{s}} \vertiii{f}  _{s} 
\\  \label{composition 2}
\left\Vert \psi \circ f_{2}-\psi \circ f_{1}\right\Vert _{C^{s}} &\leq &
C_{s}\left\Vert \psi \right\Vert _{C^{s+1}}\vertiii{f_{1}}  _{s} \left\Vert f_{2} \circ f_{1}^{-1}\right\Vert _{C^{s}}
\end{eqnarray}
where
\begeq
\vertiii{f} _{s} = C_{s} (1+\left\Vert f\right\Vert
_{C^{0}})^{s}(1+\left\Vert f\right\Vert _{C^{s}})
\endeq
When $f_{1} \equiv \Id $ the second inequality reads simply
\begin{equation*}
\left\Vert \psi \circ f_{2}-\psi \right\Vert _{C^{s}}\leq
C_{s}\left\Vert \psi \right\Vert _{C^{s+1}}\left\Vert f_{2} \right\Vert _{C^{s}}
\end{equation*}
For mappings $M \ra M$ and functions $\psi : M \ra \C$, this inequality stays true as long as we admit an apriori
bound on $\| f_{2} \|_{C^{0}}$ (proof by fixing a system of charts such that the ball
of a fixed radius $\d >0$ around each point is contained in a chart). The constants
would then depend on the a priori bound.

If $g $ is a fixed Riemannian metric on $M$,
inducing the measure $\mu $, then we have a natural basis for the space
$\smm (M)$. The eigenfunctions of $\Delta _{g}$, the Laplace-Beltrami operator
associated to $g$, $\{ \phi _{i} \}_{i=0}^{\infty}$ are the functions satisfying
\begin{eqnarray*}
\Delta _{g} \phi _{i} &=& - \lambda ^{2}_{i} \phi _{i} \\
\langle \phi _{j} (\. ) , \phi _{i} (\. ) \rangle _{L^{2}(\mu)} &=& \delta _{ij} \\
0 = \lambda _{0} &<& \lambda _{1} \leq \lambda _{2} \leq \cdots \leq \lambda _{i} \leq \cdots <\infty \\
\l _{i} &\ra & \infty
\end{eqnarray*}
Moreover, the sum
\begeq
\psi (\. ) = \sum _{i=1}^{\infty}\hat{ \psi} _{i} \phi _{i}(\. )
\endeq
defines a $\sm $-smooth function if, and only if,
\begeq
\hat{ \psi} _{i} = O( \l  _{i}^{-\infty})
\endeq
and every $\sm $ function admits such a representation which is unique, with
\begeq
\hat{ \psi} _{i} = \langle \psi (\. ) , \phi _{i} (\. ) \rangle _{L^{2}(\mu)}
\endeq
The functions $\phi _{i}$ satisfy the following estimate on the growth of
derivatives
\begeq
\| \phi _{i} \| _{C^{s}} \leq C _{s} \l _{i}^{s + d/2}
\endeq
see \cite{PalaisAtiyahIndThm} or \cite{Kuksin2000AnHamPDES}.
We also define the Sobolev spaces $H^{s} \equiv H^{s}_{g}$ for $s \in \R$,
where we drop the reference to the fixed metric $g$, by
\begeq
\{ \psi \in L^{2}(\mu ) , \sum _{i \in \N } (1 + \l _{i})^{2s} |\hat{ \psi} _{i}|^{2}
< \infty \}
\endeq
and, as usual, define the Sobolev norm in $H^{s}$ as the square root of the
sum in the definition,
\begeq
\| \psi \| _{H^{s}}^{2} =\| \psi \| _{s}^{2} = \sum _{i \in \N } (1 + \l _{i})^{2s} |\hat{ \psi} _{i}|^{2}
\endeq
and the inner product giving rise to the norm
\begeq
\langle \psi , \psi ' \rangle _{H^{s}} = \sum _{i \in \N } (1 + \l _{i})^{2s}
\hat{ \psi} _{i}\overline{\hat{ \psi} _{i}^{\prime}}
\endeq
The space $H^{-s}$ is the dual of $H^{s}$, but the only self-dual space in the
classical chain of inclusions
\begeq
\sm \equiv H^{\infty} \subset \cdots \subset H^{s_{1}} \subset \cdots \subset
H^{s_{2}} \subset \cdots
\subset H^{-\infty}  \equiv \DD '
\endeq
where $-\infty < s_{1} < s_{2} < \infty$, is $L^{2} \equiv H^{0} $. In fact, if we fix
$g$ and $\{ \phi _{i} \}$, the duality between $H^{-s}$ and $H^{s}$ is given by
\begin{eqnarray}
\langle u , \psi  \rangle _{H^{-s} , H^{s}} &=&
\langle \sum _{i \in \N } (1 + \l _{i})^{-s } \hat{u}_{i} \phi _{i } (\. ),
\sum _{i \in \N } (1 + \l _{i})^{s } \hat{ \psi} _{i} \phi _{i } (\. ) \rangle _{L^{2}} \\
&=&
\sum _{i \in \N }\hat{u}_{i} \bar{\hat{ \psi}} _{i}
\end{eqnarray}
where $u = \sum \nolimits _{i \in \N } \hat{u}_{i} \phi _{i } (\. )$ and
$\psi = \sum \nolimits _{i \in \N } \hat{ \psi}_{i} \phi _{i } (\. )$.

For $s>0$, we will need the regularisation operators $T_{N} , \dot{T}_{N }$ and $ R_{N}$ defined by
\begin{eqnarray*}
T_{N} \psi &=& \sum _{ i \leq N } \hat{ \psi}_{i} \phi _{i } (\. ) \\
\dot{T}_{N} \psi &=& \sum _{ 0 < i \leq N } \hat{ \psi}_{i} \phi _{i } (\. ) \\
R_{N} \psi &=& \sum _{  i > N }\hat{ \psi}_{i} \phi _{i } (\. )
\end{eqnarray*}
The operators $T_{N} $ and $\dot{T}_{N}$ coincide when restricted to $\smm (M)$ or
$H_{\mu }^{s} \equiv \{ \psi \in H^{s} , \int \psi d \mu  = 0 \}$. These operators satisfy the estimates
\begin{eqnarray} \label{truncation est}
\left\Vert T_{N}\psi (\. )\right\Vert _{C^{s}} &\leq
&C_{s}\l _{N}^{s+d/2} \left\Vert \psi (\. ) \right\Vert _{C^{0}} \\
\left\Vert R_{N}\psi (\. )\right\Vert _{C^{s}} &\leq &C_{s,s'} \l _{N}^{s-s^{\prime }+d} \left\Vert
\psi (\. )\right\Vert _{C^{s^{\prime }}}
\end{eqnarray}

Since we consider a fixed volume form on $M$, namely $\mu$, we will also need the homogeneous
Sobolev spaces
\begeq
\dot{H}^{s} = \{ u \in H^{s}, \hat{u}_{0} = 0 \}, s \in \R
\endeq
We will conserve the notation $\dot{H}^{s}$ for distributions, i.e. for $s<0$, and
the notation $H_{\mu }^{s}$ for functions, i.e. for $s\geq 0$, even though there is an overlap.

The introduction of the metric $g$ serves only for providing a basis of $L^{2}$ consisting of smooth functions
and whose growth of $C^{s}$ norms satisfies the above useful properties. Alternatively, it
can be interpreted as a ruler for measuring the characteristic scale at the successive steps
of the Anosov-Katok construction by comparing them with $\l _{k}^{-1}$. We henceforth
ommit any refernce to the metric $g$, and clear it from the established notation.

For Banach spaces $E,F$, we will denote by $\LL (E,F)$ the
space of continuous linear mappings $E\ra F$ and by
$\| \. \|_{\LL(E,F)}$ the operator norm: for $T \in \LL (E,F)$,
\begeq
\| T \|_{\LL(E,F)} = \sup _{u \in E \setminus \{ 0\}}
\frac{\| T u\|}{\| u\|}
\endeq
By convention, the norm will be infinite if the operator $T$
is not continuous.

\subsection{Diffeomorphisms, cocycles and cohomology}

For this section, see \cite{Koc09}.
By $\Diff (M)$ we will denote the space of $\sm $ diffeomorphisms on $M$, and
by $\Diffm$ those that preserve the measure $\mu$. If $\UU $ is a
subspace of $\smm (M)$, and $\s \in \N$, we will denote by
\begeq
\overline{\UU}^{cl_{\s}} \subset H^{\s}_{\mu } (M)
\endeq
the closure of the space $\UU $ in the $H^{\s }$ topology.

The group $\Diffm $ acts on $\smm $ by composition: for $f \in \Diffm $ and
$\f \in \smm (M)$,
\begeq
f^{*}\f = \f \circ f
\endeq

For every $\f \in \sm (M)$ and $f \in \Diff (M)$, we can define the real $\Z$ cocycle
in $M$ over $f$ by
\begeq
\Phi _{\f ,f} : (x,n)  \mapsto  \sum _{i=0}^{n-1} \f \circ f ^{i}
\endeq
A cocycle $\Phi _{\f ,f}$ is $C^{s}$-cohomologous to $\Phi _{\f ',f}$
iff there exists $\psi \in C^{s}(M)$ such that
\begeq
\Phi _{\f ,f} (x,n) = \psi \circ f^{n} - \psi +\Phi _{\f ',f} (x,n)
\endeq
This is equivalent to $\psi \circ f - \psi = \f - \f '$.

We shall say that $ \Phi _{\f ,f} $ is an $H^{s}$-\textit{coboundary} iff it is
$H^{s}$-cohomologous to the null cocycle, $\Phi _{0 ,f}$, which amounts to
$\psi \in H^{s} $ satisfying eq. \ref{lin cohom eq}.
For $f \in \Diffm $ and
$0 \leq s \leq \infty$ we will denote by $Cob^{s}(f) \subset \smm $ the space of smooth
functions which are $H ^{s}$-coboundaries:
\begeq
Cob^{s} (f) = \{ \f \in \smm (M) , \exists \psi \in H^{s}_{\mu}, \psi \circ f - \psi = \f \}
\endeq
This space is obtained by considering the coboundary operator as an operator $H^{s}_{\mu} \ra H^{s}_{\mu}$
and intersecting its image with $\smm \hra H^{s}_{\mu}$.

A first obstruction to a function $\f$ being a coboundary over
$f$ is related to distributions preserved by $f$ (see, e.g. \cite{Kat01}). These are the
distributions satisfying
\begeq
\langle f_{*}u , \psi \rangle :=
\langle u , \psi \circ f \rangle  = \langle u , \psi \rangle, \forall \psi
\in \sm (M)
\endeq
\begin{definitionn}
For every $f \in \Diff (M)$, we will denote the distributions in $H^{-s} \setminus \{ 0\}$ (resp. $\dot{H}^{-s}\setminus \{ 0\}$) that
are preserved by $f$ by $H^{-s} (f)$ (resp. $\dot{H}^{-s}(f)$). By $\DD ' (f) $ we denote the distributions in
$\DD ' \equiv H^{-\infty}$ preserved by $f$. Since $M$ is compact,
\begeq
\DD ' (f) = \{ 0 \} \cup \bigcup _{s \in \R } H^{s}(f)
\endeq
Clearly, $\R \mu \subset \DD '(f)$, for all $f \in \Diffm$.
Whenever we write $u \in  \dot{H}^{-s} (f)$, we implicitly assume that $\| u \| _{H^{-\s}} =1 $.

We also denote by $\dot{\HH }^{-s}(M)$ the diffeomorphisms in $\Diffm $ that preserve a
non-trivial distribution in $\dot{H}^{-s} (M)$:
\begeq
\dot{\HH }^{-s}(M) = \{ f \in \Diffm  , \dot{H}^{-s} (f) \neq \emptyset
\}
\endeq
\end{definitionn}

It follows immediately from the definition that any $\sm$-coboundary
$\f \in Cob^{\infty}(f)$ must satisfy
\begin{equation} \label{eq in ker of ob}
\langle u , \f  \rangle = 0 ,\, \forall u \in \DD ' (f)
\end{equation}
Under this condition on $\f$, the Hahn-Banach theorem shows that it is actually
an \textit{approximate coboundary}, i.e. that for every $\e >0 $ and $s_{0} \in \N$,
there exist $\psi , \epsilon \in \smm (M)$ satisfying
\begin{eqnarray*}
\psi \circ f - \psi &=& \f + \epsilon \\
\| \epsilon \| _{C^{s_{0}}} &<& \e
\end{eqnarray*}

However, the condition of eq. \ref{eq in ker of ob} is not sufficient for a function to be a coboundary,
and the application of the Hahn-Banach theorem gives an optimal answer in full generality.
A celebrated example is that of Liouvillean rotations, for which we refer the reader to the next section,
and especially to prop. \ref{prop DC is CF}.

The following nomenclature concerning the properties of a diffeomorphism relative to the space of its
coboundaries is more or less standard.

\begin{definitionn}
A diffeomorphism $f\in \Diffm $ is called $DUE$, standing for Distributionally Uniquely Ergodic,
iff $\dim \DD ' (f) = 1$, in which case $\DD ' (f)$ is the vector space
generated by the unique invariant probability measure $\mu$:
\begeq
f \in DUE(M) \iff \DD '(f) = \R \mu
\endeq
\end{definitionn}

The following lemma follows trivially from the definition, since $DUE $ diffeomorphisms
are uniquely ergodic with respect to $\mu$.
\begin{lemma}
A $DUE(M)$ diffeomorphism is minimal.
\end{lemma}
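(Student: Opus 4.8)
The plan is to argue by contrapositive, exploiting the equivalence between $DUE$ and unique ergodicity and the standard fact that a uniquely ergodic homeomorphism of a compact space with a fully supported invariant measure is minimal. First I would recall that $f \in DUE(M)$ means $\DD'(f) = \R\mu$, and in particular the only $f$-invariant probability measures (which are, in particular, invariant distributions of order $0$, hence lie in $\DD'(f)$) form the ray $\R_{\geq 0}\mu$; thus $\mu$ is the unique $f$-invariant Borel probability measure, i.e. $f$ is uniquely ergodic. Since $\mu$ is a volume form, it has full support: every nonempty open set $U \subset M$ satisfies $\mu(U) > 0$.

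Next I would establish minimality from unique ergodicity plus full support. Suppose, for contradiction, that $f$ is not minimal, so there is a nonempty closed $f$-invariant proper subset $K \subsetneq M$. By compactness of $K$ and continuity of $f$, the restriction $f|_K$ admits an invariant Borel probability measure $\nu$ (Krylov--Bogolyubov), which we regard as an $f$-invariant probability measure on $M$ supported on $K$. Then $\nu \neq \mu$: indeed $\mu(M \setminus K) > 0$ because $M \setminus K$ is nonempty and open and $\mu$ has full support, whereas $\nu(M\setminus K) = 0$. This contradicts unique ergodicity, so no such $K$ exists and $f$ is minimal.

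Finally I would note that the only point needing a word of care is the passage from "invariant probability measures" to "invariant distributions in $\DD'(f)$": an $f$-invariant Borel probability measure defines, via integration, a distribution $u$ of order zero with $\langle u,\psi\rangle = \langle u, \psi\circ f\rangle$ for all $\psi \in \sm(M)$, i.e. $f_*u = u$, so $u \in \DD'(f) = H^0(f) \cup \cdots$; since $M$ is compact this $u$ indeed lies in $H^{-s}(f)$ for $s$ large enough, hence in $\DD'(f)$. The hypothesis $\dim\DD'(f) = 1$ then forces $u \in \R\mu$, and normalization of total mass gives $u = \mu$. There is essentially no obstacle here; the statement is, as the text says, immediate once one unwinds the definitions, the only genuine ingredient being the classical unique-ergodicity-implies-minimality argument for measures of full support.
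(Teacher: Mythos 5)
Your argument is correct and coincides with the paper's, which dispatches the lemma in a single sentence by observing that $DUE$ diffeomorphisms are uniquely ergodic with respect to the volume form $\mu$ (hence minimal, since $\mu$ has full support). You simply spell out the standard Krylov--Bogolyubov argument that the paper leaves implicit.
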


We now introduce the following distinction with resepect to the coboundary space being
closed or not.

\begin{definitionn}
The diffeomorphism $f$ is called $CS$, Cohomologically Stable, iff the space of coboundaries
$Cob^{\infty}(f)$ is closed in the $\sm$ topology.
\begeq
f \in CS(M) \iff Cob^{\infty}(f) =  \overline{Cob^{\infty}(f)}^{cl_{\infty}}
\endeq
\end{definitionn}
\begin{definitionn}
A diffeomorphism $f\in \Diffm $ is called $CR$,
Cohomology Rigid, iff it is both $DUE$ and $CS$, i.e. iff $Cob^{\infty}(f)$ is
closed and of codimension $1$ in $\sm (M)$. We thus have
\begeq
f \in CR(M) \iff Cob^{\infty}(f) = \smm (M)
\endeq
\end{definitionn}

Finally, we recall the following lemma on the canonicality of the cohomological equation
and state an obvious, yet important in the context we are interested in, corollary.
\begin{lemma} \label{lem cohom eq ch coord}
Let $f,g,\tilde{f} \in \Diffm$ be such that $\tilde{f} = g \circ f \circ g^{-1}$. Then,
for functions $\psi ,\f , \tilde{\psi},\tilde{\f} \in \smm$,
\begeq
\psi \circ f - \psi = \f \iff \tilde{\psi} \circ \tilde{f} - \tilde{\psi} = \tilde{\f}
\endeq
where $\tilde{\psi} = \psi \circ g^{-1}$, same for $\tilde{\f}$.
\end{lemma}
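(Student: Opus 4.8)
The plan is to verify the equivalence by a direct substitution, exploiting the fact that the coboundary operator is natural with respect to conjugation. First I would observe that $\tilde f = g \circ f \circ g^{-1}$ gives, for any $n$, $\tilde f ^{n} = g \circ f^{n} \circ g^{-1}$, so in particular $\tilde f = g \circ f \circ g^{-1}$ is all we need here. Set $\tilde\psi = \psi \circ g^{-1}$ and $\tilde\f = \f \circ g^{-1}$, both of which lie in $\smm$ because $g$ preserves $\mu$ and composition with a diffeomorphism is a linear isomorphism of $\smm$.

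The key computation is then
\begin{equation*}
\tilde\psi \circ \tilde f - \tilde\psi
= (\psi \circ g^{-1}) \circ (g \circ f \circ g^{-1}) - \psi \circ g^{-1}
= \psi \circ f \circ g^{-1} - \psi \circ g^{-1}
= (\psi \circ f - \psi) \circ g^{-1}.
\end{equation*}
Hence $\tilde\psi \circ \tilde f - \tilde\psi = \tilde\f$ if and only if $(\psi \circ f - \psi)\circ g^{-1} = \f \circ g^{-1}$, and precomposing with $g$ (a bijection) this holds if and only if $\psi \circ f - \psi = \f$. This proves the forward implication; the reverse implication is obtained identically, either by running the same chain of equalities backwards or by applying the forward implication to the conjugation $f = g^{-1} \circ \tilde f \circ g$ with $\psi = \tilde\psi \circ g$, $\f = \tilde\f \circ g$.

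There is essentially no obstacle here: the only point requiring a word is that $\psi \mapsto \psi\circ g^{-1}$ maps $\smm$ into itself, which follows since $g \in \Diffm$ preserves $\mu$, so $\int_M \psi \circ g^{-1} \, d\mu = \int_M \psi \, d\mu = 0$, and it maps $\sm$ to $\sm$ because $g^{-1}$ is a smooth diffeomorphism (the quantitative version of this being inequality \eqref{composition}). The content of the lemma is thus purely formal, and its role in the sequel is as a ``change of coordinates'' principle: it lets one transport questions about $Cob^{\infty}(f)$, about invariant distributions, and about the $CR$/$DUE$/$CS$ properties from any representative of a conjugacy class to any other, which is exactly what is needed to analyze the limit objects of the Anosov–Katok construction through their approximants $f_n = H_n \circ \tilde f_n \circ H_n^{-1}$.
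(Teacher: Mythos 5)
Your proof is correct and takes exactly the approach the paper intends: the paper simply remarks ``The proof is by direct calculation,'' and your computation $\tilde\psi \circ \tilde f - \tilde\psi = (\psi\circ f - \psi)\circ g^{-1}$ together with precomposition by $g$ is precisely that calculation, including the side remark that $\psi\mapsto\psi\circ g^{-1}$ preserves $\smm$ because $g$ preserves $\mu$.
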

The proof is by direct calculation, and the following corollary is immediate.
\begin{corollary} \label{cor com ch coord}
If $g$ commutes with $f$, then
\begeq
\psi \circ f - \psi = \f \iff \tilde{\psi} \circ f - \tilde{\psi} = \tilde{\f}
\endeq
\end{corollary}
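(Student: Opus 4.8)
The plan is to prove Corollary \ref{cor com ch coord} as an immediate specialization of Lemma \ref{lem cohom eq ch coord}. First I would observe that the hypothesis ``$g$ commutes with $f$'' means precisely that $g \circ f \circ g^{-1} = f$, so that in the notation of the lemma we may take $\tilde f = f$. Applying Lemma \ref{lem cohom eq ch coord} with this choice, we obtain that for $\psi, \f \in \smm$,
\begeq
\psi \circ f - \psi = \f \iff \tilde\psi \circ f - \tilde\psi = \tilde\f,
\endeq
where $\tilde\psi = \psi \circ g^{-1}$ and $\tilde\f = \f \circ g^{-1}$. This is exactly the assertion of the corollary, so no further work is required.

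For completeness I would also spell out the direct verification, since the lemma's proof was only indicated. Suppose $\psi \circ f - \psi = \f$. Precomposing both sides with $g^{-1}$ and using that $f \circ g^{-1} = g^{-1} \circ f$ (the commutation relation, applied in the form $g^{-1} f = f g^{-1}$), we get
\begeq
\psi \circ f \circ g^{-1} - \psi \circ g^{-1} = (\psi \circ g^{-1}) \circ f - \psi \circ g^{-1} = \f \circ g^{-1},
\endeq
i.e. $\tilde\psi \circ f - \tilde\psi = \tilde\f$. The reverse implication follows by the same computation with $g$ in place of $g^{-1}$, or simply by noting that $g^{-1}$ also commutes with $f$ and replaying the argument. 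One should note that $\tilde\psi$ and $\tilde\f$ lie in $\smm$ because $g$ preserves $\mu$, hence precomposition by $g^{-1}$ preserves both smoothness and the zero-mean condition.

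There is no real obstacle here; the only point deserving a word of care is the bookkeeping of which map commutes with which, and the observation that the zero-mean normalization is respected under conjugation by a measure-preserving diffeomorphism — both of which are routine.
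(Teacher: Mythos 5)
Your proposal is correct and follows exactly the paper's route: the corollary is read off as the special case $\tilde f = f$ of Lemma \ref{lem cohom eq ch coord}, and the direct verification you include (precompose by $g^{-1}$ and use $f\circ g^{-1}=g^{-1}\circ f$) is precisely the "direct calculation" the paper alludes to for the lemma. The extra remark that $g_*\mu=\mu$ guarantees $\tilde\psi,\tilde\f\in\smm$ is a sensible and correct piece of bookkeeping that the paper leaves implicit.
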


In particular, whenever we fix a class $\CC$ serving as a basis for an Anosov-Katok
construction, the corollary will be of use, since functions in $\CC$ are chosen so that
they have many symmetries.

\subsection{Rotations in tori and arithmetics} \label{section rot tori}

A vector $\a \in \T ^{d} = \R ^{d} / \Z ^{d}$ will be called \textit{irrational} iff
\begeq
\langle q , \a \rangle \in \Z \text{ and } q \in \Z ^{d}
\Rightarrow q = 0
\endeq
The vector $\a \in \T ^{d}$ induces a minimal rotation $x \ra x +\a$ on the torus $\T ^{d}$
iff it is irrational. We can distinguish between two types of irrational vectors through
the following definitions. The justification is given just below, in prop. \ref{prop DC is CF}.

\begin{definitionn}
A vector $\a \in \T ^{d} $ is called Diophantine iff it
satisfies a Diophantine Condition of type $\tilde{\gamma} , \tilde{\t }$, for some $\tilde{\gamma} >0$
and $ \tilde{\t } > d$. Such a condition, denoted by $DC ( \tilde{\gamma} ,  \tilde{\t } )$, is defined
by
\begeq
\a \in DC ( \tilde{\gamma} ,  \tilde{\t } ) \Leftrightarrow
|\langle q , \a \rangle |_{\Z} \geq \frac{\tilde{\gamma} ^{-1}}{|q|^{ \tilde{\t }}} , \forall
q \in \Z ^{d} \setminus \{ 0 \}
\endeq
\end{definitionn}
The distance from $\Z$, $|\w |_{\Z}$, for $\w \in \R$ is defined by
\begeq
| \w | _{\Z} = \min _{p \in \Z } |\w - p|
\endeq
and the norm on $\Z ^{d}$ is the $\ell ^{1}$ norm.
It is a classical fact that Diophantine vectors form a meagre set of full Haar measure
in $\T ^{d}$.
\begin{definitionn}
Liouville vectors in $\T ^{d}$ are denoted by $\LL $ and are the irrational
vectors which do not satisfy any Diophantine Condition.
\end{definitionn}
It is an equally well established result that Liouville vectors form a residual set of $0$
measure in $\T ^{d}$.
We now recall the proof of the fact that an irrational rotation in $\a \in \T ^{d}$ is
$CR$ iff $\a \in DC$.
\begin{proposition} \label{prop DC is CF}
Let a rotation $R_{\a }: \T ^{d} \ra \T ^{d}$, $R_{\a } : x \mapsto x+\a \mod \Z ^{d}$.
Then, $R _{\a} \in CR(\T ^{d} )$ iff $\a \in DC$.
\end{proposition}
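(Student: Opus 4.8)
The plan is to reduce eq. \ref{lin cohom eq} for $f=R_{\a}$ to a countable family of scalar equations by expanding in the Fourier basis $\{e^{2\pi i\langle k,\cdot\rangle}\}_{k\in\Z^d}$, which is precisely the eigenbasis $\{\phi_i\}$ for the flat metric on $\T^d$ (with $\l_k=2\pi|k|$) and $\mu$ the Haar measure. Writing $\f=\sum_{k\neq0}\hat\f(k)e^{2\pi i\langle k,x\rangle}$ and $\psi=\sum_{k}\hat\psi(k)e^{2\pi i\langle k,x\rangle}$, the equation $\psi\circ R_{\a}-\psi=\f$ is equivalent, mode by mode, to $\hat\psi(k)\,(e^{2\pi i\langle k,\a\rangle}-1)=\hat\f(k)$ for all $k$. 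For $k=0$ this merely says $\hat\f(0)=0$, which holds since $\f\in\smm$; for $k\neq0$ it determines $\hat\psi(k)$ uniquely as soon as $e^{2\pi i\langle k,\a\rangle}\neq1$, the zero-mean normalisation fixing $\hat\psi(0)=0$. First I would dispose of the degenerate case: $CR\Rightarrow DUE\Rightarrow$ minimal, so $R_{\a}\in CR$ already forces $\a$ irrational; conversely $\a\in DC(\tilde\gamma,\tilde\t)$ forces $|\langle q,\a\rangle|_{\Z}>0$ for every $q\neq0$, hence $\a$ irrational. So for non-irrational $\a$ both sides of the claimed equivalence fail, and it remains to treat irrational $\a$, for which every divisor $e^{2\pi i\langle k,\a\rangle}-1$ with $k\neq0$ is nonzero and the candidate $\hat\psi(k)=\hat\f(k)/(e^{2\pi i\langle k,\a\rangle}-1)$ is the unique possible solution.

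The only quantitative ingredient is the elementary two-sided bound $4\,|\langle k,\a\rangle|_{\Z}\le|e^{2\pi i\langle k,\a\rangle}-1|\le2\pi\,|\langle k,\a\rangle|_{\Z}$ (from $2t\le\sin\pi t\le\pi t$ on $[0,1/2]$). For the implication $\a\in DC\Rightarrow R_{\a}\in CR$: if $\a\in DC(\tilde\gamma,\tilde\t)$, then $|\hat\psi(k)|\le\frac14\tilde\gamma\,|k|^{\tilde\t}\,|\hat\f(k)|$, and since $\f\in\smm$ gives $\hat\f(k)=O(|k|^{-\infty})$, the same holds for $\hat\psi$; hence $\psi\in\smm$ and $\f\in Cob^{\infty}(R_{\a})$. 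As $\f$ was arbitrary, $Cob^{\infty}(R_{\a})=\smm(\T^d)$, i.e. $R_{\a}\in CR(\T^d)$.

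For the converse I would argue by contraposition: assume $\a$ irrational but $\a\notin DC$ (a Liouville vector), and build $\f\in\smm$ that is not a coboundary. From $\a\notin DC(1,n)$ for each integer $n>d$ one extracts $k_n\in\Z^d\setminus\{0\}$ with $\eta_n:=|\langle k_n,\a\rangle|_{\Z}<|k_n|^{-n}$; since irrationality of $\a$ rules out $\eta_n=0$, the $|k_n|$ cannot remain bounded (a fixed $k_*$ repeated infinitely often would force $|\langle k_*,\a\rangle|_{\Z}=0$), so after passing to a subsequence $|k_n|$ is strictly increasing to $\infty$. Now set $\f=\sum_{n}\sqrt{\eta_n}\,e^{2\pi i\langle k_n,x\rangle}$: its coefficients decay faster than any power, because $\sqrt{\eta_n}\,|k_n|^m\le|k_n|^{m-n/2}\to0$, so $\f\in\smm$; but the forced primitive satisfies $|\hat\psi(k_n)|\ge\frac1{2\pi}\sqrt{\eta_n}/\eta_n=\frac1{2\pi}\eta_n^{-1/2}\to\infty$, so $\hat\psi\notin\ell^2$ and $\f$ is not even an $H^0$-coboundary, a fortiori $\f\notin Cob^{\infty}(R_{\a})$. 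Hence $R_{\a}\notin CR(\T^d)$.

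Almost everything here is routine — the Fourier characterisations of $C^{\infty}$ and of the coboundary relation, and the divisor bound. The one step needing genuine care is the converse construction: extracting resonances $k_n$ with $|k_n|\to\infty$ from the sole hypothesis that no Diophantine condition holds, and then tuning the coefficients $\sqrt{\eta_n}$ so that $\f$ stays $C^{\infty}$ while its unique formal primitive escapes $L^2$. I would also note in passing that for irrational $\a$ the rotation $R_{\a}$ is $DUE$ (its only invariant distributions are the multiples of $\mu$, since every nonzero Fourier mode is genuinely moved), so the example above simultaneously shows that $DUE$ does not imply $CR$ — exactly the gap exploited in the rest of the paper.
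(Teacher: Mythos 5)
Your proof is correct and takes essentially the same route as the paper: Fourier expansion, the small-divisor estimate for the Diophantine direction, and in the Liouville direction a lacunary series $\sum_n \sqrt{\eta_n}\,e^{2\pi i\langle k_n,\cdot\rangle}$ whose forced primitive has unbounded Fourier coefficients (the paper writes the coefficient as $(e^{2\pi i\langle q_n,\a\rangle}-1)^{1/2}$, which has the same modulus up to a constant). You also carefully handle the non-irrational case and the passage to a subsequence with $|k_n|\to\infty$, details the paper leaves implicit.
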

\begin{proof}
If $\a \in DC ( \tilde{\gamma} ,  \tilde{\t } )$ and $\f \in C^{\infty}_{\mu }$, then there is a unique
$\psi \in C^{\infty}_{\mu }$ solving
\begeq
\psi (\. + \a) - \psi (\. ) = \f (\. )
\endeq
and satisfying the estimate $\| \psi \|_{s} \leq C_{s} \gamma \| \f \|_{s+\tau}$. The first
step of the proof is application of the Fourier transform in order to obtain the equation
\begeq
\hat{\psi} (k) = \frac{1}{e^{2i\pi \langle k , \a \rangle } -1} \hat{\f} (k)
\endeq
We then estimate the norm using the definition of the Diophantine condition. The factor
$(e^{2i\pi \langle k , \a \rangle } -1)^{-1}$ is known as a \textit{small denominator}.

If $\a \in \LL $, let $q_{n} \in \Z ^{d}$ be such that
$|\langle q_{n} , \a \rangle |_{\Z^{d}} \leq |q _{n}|^{-n}$. Then, the function
\begeq
\f (\. ) = \sum _{n} (e^{2i\pi \langle q_{n} , \a \rangle } -1)^{1/2}
e^{2i\pi \langle q_{n} , \.   \rangle}
\endeq
is in $ C^{\infty}_{\mu } (\T ^{d})$, but the solution is not defined in any function or distribution space,
since the modulus of its $q_{n}$-nth Fourier coefficient grows faster than any power of $q_{n}$.
\end{proof}

A straightforward application of the proposition above and of the definition of Cohomological
Rigidity shows that the only $CR( \T ^{d} )$
diffeomorphisms homotopic to the $\Id $ are, up to smooth conjugation, Diophantine translations (see \cite{Koc09}
for the details).

Let us also define the Recurrent Diophantine condition. We call
$\mathrm{G}: \T \setminus \{ 0 \}\ra \T $ the Gauss map $x \mapsto \{x ^{-1} \}$, where
$\{ \. \}$ denotes the fractional part of a real number. For $\a \in \T \setminus \Q$, call
$\a _{n} = \mathrm{G}^{n} (\a )$.
\begin{definitionn} \label{def RDC}
A rotation $\a \in \T$ satisfies a Recurrent Diophantine condition of type $\tilde{\gamma} , \tilde{\t }$ iff
$\a _{n} \in DC (\tilde{\gamma} , \tilde{\t } )$ for infinitely many $n \in \N$.
\end{definitionn}

It is a full Haar measure condition for every $\tilde{\gamma} >0$ and $\tilde{\t } > 1$, and,
put informally, demands that when we apply the continued fractions algorithm on
$\a$, the remainders of the Euclidean division satisfy a fixed Diophantine condition
infinitely often.

\section{Fundamental lemmas}
In this section, we prove four fundamental lemmas. They
provide information on the behaviour of the coboundary
operator and its inverse under perturbation of the
diffeomorphism in three important settings.

The first and third lemmas can be seen as abstractions of
what happens when we perturb a rational rotation and look
for a solution to the cohomological equation for a rhs
function supported in the modes where the denominator is $0$ or non-$0$, respectively.
The second lemma concerns the same setting but corresponds to
perturbations of Liouville rotations and modes whose denominators
are Liouville-small.

\subsection{The effect of obstructions}

This first lemma, basic ingredient of the proof of thm. \ref{thm inst AK precise}, provides
an estimate which quantifies the following fact. Given a diffeomorphism
$f' \in \Diffm $ which preserves a distribution $u \in \dot{H}^{-s} (M)$ and a function
$\f \in H^{s}_{\mu}$, $\f \notin \ker u$, if we perturb $f' $ to $f$
in the $C^{s}$ topology and assume that
$\f \in \overline{Cob^{\infty}(f )}$,
then the estimates on the norms of the solution (or an approximate one) should
be expected to be bad. The following lemma provides a precise statement, and its proof
is to be compared with the small denominator estimate for irrational rotations.

\begin{lemma} \label{badness estimate}
Let $f \in \Diffm  $ and suppose that there exists $f' \in \Diffm $ such that
$\| f \circ (f')^{-1} \|_{C^{\s }} = \delta _{\s } >0$ is small and such that there exists
some $u \in \dot{H} ^{-\s } (f')$, $\| u \|_{H^{-\s}} =1$,
Suppose, now, that there exists an approximate solution $\psi \in C ^{\s +1}$ to the cohomological equation, i.e.
\begin{equation} \label{eq lin cohom S}
\psi \circ f - \psi = \f + \epsilon
\end{equation}
with $\f \in \smm \setminus \ker u $ and $\| \epsilon \| _{H^{\s} } = \e _{\s }$
small enough:
\begeq
|\langle u , \epsilon \rangle | \leq \frac{1}{2}|\langle u , \f \rangle |
\endeq
Then, $\psi $ satisfies the following estimate
\begeq
\| \psi \|_{C^{\s +1}} \geq C_{\s } \vertiii{f} _{\s} \dfrac{|\langle u , \f \rangle |}{\delta_{\s }}
\endeq
\end{lemma}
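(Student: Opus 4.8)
The plan is to exploit the invariance of $u$ under $f'$ to extract, from the approximate cohomological equation over $f$, a lower bound on $\|\psi\|_{C^{\s+1}}$ in terms of $|\langle u,\f\rangle|$ and $\delta_\s$. The starting point is to pair the distribution $u$ with both sides of \eqref{eq lin cohom S}: since $u\in\dot H^{-\s}$ and $\psi\circ f-\psi\in H^{\s}$, the pairing $\langle u,\psi\circ f-\psi\rangle$ is well defined, and we get
\begin{equation*}
\langle u,\psi\circ f\rangle-\langle u,\psi\rangle=\langle u,\f\rangle+\langle u,\epsilon\rangle.
\end{equation*}
If $f$ were equal to $f'$, invariance of $u$ would force the left-hand side to vanish, and we would be done immediately; the whole point is to control the defect coming from $f\neq f'$.

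First I would write
\begin{equation*}
\langle u,\psi\circ f\rangle-\langle u,\psi\rangle
=\bigl(\langle u,\psi\circ f\rangle-\langle u,\psi\circ f'\rangle\bigr)+\bigl(\langle u,\psi\circ f'\rangle-\langle u,\psi\rangle\bigr),
\end{equation*}
and observe that the second bracket is $\langle f'_*u-u,\psi\rangle=0$ by hypothesis $u\in\dot H^{-\s}(f')$. So the left-hand side equals $\langle u,\psi\circ f-\psi\circ f'\rangle$, and hence
\begin{equation*}
|\langle u,\psi\circ f-\psi\circ f'\rangle|=|\langle u,\f\rangle+\langle u,\epsilon\rangle|\geq |\langle u,\f\rangle|-|\langle u,\epsilon\rangle|\geq\tfrac12|\langle u,\f\rangle|,
\end{equation*}
using the smallness assumption on $\epsilon$. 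It remains to bound the left-hand side from above. Since $u\in\dot H^{-\s}$ with $\|u\|_{H^{-\s}}=1$, we have $|\langle u,\psi\circ f-\psi\circ f'\rangle|\leq\|\psi\circ f-\psi\circ f'\|_{H^{\s}}$, and then I would pass to $C^{\s}$ (at the cost of a dimensional constant, $\|\cdot\|_{H^\s}\le C_\s\|\cdot\|_{C^\s}$ on a compact manifold) and apply the composition estimate \eqref{composition 2} with $f_1=f'$, $f_2=f$, which gives
\begin{equation*}
\|\psi\circ f-\psi\circ f'\|_{C^{\s}}\leq C_\s\|\psi\|_{C^{\s+1}}\vertiii{f'}_\s\,\|f\circ(f')^{-1}\|_{C^\s}=C_\s\|\psi\|_{C^{\s+1}}\vertiii{f'}_\s\,\delta_\s.
\end{equation*}
Combining the two inequalities yields $C_\s\|\psi\|_{C^{\s+1}}\vertiii{f'}_\s\,\delta_\s\geq\tfrac12|\langle u,\f\rangle|$, i.e. the claimed bound, after absorbing constants and replacing $\vertiii{f'}_\s$ by $\vertiii{f}_\s$ (legitimate up to a constant since $f$ and $f'$ are $C^\s$-close, or one simply states the lemma with $\vertiii{f'}_\s$; since $\delta_\s$ is assumed small, $\vertiii{f}_\s$ and $\vertiii{f'}_\s$ are comparable).

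The only genuinely delicate point is the bookkeeping of which norm lives where: $u$ must be paired against an $H^\s$ object, so one has to be sure that $\psi\circ f-\psi\circ f'$ is controlled in $H^\s$ and not merely in $C^{\s}$ — this is where the transition constant between $C^\s$ and $H^\s$ on a compact $M$ enters — and that the composition inequality \eqref{composition 2} is applied with the correct regularity loss ($\psi\in C^{\s+1}$, as assumed in the hypothesis). One also wants $\delta_\s$ small enough that the a priori $C^0$ bound needed to invoke \eqref{composition 2} for maps $M\to M$ holds; this is harmless since $f\circ(f')^{-1}$ is $C^\s$-close to $\Id$. Everything else is a direct chain of the two displayed inequalities, exactly paralleling the classical small-denominator estimate, with $\delta_\s$ playing the role of the small denominator and $|\langle u,\f\rangle|$ the role of the non-vanishing Fourier coefficient.
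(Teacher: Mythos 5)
Your proof is correct and takes essentially the same route as the paper: pair $u$ against the approximate cohomological equation, use $f'$-invariance of $u$ to reduce the left-hand side to a small composition difference, estimate by $H^{-\s}$/$H^{\s}$ duality, and finish with the composition inequality \eqref{composition 2}. The only cosmetic difference is that you write the reduced term as $\langle u,\psi\circ f-\psi\circ f'\rangle$ (taking $f_1=f'$, $f_2=f$ in \eqref{composition 2}), whereas the paper absorbs $f'$ first and works with $\langle u,\psi\circ(f\circ(f')^{-1})-\psi\rangle$ (taking $f_1=\Id$); the two differ only by the harmless replacement of $\vertiii{f'}_\s$ by a constant, which you correctly note is legitimate since $f$ and $f'$ are $C^\s$-close.
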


\begin{proof}
The proof uses the estimates for composition of mappings and the
invariance of the objects. Eq. \ref{eq lin cohom S} and the fact that $(f')^{*} u = u$
imply that
\begeq
\langle u ,
 \psi \circ ( f \circ (f') ^{-1}) -  \psi \rangle =
 \langle u , \f  + \epsilon  \rangle
\endeq
Estimation by duality, the assumed smallness of $|\langle u , \epsilon \rangle |$
and the triangle inequality imply directly that
\begeq
\| u \| _{H^{-\s }}
\| \psi \circ ( f \circ (f') ^{-1}) -  \psi \| _{H^{\s }}
\geq \frac{1}{2}| \langle u , \f \rangle |
\endeq
The estimate announced in the statement of the lemma follows from the
inequality on the composition of functions with mappings, eq.
\ref{composition} and \ref{composition 2}.
\end{proof}

\subsection{The effect of cohomological instability}

This second lemma is more qualitative in its nature. It is used in the proof of thm.
\ref{thm pert DUE}, which is consequently less precise than thm. \ref{thm inst AK precise}.

\begin{lemma} \label{badness estimate 0}
Let $f \in \Diffm$, and suppose that it is not cohomologically stable, i.e. that
\begeq
Cob^{\infty}(f)\subsetneq \overline{Cob^{\infty}(f)}^{cl_{\infty}}
\endeq
and fix $\f \in \overline{Cob^{\infty}(f)}^{cl_{\infty}} \setminus Cob^{\infty}(f)$,
an approximate but not exact coboundary over $f$.

Fix some $\d >0 $ and a $s _{0} \in \N$ big enough. Then, for every $M>0 $, there exists $\e >0 $
such that, for every $s_{1} \geq s_{0}+1$, if we call $ \d _{s_{1}} =d_{s_{1}} (f,f') <\d $, then
\begeq
\psi  \circ f' - \psi = \f
\endeq
implies that
\begeq
\begin{cases}
\| \psi \| _{s_{0}} > M \text{, or} \\
\| \psi \| _{s_{1}} > C^{-1}_{s_{1}} \e \vertiii{f} _{s_{1}} ^{-1} \d _{s_{1}} ^{-1}
\end{cases}
\endeq
In particular,
\begeq
\| \psi \| _{s_{0}+1} > \max \{ M , C^{-1}_{s_{0}} \e \vertiii{f} _{s_{0}} ^{-1} \d _{s_{0}} ^{-1} \}
\endeq
\end{lemma}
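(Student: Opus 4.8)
The plan is to couple the composition estimate \eqref{composition 2} with a soft (Arzel\`a--Ascoli) compactness step, in the spirit of lemma \ref{badness estimate} but without quantitative control at the first stage. Here and below $\|\cdot\|_{s}$ denotes the $C^{s}$-seminorm. Fix $M>0$ and put
\[
\e_{0}=\e_{0}(M):=\inf\bigl\{\,\|\theta\circ f-\theta-\f\|_{C^{s_{0}}}\ :\ \theta\in\smm(M),\ \|\theta\|_{C^{s_{0}}}\le M\,\bigr\},
\]
the smallest error with which the cohomological equation over $f$ can be approximately solved by a function of $C^{s_{0}}$-size $\le M$. First I would prove $\e_{0}>0$. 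If $\e_{0}=0$, a minimising sequence $\theta_{n}$ is bounded in $C^{s_{0}}$, hence, along a subsequence, converges in $C^{s_{0}-1}$ to some $\theta_{*}$ whose distributional derivatives up to order $s_{0}$ lie in $L^{\infty}$, so $\theta_{*}\in H^{s_{0}}_{\mu}$; letting $n\to\infty$ in $\theta_{n}\circ f-\theta_{n}-\f\to0$ gives $\theta_{*}\circ f-\theta_{*}=\f$, i.e.\ $\f\in Cob^{s_{0}}(f)$. This is where "$s_{0}\in\N$ big enough" enters: in the situation where the lemma is applied the diffeomorphism $f$ is minimal (indeed $DUE$ in the intended applications), so the continuous mean-zero solution of a cohomological equation over $f$ is unique; since $\f\notin Cob^{\infty}(f)$, such a solution for $\f$ either fails to exist or has some finite Sobolev exponent $t_{0}(\f)<\infty$, and it suffices to have taken $s_{0}>\max\{2,t_{0}(\f)\}$ to exclude $\f\in Cob^{s_{0}}(f)$ --- a contradiction. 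Then set $\e:=\e_{0}\,(2\,C_{s_{0}}\vertiii{f}_{s_{0}})^{-1}$; it depends only on $M$, $s_{0}$, $f$, $\f$, and not on $s_{1}$, $f'$ or $\psi$.

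With $\e$ fixed, take any $s_{1}\ge s_{0}+1$, any $f'\in\Diffm$ with $\d_{s_{1}}=d_{s_{1}}(f,f')<\d$, and any $\psi$ with $\psi\circ f'-\psi=\f$. If $\|\psi\|_{s_{0}}>M$ the first alternative holds; otherwise $\|\psi\|_{s_{0}}\le M$, so by the definition of $\e_{0}$
\[
\|\psi\circ f-\psi-\f\|_{C^{s_{0}}}\ge\e_{0}.
\]
Since $\psi\circ f-\psi-\f=\psi\circ f-\psi\circ f'$, inequality \eqref{composition 2} (with $f_{2}=f'$, $f_{1}=f$), together with $\|\psi\|_{C^{s_{0}+1}}\le\|\psi\|_{s_{1}}$ and $\|f'\circ f^{-1}\|_{C^{s_{0}}}\le d_{s_{0}}(f,f')\le\d_{s_{1}}$, gives
\[
\|\psi\circ f-\psi\circ f'\|_{C^{s_{0}}}\ \le\ C_{s_{0}}\,\|\psi\|_{C^{s_{0}+1}}\,\vertiii{f}_{s_{0}}\,\|f'\circ f^{-1}\|_{C^{s_{0}}}\ \le\ C_{s_{0}}\,\vertiii{f}_{s_{0}}\,\|\psi\|_{s_{1}}\,\d_{s_{1}}.
\]
Combining the two displays, $\|\psi\|_{s_{1}}\ge\e_{0}\,(C_{s_{0}}\vertiii{f}_{s_{0}})^{-1}\,\d_{s_{1}}^{-1}=2\e\,\d_{s_{1}}^{-1}>\e\,\d_{s_{1}}^{-1}\ge C_{s_{1}}^{-1}\,\e\,\vertiii{f}_{s_{1}}^{-1}\,\d_{s_{1}}^{-1}$, using $C_{s_{1}},\vertiii{f}_{s_{1}}\ge1$; this is the second alternative. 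Specialising $s_{1}=s_{0}+1$ and using $\|\psi\|_{s_{0}+1}\ge\|\psi\|_{s_{0}}$ then yields the concluding inequality.

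The hard part is the positivity of $\e_{0}$: preventing a $C^{s_{0}}$-bounded approximate solution from producing, in the compactness limit, a genuine coboundary for $\f$ of finite regularity. This is the sole non-elementary point, and it is precisely what makes "$s_{0}$ big enough" and the minimality of $f$ (available from the hypotheses of thm \ref{thm pert DUE}) indispensable, and why the resulting statement is qualitative rather than quantitative, in contrast with lemma \ref{badness estimate} and thm \ref{thm inst AK precise}. Everything downstream is the composition calculus \eqref{composition}--\eqref{composition 2} and bookkeeping, using throughout that $C_{s}$ and $\vertiii{f}_{s}$ are $\ge1$ and non-decreasing in $s$, and that $d_{s}(f,f')$ is non-decreasing in $s$.
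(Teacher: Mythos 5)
Your proof is correct and follows essentially the same route as the paper's: an Arzel\`a--Ascoli compactness step to show that bounded approximate solutions cannot have vanishingly small error, followed by the composition estimate \eqref{composition 2} to transfer the lower bound from $f$ to $f'$. You have, however, filled in the details that the paper's very terse proof elides, and in a couple of respects your version is cleaner. (i) You make precise what ``$s_{0}$ big enough'' must accomplish, namely $\f\notin Cob^{s_{0}}(f)$, and you observe that this is where uniqueness of the continuous mean-zero solution (hence minimality of $f$, available in the intended applications but not literally in the lemma's hypotheses) enters; the paper simply invokes Arzel\`a--Ascoli and asserts the existence of $s_{0}$ without saying why the compactness limit contradicts anything. (ii) You work with a single quantity $\e_{0}(M)$ tied to the fixed index $s_{0}$, and pass to arbitrary $s_{1}\ge s_{0}+1$ through monotonicity of the $C^{s}$-seminorms and of $\vertiii{\cdot}_{s}$; the paper's formulation of the compactness consequence (``$\|\psi\circ f-\psi-\f\|_{s_{1}}<\e\Rightarrow\|\psi\|_{s_{0}}>M$'') has the $s_{0}$ and $s_{1}$ indices placed inconsistently with the way it is actually used two lines later, whereas your version avoids that confusion. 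One shared imprecision: both your argument and the paper's establish only the displayed disjunction, from which the final ``$\|\psi\|_{s_{0}+1}>\max\{\ldots\}$'' does not literally follow; that inaccuracy lies in the lemma statement rather than in your proof. Finally, the constant $2$ in your threshold $s_{0}>\max\{2,t_{0}(\f)\}$ should really be $d/2$ (for the Sobolev embedding into $C^{0}$), but this is cosmetic.
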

\begin{proof}
Let $f$ and $\f$ be as in the statement of the lemma. Then, by the
Ascoli-Arzel\`{a} theorem, there exists $s_{0}$ such that if $s_{1}\geq s_{0}+1$, then for every $M>0$
there exists $\e >0$ such that if $\psi \in \smm$ and
\begeq
\| \psi \circ f - \psi - \f \|_{s_{1}} < \epsilon \Rightarrow
\| \psi \|_{s_{0}} > M
\endeq

Let us fix such $M$ and $\e$, and suppose that
$d_{s} (f,f') = \d _{s}$ is small enough for some $s \geq s_{1}$. If, now,
$\psi $ is such that $\psi \circ f' - \psi = \phi$, then
\begeq
\psi \circ f - \psi = \f + \psi \circ f - \psi \circ f'
\endeq
Now, either
\begeq
\| \psi \| _{s_{0}+1} \geq C^{-1}_{s_{0}} \e \vertiii{f} _{s_{0}} ^{-1} \d _{s_{0}} ^{-1} 
\endeq
where $C_{s_{0}} $ is the constant appearing in eq. \ref{composition 2}, and
\begeq
\| \psi \| _{s} \ra \infty \text{ as } \d _{s_{0}} \ra 0 , \forall s \geq s_{0}+1
\endeq
or
\begeq
\| \psi \circ f - \psi \circ f' \| _{s_{0}} \leq
C_{s_{0}}\| \psi \| _{s_{0}+1} \vertiii{f} _{s_{0}} \d _{s_{0}} < \e
\endeq
In the second case, by the Cohomological Instability of $f$ we obtain that
\begeq
\| \psi \| _{s_{1}} > M
\endeq
and therefore $\| \psi \| _{s_{1}} > \max \{ M ,C^{-1}_{s_{1}} \e \vertiii{f} _{s_{1}} ^{-1} \d _{s_{1}} ^{-1} \}$
\end{proof}

\subsection{A continuity estimate}

Lemma \ref{badness estimate} provides a lower bound for
the solution of the cohomological equation in the presence
of obstructions. The estimate giving a lower bound of the same
nature but by using the coboundaries of apporoximating
diffeomorphisms is an easier statement provided by the
following lemma.

The following lemma will be exploited in showing blow-up of
solutions in the proof of corollary \ref{cor no counter-ex}.
\begin{lemma} \label{badness estimate cob}
Let $f' \in \Diffm $, $\f ' \in Cob (f') $ and $\psi '$ a
solution to the cohomological equation:
\begin{equation}
\psi \circ f' - \psi = \f '
\end{equation}

Consider, now, $f \in \Diffm $ to be $CR$ and close to $f'$,
so that
\begeq
\| f \circ (f')^{-1} \|_{C^{s }} = \delta _{s }
\endeq
is small enough for $s \leq s_{0}$ big enough.
Then, the inverse of the coboundary operator,
\begeq
Cob_{f}^{-1}: Cob(f) \equiv \smm (M) \ra \smm (M)
\endeq 
satisfies for all $s,s' \leq s_{0}$
\begeq
\| Cob_{f}^{-1}\|_{\LL(H^{s},H^{s'})} \geq
\frac{\|\psi \|_{s'}}{\|\phi '\|_{s }+
C_{s}\vertiii{f} _{s}\|\psi \|_{s +1}\d _{s}}
\endeq
\end{lemma}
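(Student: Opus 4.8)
The plan is to estimate the operator norm of $Cob_f^{-1}$ from below by exhibiting an explicit function on which it is large, namely the right-hand side $\f$ of the cohomological equation over $f$ that corresponds to the known solution $\psi'$ over $f'$. First I would set $\f := \psi' \circ f - \psi'$, so that $\psi'$ is, by construction, the (unique, since $f$ is $DUE$) solution of $\psi' \circ f - \psi' = \f$; hence $Cob_f^{-1}(\f) = \psi'$ up to the additive constant, which is immaterial in $\smm(M)$. Therefore $\| Cob_f^{-1}\|_{\LL(H^s,H^{s'})} \geq \|\psi'\|_{s'} / \|\f\|_{s}$, and the whole content of the lemma is to bound $\|\f\|_{s}$ from above in terms of the given data $\|\f'\|_s$, $\|\psi'\|_{s+1}$ and $\d_s$.

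The key step is the triangle inequality $\f = (\psi' \circ f' - \psi') + (\psi' \circ f - \psi' \circ f') = \f' + (\psi' \circ f - \psi' \circ f')$, so that
\begeq
\|\f\|_{s} \leq \|\f'\|_{s} + \|\psi' \circ f - \psi' \circ f'\|_{s}.
\endeq
The second term is then controlled by the composition estimate \ref{composition 2} with $f_1 = f'$, $f_2 = f$: one obtains $\|\psi'\circ f - \psi'\circ f'\|_{s} \leq C_s \|\psi'\|_{s+1} \vertiii{f'}_{s} \|f\circ (f')^{-1}\|_{s} = C_s \|\psi'\|_{s+1} \vertiii{f'}_{s}\, \d_s$. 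Since $f$ and $f'$ are $C^{s_0}$-close, $\vertiii{f'}_{s}$ and $\vertiii{f}_{s}$ are comparable up to absorbing constants (this is where the a priori $C^0$-bound and smallness of $\d_s$ for $s \leq s_0$ are used, exactly as remarked after \ref{composition 2}), so one may write the bound with $\vertiii{f}_{s}$ as in the statement. Combining gives $\|\f\|_s \leq \|\f'\|_s + C_s \vertiii{f}_s \|\psi'\|_{s+1}\d_s$, and dividing $\|\psi'\|_{s'}$ by this yields precisely the claimed inequality.

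The only subtlety — and the one point deserving care rather than being routine — is ensuring that $\f$ genuinely lies in $Cob(f)$ and that $Cob_f^{-1}$ is well-defined as a map $\smm(M) \to \smm(M)$: this is exactly the hypothesis that $f$ is $CR$, which forces $Cob^\infty(f) \equiv \smm(M)$ and, via the $DUE$ property, makes the solution unique modulo constants, so the inverse operator is unambiguous. One should also note $\f \in \smm(M)$ automatically, since $\psi'\circ f - \psi'$ has zero $\mu$-mean because $f$ preserves $\mu$. I expect the main (very mild) obstacle to be bookkeeping the constants when passing from $\vertiii{f'}_s$ to $\vertiii{f}_s$; everything else is a direct application of \ref{composition 2} and the triangle inequality, so the proof is short.
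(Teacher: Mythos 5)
Your proposal is correct and is essentially identical to the paper's (very terse) proof: both define $\f := \psi' \circ f - \psi'$, decompose it as $\f' + (\psi' \circ f - \psi' \circ f')$, apply the composition estimate \eqref{composition 2}, and invoke uniqueness of the solution (from $CR \Rightarrow DUE$) to justify that $Cob_f^{-1}\f = \psi'$. Your attention to the $\vertiii{f'}_s$ versus $\vertiii{f}_s$ discrepancy is a reasonable and correct point of care that the paper leaves implicit, but it does not change the argument.
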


%
%
For the lemma to become relevant, we need to have control over
the growth of the derivatives of the solution $\psi$, so that
it can be compared with the distance $\d _{s}$. If we have such
information, the lemma provides
an estimate of the kind that semi-continuous dependence of
the norms of the coboundary operator would provide, if this
property were proved to hold.

\begin{proof}
By construction, the function $\psi $ is a solution to the equation
\begin{equation}
\psi \circ f - \psi = \f = \f ' + \psi \circ f - \psi \circ f'
\end{equation}
The estimate follows by application of the inequalities on
composition, eq. \ref{composition} and \ref{composition 2}
and the definition of the operator norm. The function $\psi $ is a solution
of minimal norm in any $H^{s}$ thanks to the hypothesis that $f \in CR$.
\end{proof}

\begin{corollary}
If in the setting of the lemma \ref{badness estimate cob}
\begeq
C_{s}\vertiii{f} _{s}\|\psi \|_{s +1}\d _{s} \leq \frac{1}{2}\|\phi '\|_{s }
\endeq
then
\begeq
\| Cob_{f}^{-1}\|_{\LL(H^{s},H^{s'})} \geq \frac{2}{3}
\frac{\|\psi \|_{s'}}{\|\phi '\|_{s }}
\endeq
\end{corollary}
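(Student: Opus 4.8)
The plan is simply to substitute the hypothesis of the corollary into the estimate already furnished by Lemma \ref{badness estimate cob}. First I would recall that, under the closeness assumption $\| f \circ (f')^{-1} \|_{C^{s }} = \delta _{s }$ small for $s \leq s_{0}$, Lemma \ref{badness estimate cob} gives, for all $s,s' \leq s_{0}$,
\begeq
\| Cob_{f}^{-1}\|_{\LL(H^{s},H^{s'})} \geq
\frac{\|\psi \|_{s'}}{\|\phi '\|_{s }+
C_{s}\vertiii{f} _{s}\|\psi \|_{s +1}\d _{s}}.
\endeq
The hypothesis $C_{s}\vertiii{f} _{s}\|\psi \|_{s +1}\d _{s} \leq \tfrac{1}{2}\|\phi '\|_{s }$ bounds the second summand of the denominator, so the whole denominator is at most $\tfrac{3}{2}\|\phi '\|_{s}$. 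Since $t \mapsto \|\psi\|_{s'}/t$ is decreasing for $t>0$, enlarging the denominator to $\tfrac{3}{2}\|\phi '\|_{s}$ yields the (weaker, but valid) lower bound
\begeq
\| Cob_{f}^{-1}\|_{\LL(H^{s},H^{s'})} \geq \frac{\|\psi \|_{s'}}{\tfrac{3}{2}\|\phi '\|_{s }} = \frac{2}{3}\frac{\|\psi \|_{s'}}{\|\phi '\|_{s }},
\endeq
which is exactly the claim.

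There is no genuine obstacle here: the entire content sits in Lemma \ref{badness estimate cob}, and this corollary is just the streamlined form obtained when the perturbation term $C_{s}\vertiii{f} _{s}\|\psi \|_{s +1}\d _{s}$ — produced by transporting the solution $\psi$ of the equation over $f'$ to the equation over $f$ — is absorbed by the size of the datum $\phi'$. The only point worth flagging, and the reason the inequality is stated as a smallness hypothesis rather than a conclusion, is that in the intended applications (the proof of Corollary \ref{cor no counter-ex}) one must verify $C_{s}\vertiii{f} _{s}\|\psi \|_{s +1}\d _{s} \leq \tfrac{1}{2}\|\phi '\|_{s }$ by hand; as noted in the discussion preceding the proof of Lemma \ref{badness estimate cob}, this requires controlling the growth of the derivatives of $\psi$ against the distance $\d_{s}$ separating $f$ from $f'$.
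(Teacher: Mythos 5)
Your proof is correct and is exactly the computation the paper has in mind; the paper itself states only that "the implication is obvious" and then comments on the role of the $1/2$ factor, which your closing paragraph also captures.
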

The implication is obvious. The $1/2$ factor in the hypothesis assures
that, in the notation of the lemma, $\f \not\equiv 0$. The corollary
(as well as the lemma) are relevant when
\begeq
\frac{\|\psi \|_{s'}}{\|\phi '\|_{s }}
\endeq
is a good estimate for $\| Cob_{f'}^{-1}\|_{\LL(H^{s},H^{s'})}$ and
$f$ is close to $f'$ with respect to the size of $\psi$.

Another way of exlpoiting continuity is given by the following lemma.
\begin{lemma} \label{lem closed graph}
Let $f_{n} \ra f \in CR(M)$ and functions $\f _{n} \in Cob^{\infty}(f_{n})$ such that if
\begeq
\psi _{n} \circ f_{n} - \psi _{n} = \f _{n} 
\endeq
with
\begeq
\| \partial \psi _{n} \|_{s} d(f,f_{n}) \ra 0 , \forall s
\endeq
then
\begeq
\psi _{n} \ra \psi
\endeq
where $\psi$ satisfies
\begeq
\psi \circ f - \psi = \f
\endeq
\end{lemma}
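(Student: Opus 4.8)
The plan is to reduce the assertion to the continuity of the inverse of the coboundary operator over $f$, a property that the hypothesis $f\in CR(M)$ guarantees. Since $f\in CR(M)$, the operator
\[
Cob_{f}:\smm(M)\to\smm(M),\qquad \psi\mapsto\psi\circ f-\psi,
\]
is a continuous linear bijection of the Fr\'{e}chet space $\smm(M)$ onto itself: surjectivity is the definition of $CR$, and injectivity holds because $f$, being $DUE$, is minimal, so every smooth $f$-invariant function is constant and hence $0$ in $\smm(M)$. By the open mapping theorem for Fr\'{e}chet spaces --- equivalently, the closed graph theorem, whence the name of the lemma --- the inverse $Cob_{f}^{-1}$ is continuous; concretely, for each $s$ there are $s'$ and $C$ with $\|Cob_{f}^{-1}g\|_{s}\leq C\|g\|_{s'}$ for all $g\in\smm(M)$.

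Next I would check that the $\psi_{n}$ are approximate solutions of the cohomological equation over $f$ with error tending to $0$ in $\smm(M)$. Writing
\[
\psi_{n}\circ f-\psi_{n}=\f_{n}+\eta_{n},\qquad \eta_{n}:=\psi_{n}\circ f-\psi_{n}\circ f_{n},
\]
the composition inequality \ref{composition 2} gives, for every $s$,
\[
\|\eta_{n}\|_{C^{s}}\leq C_{s}\,\vertiii{f_{n}}_{s}\,\|\psi_{n}\|_{C^{s+1}}\,d_{s}(f,f_{n}).
\]
The factors $\vertiii{f_{n}}_{s}$ stay bounded because $f_{n}\to f$; and since $\psi_{n}\in\smm(M)$ has zero mean one has $\|\psi_{n}\|_{C^{s+1}}\leq C_{s}\|\partial\psi_{n}\|_{C^{s}}$, so $\|\psi_{n}\|_{C^{s+1}}\,d(f,f_{n})\to 0$ by the hypothesis $\|\partial\psi_{n}\|_{s}\,d(f,f_{n})\to 0$. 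Hence $\eta_{n}\to 0$ in $\smm(M)$. Moreover $\int_{M}\eta_{n}\,d\mu=0$, since both $f$ and $f_{n}$ preserve $\mu$, so $\f_{n}+\eta_{n}\in\smm(M)$.

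Finally I would combine the two steps. Reading the statement in its natural ambient space $\smm(M)$ (any additive constant in $\psi_{n}$ is irrelevant, constants lying in the kernel of $Cob_{f_{n}}$), we have $\psi_{n}=Cob_{f}^{-1}(\f_{n}+\eta_{n})$. Together with $\f_{n}\to\f$ in $\smm(M)$ --- the implicit hypothesis pinning down the limit $\f$ --- and $\eta_{n}\to 0$, continuity of $Cob_{f}^{-1}$ gives $\psi_{n}\to\psi:=Cob_{f}^{-1}(\f)$, and by construction $\psi\circ f-\psi=\f$, as desired. As a by-product, the bound $\|\psi_{n}\|_{s}\leq C\|\f_{n}+\eta_{n}\|_{s'}$ shows the $\psi_{n}$ are automatically bounded in every $C^{s}$.

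The one genuinely non-elementary ingredient is this appeal to the open mapping (closed graph) theorem. Since the hypothesis only forces $\|\partial\psi_{n}\|_{s}\,d(f,f_{n})\to 0$, the norms $\|\psi_{n}\|_{s}$ may grow, and there is no a priori compactness permitting an Ascoli--Arzel\`{a} argument in the spirit of Lemma \ref{badness estimate 0}; the sole leverage is that $f\in CR$ makes $Cob_{f}$ a continuous bijection of the Fr\'{e}chet space $\smm(M)$ with itself, and it is exactly the open mapping theorem that upgrades this to the quantitative continuity of $Cob_{f}^{-1}$ that the conclusion requires. A secondary, purely bookkeeping point is the treatment of means, i.e. the reading of ``$\psi_{n}\to\psi$'' modulo the additive constants that minimality leaves undetermined.
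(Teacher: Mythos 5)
Your proposal is correct and follows essentially the same route as the paper: decompose $\psi_{n}\circ f-\psi_{n}$ into the coboundary over $f_{n}$ plus the error term $\psi_{n}\circ f-\psi_{n}\circ f_{n}$, show the error is $o(1)$ using the hypothesis $\|\partial\psi_{n}\|_{s}\,d(f,f_{n})\to 0$, and then appeal to the continuity of $Cob_{f}^{-1}$. The paper states this continuity without comment; you supplied the justification (open mapping theorem for Fr\'{e}chet spaces, bijectivity from $CR$ plus minimality) and spelled out the bookkeeping on means and the implicit hypothesis $\f_{n}\to\f$, all of which is consistent with the paper's terse argument.
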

\begin{proof}
We calculate
\begin{equation*}
\begin{array}{r@{}l}
\psi _{n} \circ f- \psi _{n} &= \psi _{n} \circ f - \psi _{n} \circ f_{n} + \psi _{n}
\circ f_{n}- \psi _{n} \\
&= o(1) + \f _{n} \ra \f
\end{array}
\end{equation*}
The continuity of the inverse of the Coboundary operator of $f$ implies the lemma.
\end{proof}

\section{Proof of theorem \ref{thm inst AK}}
We can now state a precise version of thm \ref{thm inst AK}.
\begin{theorem} \label{thm inst AK precise}
Suppose that $\SSS \subset \Diffm $ is a closed subspace such that
$\dot{\HH }^{-\s }(M) \cap \SSS$ is dense in $\SSS$ for some $\s \geq 0$.
Then, $CR(M) \cap \SSS $ is meagre (or empty).
\end{theorem}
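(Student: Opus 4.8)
The plan is to show that $CR(M)\cap\SSS$ is contained in a countable union of closed, nowhere dense subsets of $\SSS$. The key tension is between the defining property of $CR$ (the inverse of the coboundary operator is continuous on all of $\smm$, by the closed graph theorem together with $Cob^\infty(f)=\smm$) and the presence, dense in $\SSS$, of diffeomorphisms preserving a nontrivial invariant distribution $u\in\dot H^{-\s}(M)$, which forces the cohomological equation to be badly solvable nearby via Lemma~\ref{badness estimate}. First I would set up the exhaustion: for integers $m,k\ge 1$ define
\begeq
\mathcal{B}_{m,k}=\{\,f\in\SSS : \exists \f\in\smm,\ \|\f\|_{C^{\s}}\le 1,\ \exists\psi\in C^{\s+1}_\mu,\ \psi\circ f-\psi=\f,\ \|\psi\|_{C^{\s+1}}\le m,\ \text{dist}_{C^{\s+1}}(\f, \text{ some fixed compact}) \ \cdots\,\}
\endeq
— more cleanly: $CR(M)\cap\SSS\subset\bigcup_{m}\,\{f\in\SSS: \|Cob_f^{-1}\|_{\LL(H^{\s},H^{\s+1})}\le m\}=:\bigcup_m K_m$. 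Each $K_m$ is closed in $\SSS$: this is a routine consequence of the $C^{\s+1}$-continuity of $f\mapsto(\text{the operator }\psi\mapsto\psi\circ f-\psi)$ together with the estimates \ref{composition}--\ref{composition 2}, and closedness of $\SSS$. It remains to prove each $K_m$ has empty interior in $\SSS$.

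The core step is the emptiness of the interior. Fix $f\in K_m$ and any neighborhood of $f$ in $\SSS$; by density of $\dot\HH^{-\s}(M)\cap\SSS$ there is $f'$ in that neighborhood with an invariant $u\in\dot H^{-\s}(f')$, $\|u\|_{H^{-\s}}=1$. Now pick $\f\in\smm$ with $\langle u,\f\rangle\ne 0$ and, crucially, $\|\f\|_{C^{\s}}$ normalized but with $\langle u,\f\rangle$ bounded below — e.g. rescale so $\langle u,\f\rangle=1$ while $\|\f\|_{C^{\s}}$ stays bounded (possible since $u\ne 0$ in $H^{-\s}$). For $g$ ranging over a small $C^{\s}$-ball around $f'$, we have $\|g\circ(f')^{-1}\|_{C^{\s}}=\delta_\s$ arbitrarily small; if $g$ were $CR$ then $\f=Cob_g(\psi)$ for a genuine solution $\psi$ (no error term, so the hypothesis $|\langle u,\epsilon\rangle|\le\frac12|\langle u,\f\rangle|$ holds with $\epsilon=0$), and Lemma~\ref{badness estimate} gives
\begeq
\|\psi\|_{C^{\s+1}}\ \ge\ C_\s\,\vertiii{g}_\s^{-1}\,\frac{|\langle u,\f\rangle|}{\delta_\s}\ =\ \frac{C_\s}{\vertiii{g}_\s\,\delta_\s}\ \xrightarrow[\delta_\s\to 0]{}\ \infty,
\endeq
since $\vertiii{g}_\s$ stays bounded on the small ball. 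As $\|\f\|_{H^{\s}}$ is bounded, this forces $\|Cob_g^{-1}\|_{\LL(H^{\s},H^{\s+1})}\to\infty$ as $g\to f'$ within $CR$-diffeomorphisms. Hence in every neighborhood of $f$ there are points of $\SSS$ at which the operator norm exceeds $m$ (either because the diffeomorphism is not $CR$ at all, or because it is $CR$ but the norm is large), so $K_m$ contains no interior point. A Baire category argument then shows $CR(M)\cap\SSS=\bigcup_m K_m$ is meagre in $\SSS$, and by the Baire property of the closed (hence complete, in the Fréchet sense) space $\SSS$, if it is also dense it would be nonmeagre — so either it is meagre proper, or empty.

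The main obstacle I anticipate is the bookkeeping in the emptiness-of-interior step: one must choose $\f$ uniformly (independent of the approximating $g$) so that both $\langle u,\f\rangle$ is bounded below and $\|\f\|_{H^{\s}}$ is bounded above, and one must ensure the $\vertiii{g}_\s$ factor does not secretly blow up — this is fine on a fixed small $C^{\s}$-ball but requires care since $\SSS$ carries the $C^\infty$ topology and $K_m$ is defined via an $H^{\s}\to H^{\s+1}$ norm, so one should be mildly careful that controlling $\delta_\s$ in $C^{\s}$ suffices to invoke Lemma~\ref{badness estimate} (it does, since that lemma's hypothesis is exactly a $C^{\s}$-smallness of $f\circ(f')^{-1}$). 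A secondary subtlety is justifying that $f\mapsto Cob_f^{-1}$ having bounded operator norm defines a $C^\infty$-closed set; this is where the continuity estimates of \S3, in the spirit of Lemma~\ref{lem closed graph}, do the work. Once these are in place the Baire argument is immediate.
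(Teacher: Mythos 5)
Your strategy --- exhibiting $CR(M)\cap\SSS$ as a countable union of closed, nowhere dense subsets --- is a genuinely different route from the paper's. The paper instead exhibits a dense $G_\delta$ inside $\SSS\setminus CR(M)$, built out of open sets $V_{j,l}(i,k)$ of diffeomorphisms that are super--polynomially well approximated by a fixed dense sequence $\{f_n\}\subset\dot\HH^{-\s}(M)\cap\SSS$ relative to the truncation orders $N_n$ of their invariant distributions, and then invokes the construction of Proposition \ref{prop approx} to show such $f$ are not $CR$. Your route, if made to work, is shorter and bypasses Proposition \ref{prop approx} entirely. However, as written it has a genuine gap at the decomposition step.

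The claim $CR(M)\cap\SSS\subset\bigcup_m\{f:\|Cob_f^{-1}\|_{\LL(H^{\s},H^{\s+1})}\le m\}$ is false. The closed graph (equivalently, open mapping) theorem on the Fr\'echet space $\smm(M)$ only guarantees that for each $f\in CR(M)$ there exist $m=m(f)$ and $\tau=\tau(f)<\infty$ with $\|Cob_f^{-1}\f\|_{H^{\s+1}}\le m\|\f\|_{H^{\s+\tau}}$; it gives no control on $\tau$. Already for Diophantine rotations $\a\in DC(\tilde\gamma,\tilde\tau)$ with $\tilde\tau>1$ the small--denominator estimate shows $Cob_{R_\a}^{-1}$ is unbounded from $H^\s$ to $H^{\s+1}$: the resonant modes $k$ with $|\langle k,\a\rangle|_\Z\sim|k|^{-\tilde\tau}$ give $\|Cob_{R_\a}^{-1}\f\|_{H^{\s+1}}/\|\f\|_{H^\s}\gtrsim|k|^{\tilde\tau-1}\to\infty$. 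So the exhaustion must be indexed by pairs $(m,\tau)\in\N^2$, e.g.\ $K_{m,\tau}=\{f\in\SSS:\forall\f\in\smm(M),\ \|\f\|_{H^{\s+\tau}}\le 1\Rightarrow\exists\psi\in H^{\s+1}_\mu,\ \psi\circ f-\psi=\f,\ \|\psi\|_{H^{\s+1}}\le m\}$. With this fix the strategy does go through, and in fact more cheaply than you propose: $K_{m,\tau}$ defined via $H^{\s+1}$ (not $\sm$) solutions is closed by weak compactness of $H^{\s+1}$-bounded sets together with the composition estimates, while each $K_{m,\tau}$ is \emph{a priori} disjoint from $\dot\HH^{-\s}(M)\cap\SSS$ (if $f'$ preserves $u\in\dot H^{-\s}(f')$ then any $\f$ with $\langle u,\f\rangle\neq 0$, suitably normalized, has no $H^{\s+1}$ solution over $f'$, since $\langle u,\psi\circ f'-\psi\rangle=0$ for all $\psi\in H^{\s+1}$). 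Density of $\dot\HH^{-\s}(M)\cap\SSS$ then immediately gives each $K_{m,\tau}$ empty interior, so Lemma \ref{badness estimate} and the rescaling of $\f$ near each $f'$ --- which you flag as the delicate bookkeeping step, and which indeed carries a hidden nonuniformity in $\|\f\|_{H^\s}$ depending on where $u$ is spectrally concentrated --- are not needed at all.

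Two secondary points. First, your claim that closedness of the $K_m$ is a ``routine consequence'' of continuity of $f\mapsto Cob_f$ glosses over the fact that a weak limit of $H^{\s+1}$ solutions need not be smooth; this is exactly why $K_{m,\tau}$ must be defined via $H^{\s+1}$ solutions rather than as a subset of $CR(M)$, and is related to the paper's remark (after the statement of Theorem \ref{thm inst AK precise}) that it is unknown whether $CR$ is an $F_\sigma$ property. Second, the closing Baire remark is garbled: the empty set is meagre, and the theorem's parenthetical ``(or empty)'' is just emphasis, not a separate alternative to argue for.
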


Before providing the proof for this theorem, we remark that if it also happens (as in
\cite{AFKo2015} and \cite{NKInvDist}) that $DUE(M) \cap \SSS$ is dense, then
$DUE$ is actually generic in $\SSS$, since for general reasons $DUE$ is a $G_{\d}$ property:
\begin{equation} \label{eq DUE is Gd}
DUE(M)=
\bigcap _{m,n,k} \{
f \in \Diffm  , \exists \psi \in \smm ,
\| \psi \circ f - \psi - \phi _{k} \|_{C^{n}} < m^{-1}
\}
\end{equation}
where $\{ \phi _{k} \}$ is the basis of eigenfunctions of the Laplacian on $M$. However, there
is no apriori topological reason why $CR(M) \cap \SSS $ should not be empty.
The initial goal of this paper was in fact to prove that $CR$ is an $F_{\s}$ property (just
as $DC$), thus establishing the difficulty of the conjecture in full generality. We still do
not know whether this is true.

Theorem \ref{thm inst AK precise} explains why the techniques of \cite{AFKo2015} fail to conclude about the existence of
a counterexample to the conjecture, since they only provide information on generic diffeomorphisms in
the space $\AKsm $ as it is defined in the reference, where $\dot{\HH }^{-\s}(\T \times P)$ is dense for every
$\s \geq 0$: a generic diffeomorphism in that space has to be $DUE$ and not $CR$.
It also explains why the hands-on approach of \cite{NKInvDist} is needed in order
to exclude the existence of $CR$ diffeomorphisms in the respective space of dynamical systems.
%
%
%
%

\subsection{A proposition on approximation}
We now prove the following proposition concerning
the instability of a diffeomorphism $f$ that is well approximated by diffeomorphisms preserving distributions.
The proposition shows that, under quite mild conditions, $f$ will not be $CR$. The proof consists in producing
a function $\f \in \smm$, $\f \not\equiv 0$, which is not a coboundary over $f$.

\begin{proposition}  \label{prop approx}
Let $f \in \Diffm $ and suppose that there exists a sequence of diffeomorphisms $f_{n} \in \Diffm $,
$f_{n} \ra f $ in $\sm$,
satisfying the following properties:
\begin{enumerate}
\item There exists $\s \geq 0$ such that, for every $n \in \N$, $f_{n } \in \dot{\HH}^{-\s }(M)$.
\item If we let $\d _{n} = \d _{\s,n} = d_{\s}(f,f_{n}) \searrow 0 $, then there exist
a sequence $u_{n} \in \dot{H}^{-\s }(f_{n}) $ with $\| u_{n} \| _{H^{-\s}}=1$, a
sequence $N_{n} \in \N$, $K>0$ and $s_{0}\geq 0$, such that
\begin{equation} \label{eq spec un}
\| T_{N_{n}} u_{n}  \|_{H^{-\s }} \geq K \l _{N_{n}} ^{-s_{0} }
\end{equation}
and
\begeq
\d _{n} = O(\l _{N_{n}}^{-\infty})
\endeq
\end{enumerate}
Then, there exists $\w \in \smm (M)$ which is not an exact coboundary over $f$: $\w \notin Cob^{\infty}(f)$.
\end{proposition}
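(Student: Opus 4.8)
The plan is to exhibit directly a nonzero $\omega\in\smm(M)$ that is not a coboundary over $f$, with Lemma~\ref{badness estimate} as the only real input. Applied with $f'=f_n$, $u=u_n$, $\varphi=\omega$ and $\epsilon=0$ (so the smallness hypothesis on $\epsilon$ is automatic), that lemma says any exact smooth solution $\psi$ of $\psi\circ f-\psi=\omega$ satisfies $\|\psi\|_{C^{\sigma+1}}\ge C_\sigma\vertiii{f}_\sigma\,|\langle u_n,\omega\rangle|/\delta_n$, with a constant depending only on $f$ and $\sigma$; since a smooth $\psi$ has finite $C^{\sigma+1}$ norm, it suffices to build a smooth $\omega$ with $\langle u_n,\omega\rangle\neq0$ along a subsequence and $|\langle u_n,\omega\rangle|/\delta_n\to\infty$ there. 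Two reductions first: if $\delta_n=0$ for some $n$ then $f=f_n$ preserves the nontrivial distribution $u_n$ and any $\omega\notin\ker u_n$ works by \eqref{eq in ker of ob}; and I may assume $\lambda_{N_n}\to\infty$ (otherwise $\|T_{N_n}u_n\|_{H^{-\sigma}}$ is uniformly bounded below inside a fixed finite-dimensional space, and a compactness argument produces an $f$-invariant distribution directly).

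I would take $\omega=\sum_n c_n\omega_n$, a lacunary series along a subsequence $(f_{k_n},u_{k_n},N_{k_n})$ of the data, with $\omega_n:=T_{N_{k_n}}u_{k_n}$. Each $\omega_n$ is a finite combination of eigenfunctions, hence smooth with $\|\omega_n\|_{C^s}$ and $\|\omega_n\|_{H^\sigma}$ bounded by fixed powers of $\lambda_{N_{k_n}}$ (using $\|\phi_i\|_{C^s}\lesssim\lambda_i^{s+d/2}$, Weyl's law, and $|\langle u_{k_n},\phi_i\rangle|\le(1+\lambda_i)^\sigma$ from $\|u_{k_n}\|_{H^{-\sigma}}=1$), while $\langle u_{k_n},\omega_n\rangle=\|T_{N_{k_n}}u_{k_n}\|_{L^2}^2\ge\|T_{N_{k_n}}u_{k_n}\|_{H^{-\sigma}}^2\ge K^2\lambda_{N_{k_n}}^{-2s_0}>0$. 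The $k_n$ and the $c_n>0$ are chosen by induction. Having fixed $\omega^{<n}=\sum_{m<n}c_m\omega_m$, I select $k_n$ large (how large is specified below) and branch on $|\langle u_{k_n},\omega^{<n}\rangle|$: if it is $\ge\delta_{k_n}^{1/2}$, take $c_n$ so small that $|c_n\langle u_{k_n},\omega_n\rangle|\le\tfrac14\delta_{k_n}^{1/2}$; if it is $<\delta_{k_n}^{1/2}$, ``boost'' $c_n$ so that $c_n\langle u_{k_n},\omega_n\rangle=3\,\delta_{k_n}^{1/2}$ (legitimate since $\langle u_{k_n},\omega_n\rangle>0$). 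In either branch, provided the future tail obeys $\sum_{m>n}c_m|\langle u_{k_n},\omega_m\rangle|\le\tfrac14\delta_{k_n}^{1/2}$, the reverse triangle inequality gives $|\langle u_{k_n},\omega\rangle|\ge\tfrac12\delta_{k_n}^{1/2}$, hence $|\langle u_{k_n},\omega\rangle|/\delta_{k_n}\ge\tfrac12\delta_{k_n}^{-1/2}\to\infty$.

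The point that makes the induction close — and the only place $\delta_n=O(\lambda_{N_n}^{-\infty})$ is used — is that the boosted $c_n=3\delta_{k_n}^{1/2}/\langle u_{k_n},\omega_n\rangle\le 3K^{-2}\lambda_{N_{k_n}}^{2s_0}\delta_{k_n}^{1/2}$ must still respect two upper constraints: smoothness of $\omega$ (i.e. $c_m\|\omega_m\|_{C^s}$ summable in $m$ for every $s$, which holds once $c_m\le 2^{-m}\lambda_{N_{k_m}}^{-p(m)}$ with $p(m)$ linear in $m$) and the tail bound imposed by all earlier steps (which holds once $c_m\|\omega_m\|_{H^\sigma}\le 2^{-m-2}\delta_{k_{m-1}}^{1/2}$, so that $\sum_{m>n}c_m\|\omega_m\|_{H^\sigma}\le\delta_{k_n}^{1/2}\sum_{m>n}2^{-m-2}\le\tfrac14\delta_{k_n}^{1/2}$, using $\delta_{k_{m-1}}\le\delta_{k_n}$ for $m>n$ since $\delta_\cdot$ is decreasing). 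Both constraints bound $c_m$ by $2^{-m}$ times a fixed negative power of $\lambda_{N_{k_m}}$; since $\|\omega_m\|_{C^s},\|\omega_m\|_{H^\sigma}$ are fixed powers of $\lambda_{N_{k_m}}$ and $\delta_k^{1/2}=O(\lambda_{N_k}^{-\infty})$, every product $\lambda_{N_{k_m}}^{(\mathrm{fixed})}\delta_{k_m}^{1/2}$ tends to $0$ as $k_m\to\infty$, so after fixing everything up to step $m-1$ one takes $k_m$ far enough out that the boosted $c_m$ drops below all thresholds (in the non-boosted branch set $c_m$ to the minimum of the thresholds). Then $\omega\in C^\infty$; it has zero mean because each $\omega_m=T_{N_{k_m}}u_{k_m}$ does ($\langle u_{k_m},\phi_0\rangle=0$ as $u_{k_m}\in\dot{H}^{-\sigma}$), and $\omega\not\equiv0$ since $|\langle u_{k_1},\omega\rangle|>0$. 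Feeding $\omega$ into Lemma~\ref{badness estimate} along the subsequence then forces $\|\psi\|_{C^{\sigma+1}}\ge C_\sigma\vertiii{f}_\sigma\,\tfrac12\delta_{k_n}^{-1/2}$ for all $n$, which is impossible; hence $\omega\notin Cob^\infty(f)$.

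The main obstacle is precisely the cross-scale interference this induction is designed to defeat: the ``useful'' pairing $c_n\langle u_{k_n},\omega_n\rangle$ is unavoidably tiny (smoothness of $\omega$ forces $c_n$ to decay faster than any power of $\lambda_{N_{k_n}}$), whereas an earlier hump $\omega_m$, $m<n$, may a priori pair with $u_{k_n}$ as strongly as $\|\omega_m\|_{H^\sigma}$, a quantity that does not shrink with $n$; no fixed choice of coefficients can beat this, and the resolution is to boost only on demand and to buy room from $\delta_n=O(\lambda_{N_n}^{-\infty})$. (A softer, non-constructive alternative: were $f$ Cohomologically Rigid it would be $DUE$, so $Cob_f:\psi\mapsto\psi\circ f-\psi$ would be a continuous bijection of the Fréchet space $\smm(M)$, hence by the open mapping theorem $\|Cob_f^{-1}\varphi\|_{C^{\sigma+1}}\le C\|\varphi\|_{C^{s'}}$ uniformly; combined with Lemma~\ref{badness estimate} this forces $\|u_n\|_{H^{-s''}}\lesssim\delta_n$ for a fixed $s''$, while $\|T_{N_n}u_n\|_{H^{-\sigma}}\ge K\lambda_{N_n}^{-s_0}$ forces $\|u_n\|_{H^{-s''}}\gtrsim\lambda_{N_n}^{\sigma-s''-s_0}$, contradicting $\delta_n=O(\lambda_{N_n}^{-\infty})$.)
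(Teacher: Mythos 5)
Your proof is correct and follows essentially the same strategy as the paper's: build $\omega$ as a lacunary series of spectrally truncated bumps $\omega_n$ associated to the $u_n$, so that the fast-approximation hypothesis $\delta_n=O(\lambda_{N_n}^{-\infty})$ allows the bumps to pair non-trivially with the $u_n$ while still summing to a smooth function, and then feed the resulting lower bound into Lemma~\ref{badness estimate} to force $\|\psi\|_{C^{\sigma+1}}=\infty$. The variations from the paper are at the level of implementation rather than strategy: you take $\omega_n=T_{N_{k_n}}u_{k_n}$ directly, whereas the paper truncates the $H^\sigma$-representer of $u_n$; you ensure the cross-term cancellation by ``boosting on demand'' (choosing $c_n$ tiny when the earlier terms already pair strongly, and large otherwise), whereas the paper fixes $|c_n|=\lambda_{N_n}^{s_1}$ and only chooses its sign; and you pass to an inductively chosen subsequence $k_n$ with explicit thresholds, whereas the paper says ``up to considering a subsequence'' once for the tail. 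All of these come to the same thing. One thing you do slightly more completely: the paper's argument as written needs $\lambda_{N_n}\to\infty$ so that $\lambda_{N_n}^{s_1-s_0}\to\infty$, and the case of bounded $N_n$ is only addressed afterwards in a remark and Corollary~\ref{cor div Nn}; you dispose of it at the outset by extracting a weak-* limit of $(u_n)$ in $H^{-\sigma}$, using the uniform lower bound on $\|T_Nu_n\|_{H^{-\sigma}}$ to see the limit is nonzero and the convergence $f_n\to f$ to see it is $f$-invariant, which is a clean reduction. Your soft alternative in the final parenthetical (open mapping theorem in the Fr\'echet space plus the $H^{-s''}$ dichotomy) is a nice remark but is sketchier and would need more care to make airtight; the constructive argument is the one that matches the paper.
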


The condition of item $2$ of the theorem compares the rate of
convergence of the $f_{n}$ to $f$ with the spectrum of the
$u_{n}$ and asks that the former be infinitesimal with respect
to the latter (condition to be compared with the approximation
of a Liouvillean number by its best rational approximations).
We point out for later use that, up to considering a subsequence
depending on a fixed $M>0$, we can impose that
\begeq
\d _{n}^{-1} \sum _{k>n} \d _{k} < M
\endeq

Under the assumption of the existence a sequence $f_{n} \in \Diffm $ such that
$f _{n} \ra f $ fast enough and such that $H^{-\s }(f_{n}) \neq \emptyset$ for some $\s > 0$
and for every $n \in \N$, then
lemma \ref{badness estimate} becomes relevant. If we let
$\d _{s,n} = d_{s}(f,f_{n}) \searrow 0 $,
and we suppose that $\w \in \smm (M)$ and $u_{n} \in H^{-\s }(f_{n})$ is such that
\begin{equation}  \label{eq. distr. ag cob}
 \d _{\s ,n}^{-1} |\langle u_{n} , \w \rangle |\ra \infty
\end{equation}
by lemma \ref{badness estimate}, this would force a solution $\psi$ of the cohomological
equation
\begeq
\psi \circ f - \psi = \w
\endeq
to satisfy $\| \psi \|_{\s +1} = \infty$, so that no such smooth function can exist.
We stress that we do not claim that $\w \in \overline{Cob^{\infty}(f)}^{cl_{\infty}}$.
We make the following remark for future use.
\begin{remark} \label{rem depend on param}
Each truncation order $N(\tilde{f})$ is non-increasing with respect to $s_{0}$ and
non-decreasing with respect to $K$.
\end{remark}

We now construct such a function $\w$.

\begin{proof}
Let us chose $u_{n} \in \dot{H}^{-\s }(f_{n})$,
$\| u_{n} \| _{H^{-\s}} =1$, and $\w ^{(n)} \in H^{\s }_{\mu } (M)$, $\| \w^{(n)} \| _{H^{\s}} =1$ with\footnote{Here
we implicitly identify $H^{\s }$ with its dual. Since we keep $\s $ fixed, there are no
complications. The argument would become more involved if one wishes to allow $\s = \s _{n} \ra \infty$.}
\begeq
\w ^{(n)} \perp _{H^{\s}} \ker u_{n}
\endeq
Since our goal is to construct a $\smm$ function, we truncate the functions $\w ^{(n)}$
in order to obtain
\begeq
\w _{n} (\. ) = T_{N_{n}} \w ^{(n)} (\. ) \in \smm (M)
\endeq
and sum them following
\begeq
\w (\. ) = \sum _{k=1}^{\infty} c_{k} \d _{k}\w _{k} (\. )
\endeq
where $\d _{n} = \d _{\s ,n} $
The function thus defined will be smooth provided that $\d _{n} = O(\l _{N_{n}}^{-\infty})$, and
$|c_{n}| = O(\l _{N_{n}}^{s_{1}})$ for some $s_{1}<\infty$.

Under these conditions, let us calculate and estimate the lhs of the limit in eq. \ref{eq. distr. ag cob}:
\begin{equation} \label{eq. norm dist ag cob}
\d _{n}^{-1}  \langle u_{n} , \w \rangle  =
\d _{n}^{-1} \sum _{k<n} c_{k} \d _{k} \langle u_{n} , \w  _{k} \rangle
+
c_{n} \langle u_{n} ,\w _{k}  \rangle
+
\d _{n}^{-1} \sum _{k>n} c_{k} \d _{k} \langle u_{n} , \w _{k} \rangle
\end{equation}
No reasonable assumption seems to exist that imposes restrictions on the first sum.
For example, $\langle u_{n} , \w _{k} \rangle \ll \d _{n}, 0<k<n$, seems to be needlessly restrictive.
Fortunately, such an assumption appears to be unnecessary: we need only consider the sign
of the sum,
\begeq
\begin{cases}
+1, \text{ if } \sum _{k<n} c_{k} \d _{k} \langle u_{n} , \w  _{k} \rangle \geq 0 \\
-1, \text{ if } \sum _{k<n} c_{k} \d _{k} \langle u_{n} , \w  _{k} \rangle < 0
\end{cases}
\endeq
and chose the sign of $c_{n} = \pm \l _{N_{n}}^{s_{1}}$ accordingly.
Then, the first two terms in eq. \ref{eq. norm dist ag cob} are, in absolute value,
$\geq | c_{n} \langle u_{n} ,\w _{n}  \rangle |$ so that, under our assumptions,
\begeq
| 
\d _{n}^{-1} \sum _{k<n} c_{k} \d _{k} \langle u_{n} , \w _{n} \rangle
+
c_{n} \langle u_{n} ,\w _{n}  \rangle | \geq 
| c_{n} \langle u_{n} ,\w _{n}  \rangle | \geq K \l _{N_{n}}^{s_{1}-s_{0}} \ra \infty
\endeq
so long as $s_{1}>s_{0}$.

In order to establish the divergence of the limit in eq.
\ref{eq. distr. ag cob}, we have to be able to conclude that the last sum in eq.
\ref{eq. norm dist ag cob} is $o(\l _{N_{n}}^{s_{1}-s_{0}})$.
To this end, it actually suffices to estimate brutally
$ | c_{k} \langle u_{n} ,\w _{k}\rangle | \leq | c_{k} |= \l _{N_{k}}^{s_{1}}$,
which implies that
\begeq
|\d _{n}^{-1} \sum _{k>n} c_{k} \d _{k} \langle u_{n} , \w_{k} \rangle | \leq
\d _{n}^{-1} \sum _{k>n} |c_{k}| \d _{k}
\endeq
Then, up to considering a subsequence, we can bound the rhs of the inequality
by an absolute constant, and this concludes the construction of $\w (\. )$.
\end{proof}

We remark that the proposition shows that, under quite mild conditions, $Cob^{\infty}(f)$ may not
even contain the space
\begeq
\{ \f \in \smm , \langle u_{n} , \f \rangle \ra 0 \}
\endeq
We also remark that we do not need to assume that $N_{n} \ra \infty$, even 
though this is expected to occur in general. In particular, if the limit diffeomorphism
$f$ is $DUE$, the sequence $N_{n}$ has to diverge, as imply the following lemma and its
corollary.

\begin{lemma} \label{lem approx dist comp}
Let $f \in \Diffm $, and suppose that there exists a sequence $\{ f_{n} \} \in \Diffm $,
$f_{n} \ra f$ and satisfying the following condition:
\begin{enumerate}
\item $f_{n} \in \dot{\HH} ^{-\s } (M) $
\item there exists a pre-compact sequence $(u_{n}) \in \dot{H}^{-\s} (M)$ with
$u_{n } \in \dot{H} ^{-\s } (f_{n})$ for every $n \in \N$.
\end{enumerate}
Then, $f \notin DUE(M)$.
\end{lemma}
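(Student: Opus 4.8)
The plan is to produce, as a limit of the invariant distributions $u_n$, a nonzero $f$-invariant distribution $u\in\dot{H}^{-\s}(M)$; since any such $u$ is linearly independent of the invariant measure $\mu$, this yields $\dim\DD'(f)\ge 2$ and hence $f\notin DUE(M)$. By the precompactness hypothesis on $(u_n)$ we may pass to a subsequence (not relabelled) along which $u_n\to u$ in $\dot{H}^{-\s}(M)$. By the standing convention, $\|u_n\|_{H^{-\s}}=1$ for every $n$, so $\|u\|_{H^{-\s}}=1$; in particular $u\ne 0$. Moreover $u\in\dot{H}^{-\s}$ forces $\hat{u}_0=0$, whereas $\langle\mu,\phi_0\rangle=\int_M\phi_0\,d\mu\ne 0$, so $u$ is not a scalar multiple of $\mu$ and $\{u,\mu\}$ is linearly independent.

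It remains to verify $f_*u=u$. Fix $\psi\in\sm(M)\subset H^{\s}$. The invariance $(f_n)_*u_n=u_n$ reads $\langle u_n,\psi\circ f_n\rangle=\langle u_n,\psi\rangle$, and the right-hand side converges to $\langle u,\psi\rangle$ since $u_n\to u$ in $H^{-\s}$. For the left-hand side, write
\begin{equation*}
\langle u_n,\psi\circ f_n\rangle-\langle u,\psi\circ f\rangle
=\langle u_n-u,\,\psi\circ f_n\rangle+\langle u,\,\psi\circ f_n-\psi\circ f\rangle .
\end{equation*}
Since $f_n\to f$ in $\sm$ and $\psi\in\sm(M)$, the composition estimates \ref{composition} and \ref{composition 2} give $\|\psi\circ f_n-\psi\circ f\|_{C^{s}}\to 0$ for every $s$, hence $\psi\circ f_n\to\psi\circ f$ in $\sm(M)$, and a fortiori in $H^{\s}$; in particular $\sup_n\|\psi\circ f_n\|_{H^{\s}}<\infty$. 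Therefore the first term on the right is $O(\|u_n-u\|_{H^{-\s}})\to 0$, and the second is bounded by $\|u\|_{H^{-\s}}\|\psi\circ f_n-\psi\circ f\|_{H^{\s}}\to 0$. Letting $n\to\infty$ gives $\langle u,\psi\circ f\rangle=\langle u,\psi\rangle$.

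As $\psi\in\sm(M)$ was arbitrary, $f_*u=u$, so $u\in\dot{H}^{-\s}(f)\subset\DD'(f)$; together with $\mu\in\DD'(f)$ and the linear independence established above, $\dim\DD'(f)\ge 2$, i.e. $f\notin DUE(M)$. The only step that is not pure bookkeeping is the passage to the limit inside the duality pairing, and it reduces to the continuity statement $\psi\circ f_n\to\psi\circ f$ supplied by the composition estimates, combined with the uniform normalisation $\|u_n\|_{H^{-\s}}=1$ that makes the precompactness hypothesis yield a nonzero limit. Note that full $\sm$-convergence of $f_n$ to $f$ is not needed: convergence in a $C^{k}$ topology with $k$ slightly larger than $\s$ already suffices.
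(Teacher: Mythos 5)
Your argument is essentially the same as the paper's: both extract an $H^{-\sigma}$-accumulation point $u$ of the $u_n$, use the normalisation $\|u_n\|_{H^{-\sigma}}=1$ to get $u\ne 0$, and use the composition estimates together with $f_n\to f$ to exchange a limit inside the duality pairing. The only difference is presentational: the paper argues by contradiction that a test function $\varphi$ with $\langle u,\varphi\rangle=1$ cannot be an approximate coboundary over $f$, then remarks afterwards that in fact $u\in\dot H^{-\sigma}(f)$; you prove $f_*u=u$ directly and conclude from $\hat u_0=0\ne\hat\mu_0$ that $\dim\DD'(f)\ge 2$, which is the cleaner route to the same conclusion.
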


\begin{proof}
Let
\begeq
u \in \bigcap _{n \in \N} \overline{\bigcup _{k \geq n} \dot{H}^{-\s } (f_{k})}^{cl_{H^{-\s}}} \subset \dot{H}^{-\s } (M)
\endeq
be an accumulation point of the invariant distributions of the $f_{n}$, and let $\f \in \smm (M)$ be such that
$\langle u , \f \rangle = 1$. If $\f$ were an approximate coboundary over $f$, then there would
exist $\epsilon (\. )$ and $\psi (\. )$ such that
\begeq
\psi \circ f (\. ) - \psi (\. ) = \f (\. ) +\epsilon (\. )
\endeq
with $\epsilon \in \smm $ arbitrarily small in the $\sm $ topology, and $\psi \in \smm$ depending on
$\epsilon (\. )$. Given such $\epsilon (\. )$ and $\psi (\. )$, for $n $ big enough
\begeq
\psi \circ f _{n} (\. ) - \psi (\. ) = \f (\. ) + \epsilon (\. ) + \d _{n} (\. )
\endeq
with $ \d _{n} (\. )= \psi \circ f _{n} (\. ) - \psi \circ f (\. )$ arbitrarily small in the $\sm $ topology
as $n \ra \infty$. For $n$ big enough and in a
subsequence, the rhs tests $>1/2$ against $u_{n}$, while the lhs test $0$, a contradiction.
\end{proof}
In fact, $u \in \dot{H}^{-\s}(f)$. The proof grants the following corollary.
\begin{corollary} \label{cor div Nn}
Under the hypotheses of prop. \ref{prop approx}, if additionally $f \in DUE(M)$, then $N_{n} \ra \infty$.
\end{corollary}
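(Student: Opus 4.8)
The plan is to argue by contradiction, reusing verbatim the mechanism in the proof of Lemma~\ref{lem approx dist comp} and only adding the observation that produces a \emph{non-zero} $f$-invariant distribution out of the sequence $u_{n}$ when the truncation orders $N_{n}$ fail to diverge. So suppose $N_{n}\not\ra\infty$. First note that $N_{n}\geq 1$ for every $n$: if $N_{n}=0$ then $T_{N_{n}}u_{n}=\hat u_{n,0}\phi_{0}=0$ since $u_{n}\in\dot{H}^{-\s}$, contradicting eq.~\ref{eq spec un} (as $K>0$). Passing to a subsequence we may assume $1\leq N_{n}\leq N$ for a fixed $N$; then $0<\l_{N_{n}}\leq\l_{N}$ and $s_{0}\geq 0$ give $\l_{N_{n}}^{-s_{0}}\geq\l_{N}^{-s_{0}}$, and since $T_{N}u_{n}$ contains all the modes of $T_{N_{n}}u_{n}$ we obtain the \emph{uniform} lower bound $\|T_{N}u_{n}\|_{H^{-\s}}\geq\|T_{N_{n}}u_{n}\|_{H^{-\s}}\geq K\l_{N}^{-s_{0}}=:c>0$.

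Next I would extract a weak limit. Since $\|u_{n}\|_{H^{-\s}}=1$ and $H^{-\s}$ is a Hilbert space, a further subsequence has $u_{n}\rightharpoonup u$ in $H^{-\s}$ with $\|u\|_{H^{-\s}}\leq 1$. The two properties inherited by $u$ are exactly those established inside Lemma~\ref{lem approx dist comp}: first, $f_{*}u=u$, obtained by writing $\langle u_{n},\psi\circ f_{n}\rangle=\langle u_{n},\psi\rangle$ for a fixed $\psi\in\sm(M)$ and letting $n\ra\infty$, using that $\psi\circ f_{n}\ra\psi\circ f$ in $\sm$, hence in $H^{\s}$ (by the composition estimates, eq.~\ref{composition} and \ref{composition 2}), against the weak convergence of $u_{n}$; second, $u\in\dot{H}^{-\s}$, because $\hat u_{n,0}=0$ for all $n$ and $\hat u_{n,0}\ra\hat u_{0}$.

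The crux, and the only place where boundedness of $N_{n}$ enters, is to show $u\neq 0$. Here I would use that $T_{N}$ is a bounded finite-rank operator on $H^{-\s}$, hence weak-to-norm continuous: testing $u_{n}\rightharpoonup u$ against the finitely many $\phi_{i}$, $i\leq N$, gives $\hat u_{n,i}\ra\hat u_{i}$, so $\|T_{N}u_{n}\|_{H^{-\s}}\ra\|T_{N}u\|_{H^{-\s}}$, and the uniform lower bound yields $\|u\|_{H^{-\s}}\geq\|T_{N}u\|_{H^{-\s}}\geq c>0$. After normalisation, $u/\|u\|_{H^{-\s}}\in\dot{H}^{-\s}(f)$, so $f\in\dot{\HH}^{-\s}(M)$; since every non-zero multiple of $\mu$ has non-zero $\phi_{0}$-coefficient whereas $u$ does not, this invariant distribution is not proportional to $\mu$, whence $\DD'(f)\supsetneq\R\mu$ and $f\notin DUE(M)$, contradicting the hypothesis. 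Therefore $N_{n}\ra\infty$.

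The heart of the matter is that weak limits of unit distributions in $H^{-\s}$ can collapse to $0$, and this collapse is precisely what happens when the low-mode mass of $u_{n}$ leaks to arbitrarily high frequencies, i.e. when $N_{n}\ra\infty$ --- the case we are \emph{not} excluding. As long as $N_{n}$ stays bounded, the fixed finite-rank projection $T_{N}$ detects that mass and eq.~\ref{eq spec un} survives the limit; everything else is the argument of Lemma~\ref{lem approx dist comp}, which is presumably what ``the proof grants the following corollary'' is meant to signal.
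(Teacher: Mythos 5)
Your proof is correct and is essentially the argument the paper intends, made fully explicit. The paper's proof simply asserts the existence of a $\varphi$ with $T_M\varphi=\varphi$ and $\limsup|\langle u_n,\varphi\rangle|>0$ and then points to the proof of Lemma~\ref{lem approx dist comp}; the compactness step behind that assertion is exactly what you carry out by extracting a weak limit $u$ of the $u_n$, using the bounded truncation order $N_n\leq N$ to conclude, via weak-to-norm continuity of the finite-rank projection $T_N$ and the uniform lower bound from eq.~\ref{eq spec un}, that $u\neq 0$. You then close by exhibiting $u\in\dot H^{-\sigma}(f)\setminus\{0\}$ directly, rather than exhibiting a function $\varphi$ that is not an approximate coboundary; the two endings are dual to each other (cf. the remark immediately following the corollary in the paper) and equivalent for the purpose of contradicting $f\in DUE(M)$.
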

\begin{proof}
If $N_{n} $ can be chosen to be bounded by $M \in \N ^{*}$, then there exists $\f \in \smm (M)$,
spectrally supported in the first $M$ modes,
\begeq
T_{M} \f (\. ) = \f (\. )
\endeq
and such that
\begeq
\limsup | \langle u _{n}, \f \rangle | > 0
\endeq
The proof of lemma \ref{lem approx dist comp} implies that $\f $ cannot be an approximate coboundary
over $f$.
\end{proof}
\begin{remark}
We point out that we have shown that, if there exists $\f \in \smm (M)$, a sequence
$f_{n} \ra f$ and $u _{n} \in \dot{H}^{\s}(f _{n})$, $\| u \|_{-\s} =1$, such that
\begeq
\limsup | \langle u _{n}, \f \rangle | > 0
\endeq
then $f \notin DUE$.
\end{remark}

Lemma \ref{lem approx dist comp} and its corollary imply the intuitively obvious fact that if
a diffeomorphism $f$, built by approximation by the sequence $f _{n}$, is to be $DUE$, then
the obstructions of the $f_{n}$ have to recede to infinity, precisely as in \cite{AFKo2015} and
\cite{NKInvDist}.
The important point in the proof is of course the compactness of functions with bounded spectrum.

\subsection{Proof of thm \ref{thm inst AK}}

We now provide the proof of the precise version of thm \ref{thm inst AK}.

%

\begin{proof} [Proof of thm \ref{thm inst AK precise}]
Let $\{ f_{n} \}_{\N}$ be a dense set in $\dot{\HH }^{-s}(M) \cap \SSS$. Let, now, $i,k \in \N ^{*}$,
and for $n \in \N$ choose $u_{n} \in \dot{H}^{-\s}(f_{n})$ of norm $1$ and
$N_{n} = N_{n}(i,k)$ such that
\begeq
\|T_{N_{n}} u_{n} \| _{H^{-\s}} \geq k ^{-1} \l _{N_{n}}^{-i}
\endeq
For each $n$, choose $u_{n}$ so that $N_{n}$ is minimal.
Then, $N_{n}$ is non-increasing as $i,k$ increase.

Now, for $j,l \in \N ^{*}$ define
\begeq
V_{j,l} = V_{j,l}(i,k) =  \{ f \in \SSS , \exists n \in \N , d_{\s} (f, f_{n}) < l ^{-1} \l^{-j}_{N_{n}} 
\}
\endeq
By definition, $V_{j,l}$ is open and non-empty, since it contains $\{ f_{n} \}$.

Lemma \ref{badness estimate} shows that $\SSS \setminus CR(M)$ contains the $G_{\d}$ set
\begeq
\bigcup _{i,k} \bigcap _{j,l} V_{j,l}(i,k)
\endeq
and thus $\SSS \setminus CR(M)$ is of the second category.
\end{proof}
%
%

Lemma \ref{badness estimate 0} becomes relevant in a space where Cohomological Instability
is
a priori known to be dense. The following theorem shows that it is sufficient for $CR$
to be meagre, independently of the presence of distribution-preserving diffeomorphisms.

Before stating the theorem, we need to establish some notation.
\begin{definitionn}
Let us call for
$-\infty \leq \s \leq \infty$
\begeq
\QQ (\s ) = \{
f \in \Diffm, 
Cob^{\s}(f) \subsetneq \overline{Cob^{\s}(f)}^{cl_{\infty}} \subseteq \smm (M)
\}
\endeq
\end{definitionn}
We have clearly $\QQ (\s ') \subset \QQ (\s )$ if $\s < \s '$. Moreover, as soon as
the the coboundary operator is not open, its image is meagre in $\smm (M)$ (this is
true for linear operators in Banach spaces).
A Diophantine rotation is not in any $\QQ (\s )$, while a Liouvillean one is in
$\QQ (\s ) $ for all $-\infty \leq \s \leq \infty$. A non-irrational rotation in $\T ^{d}$
inducing a minimal rotation in a torus of dimension $0<d' < d$ will be in all, resp. no,
$\QQ (\s )$, if the induced rotation is Liouville, resp. Diophantine.
%

\begin{theorem} \label{thm pert DUE}
Let $\SSS$ be a closed subset of $\Diffm $ and suppose that there exists $\s >- \infty$
such that $\QQ (\s ) \cap \SSS $ is dense in $\SSS$.
Then $CR(M)\cap \SSS$ is meager in $\SSS $ (or empty).
\end{theorem}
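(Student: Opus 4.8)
The plan is to mimic the structure of the proof of theorem \ref{thm inst AK precise}, replacing the appeal to lemma \ref{badness estimate} by an appeal to lemma \ref{badness estimate 0}. Concretely, I would show that $CR(M) \cap \SSS$ is contained in a meagre $F_\s$ subset of $\SSS$ by exhibiting a countable union of closed nowhere dense sets (equivalently, a residual set contained in $\SSS \setminus CR(M)$ built as a countable intersection of open dense sets). The starting point is a countable dense set $\{ f_n \}_{n \in \N}$ in $\QQ(\s) \cap \SSS$, which exists by the density hypothesis together with separability of $\Diffm$.

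First I would fix, for each $n$, a function $\f_n \in \overline{Cob^{\infty}(f_n)}^{cl_\infty} \setminus Cob^{\infty}(f_n)$ and, by density of the eigenfunction expansion together with a diagonal/truncation argument, arrange that $\f_n$ lies in (or is approximated in $\sm$ by) the span of finitely many $\phi_k$; in any case lemma \ref{badness estimate 0} applies to the pair $(f_n, \f_n)$ and produces, for each target bound $M \in \N^*$ and each chosen $s_0$, a threshold $\e = \e(n, M, s_0) > 0$. Next, for integers $i, M, l \in \N^*$ I would define the open sets
\begeq
V_{i, M, l} = \{ f \in \SSS \;:\; \exists n \in \N,\ d_{s_0 + i}(f, f_n) < l^{-1} \e(n, M, s_0)\, \vertiii{f_n}_{s_0+i}^{-1} \}
\endeq
which are open and contain the dense set $\{ f_n \}$, hence are open and dense in $\SSS$. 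The core claim is then that any $f$ lying in $\bigcap_{i, M, l} V_{i, M, l}$ cannot be $CR$: given such an $f$, for every $M$ it is approximated arbitrarily well by some $f_n \in \QQ(\s)$ at a rate beating the threshold $\e(n,M,s_0)$, so lemma \ref{badness estimate 0} forces any solution $\psi$ of $\psi \circ f_n - \psi = \f_n$ to satisfy $\| \psi \|_{s_0+1} > M$; but if $f$ were $CR$, lemma \ref{lem cohom eq ch coord} / a limiting argument using the continuity of $Cob_f^{-1}$ (as in lemma \ref{lem closed graph}) would bound these solutions uniformly, a contradiction. Thus $\SSS \setminus CR(M) \supseteq \bigcap_{i,M,l} V_{i,M,l}$ is residual, so $CR(M) \cap \SSS$ is meagre (or empty).

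The main obstacle I expect is the passage from "$f$ is well approximated by diffeomorphisms in $\QQ(\s)$" to "$f$ is not $CR$", because lemma \ref{badness estimate 0} is formulated for a fixed $\f$ attached to a fixed approximating diffeomorphism, whereas here the $\f_n$ vary with $n$. One must either (i) extract a subsequence along which the $\f_n$ converge in $\sm$ to some $\f_\infty$, then argue that $\f_\infty$ is an approximate but not exact coboundary over $f$ and invoke the instability estimate directly at $f$, or (ii) argue by contradiction: assume $f \in CR(M)$, so $Cob_f^{-1} : \smm(M) \to \smm(M)$ is a well-defined closed-graph (hence continuous) operator, and derive from the lower bounds of lemma \ref{badness estimate 0} at the approximants $f_n$ a violation of the operator-norm bounds of $Cob_f^{-1}$, exactly the mechanism underlying lemma \ref{badness estimate cob} and lemma \ref{lem closed graph}. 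Route (ii) seems cleaner and is presumably what the authors intend, since it localises the quantitative work in already-proved lemmas; the delicate point is verifying that the hypothesis $\| \partial \psi_n \|_s\, d(f, f_n) \to 0$ of lemma \ref{lem closed graph} is met, which is precisely what the fast-approximation condition encoded in the sets $V_{i, M, l}$ guarantees. The remaining steps — openness and density of the $V_{i,M,l}$, the Baire category conclusion, and the reduction $CR \subseteq DUE \cap CS$ so that failure of either disqualifies $CR$ — are routine.
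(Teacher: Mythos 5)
Your proposal captures the Baire-category skeleton and correctly identifies lemma~\ref{badness estimate 0} as the engine, but it diverges from the paper's proof in a structural way that leaves a genuine gap. The paper does \emph{not} attach a single $\f_n$ to each approximant $f_n$: instead it invokes the fact that each $Cob^{\s}(f_m)$ is meagre in $\smm(M)$, so by Baire's theorem one can pick a family $\{\f_n\}$ with the much stronger property that $\f_n \notin Cob^{\s}(f_m)$ \emph{for all} $m$ simultaneously. With that choice, the open sets are indexed by the \emph{function} and defined directly by solution blow-up,
\begin{equation*}
\VV_{M,n} = \{ f \in \SSS : \psi\circ f - \psi = \f_n \ \Rightarrow\ \|\psi\|_{C^{\s+1}} > M \},
\end{equation*}
and they are dense for free because they contain every $f_m$ (the implication is vacuous over each $f_m$). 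A diffeomorphism $f$ in the residual intersection $\bigcap_{M,n}\VV_{M,n}$ then fails to have $\f_n$ as a smooth coboundary for any $n$, so $Cob^\infty(f) \neq \smm(M)$ directly; no appeal to continuity of $Cob_f^{-1}$ is needed.

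Your route must then close the loop you yourself flag: since your $\f_n$ changes with $n$ and is only excluded from $Cob^\infty(f_n)$, membership in $\bigcap V_{i,M,l}$ does not by itself name a single function outside $Cob^\infty(f)$, and you fall back on ``$f$ is $CR \Rightarrow Cob_f^{-1}$ is continuous, contradiction.'' That contradiction can be made to work, but not as written. First, lemma~\ref{lem closed graph} is the wrong tool: its hypothesis $\|\partial\psi_n\|_s\,d(f,f_n)\to 0$ is an \emph{upper} bound on solutions of the equation over the \emph{approximants} $f_n$, whereas lemma~\ref{badness estimate 0} produces \emph{lower} bounds on solutions of the equation over the \emph{limit} $f$; they do not mesh. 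What you actually need is the open mapping theorem for the Fréchet space $\smm(M)$: if $f\in CR$, then $Cob_f$ is a continuous bijection and hence $Cob_f^{-1}$ is continuous, so $\|Cob_f^{-1}\f_n\|_{\s+1}\le C\|\f_n\|_{s'}$ for some $s'$ and $C$ depending on $f$. Second, to turn this into a contradiction with the $\|\psi\|>M$ lower bound you must keep $\|\f_n\|_{s'}$ uniformly bounded for that unknown $s'$; the ``span of finitely many $\phi_k$'' normalisation you mention does not give this, whereas, e.g., normalising $\|\f_n\|_{C^n}=1$ (so that $\|\f_n\|_{s'}\le 1$ eventually) does. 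These two repairs are precisely what the paper's choice of $\{\f_n\}$ makes unnecessary: by ensuring $\f_n\notin Cob^\s(f_m)$ for all $m$, the paper sidesteps both the open-mapping step and the uniform-norm bookkeeping, and the argument closes in one line.
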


Naturally, the theorem becomes relevant when $\QQ (\s ) \cap DUE $ is a priori known to
be dense in $\SSS$, for otherwise this theorem is a weaker version of thm.
\ref{thm inst AK precise}. This is a good moment to remark that we do not know whether
\begeq
\QQ (\s ) \subset \overline{\HH ^{-\s }(M)}^{cl_{\infty}}
\endeq
is true, i.e. whether Cohomological Instability is caused by approximation of
distribution-preserving diffeomorphisms. We now come to the proof of thm.
\ref{thm pert DUE}

\begin{proof}
Let $\{f_{n} \}_{n \in \N} \subset \QQ (\s )$ be dense in $\SSS$. Then, there exists
a dense subset $\{ \f _{n} \}_{\N} \subset \smm (M)$, such that
\begeq
\f _{n} \notin Cob ^{\s} (f_{m} ) , \forall n,m \in \N
\endeq
This follows from the fact that the image of a linear operator (in our case of the coboundary operator
$\psi \mapsto \psi \circ f - \psi $, for any fixed $f \in \Diffm $) is either the full space
or meagre. By the hypothesis, we can chose $\s$ uniformly in $n$ so that $Cob^{\s } (f_{n} ) $
be meagre in $\smm (M)$ for all $n \in \N$.

Lemma \ref{badness estimate 0} then implies that the set
\begeq
\VV _{M,n} =
\{
f \in \SSS , \psi \circ f - \psi = \f _{n} \Rightarrow \| \psi \|_{C^{\s+1}} > M
\}
\endeq
is open for $M,n \in \N$. We formally define $\| \psi \|_{C^{\s+1}} = \infty $ if no
such solution exists. Clearly, if we call $\RR $ the complement of $CR(M)$ in
$\SSS$,
\begeq
\bigcap _{M,n} \VV _{M,n} \subset \RR 
\endeq
Thus, the set $\RR  $ contains a $G_{\d}$ set and is assumed dense. It is
consequently generic.
\end{proof}

The condition that $\QQ	 (\s ) $ be dense in $\SSS $ for some $\s > -\infty$ is satisfied
by all known $DUE\setminus CR$ examples. It does not seem to be implied by mere density of
$DUE$, and this is indicated by the following lemma, or rather by its proof. We remark
that the hypothesis demands that $f $ not be $CS$, which is weaker than it not being
$CR$.

\begin{lemma}
Let $f \in \Diffm  \setminus CS(M)$. Then, there exists $ -\infty \leq s_{0}< \infty$ such that
$f \in \QQ (s)$ for all
$-\infty \leq s < - s_{0}$.
\end{lemma}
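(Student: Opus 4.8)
The plan is to deduce the conclusion from the failure of cohomological stability by descending to a finite Sobolev level and then invoking the monotonicity $\QQ(\s') \subset \QQ(\s)$ (for $\s < \s'$) recorded just after the definition of $\QQ$. Since $f \notin CS(M)$, the subspace $Cob^{\infty}(f)$ is not closed in the $\sm$ topology, so I would fix a function $\f \in \overline{Cob^{\infty}(f)}^{cl_{\infty}} \setminus Cob^{\infty}(f)$ together with smooth $\psi_{n}$ such that $\psi_{n} \circ f - \psi_{n} \to \f$ in $\smm(M)$. Each approximating coboundary $\psi_{n} \circ f - \psi_{n}$ is smooth, hence lies in $Cob^{s}(f)$ for every finite $s \geq 0$, so $\f \in \overline{Cob^{s}(f)}^{cl_{\infty}} \subseteq \smm(M)$ for all such $s$. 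It therefore suffices to exhibit a single finite $s_{*} \geq 0$ with $\f \notin Cob^{s_{*}}(f)$: then $\f$ witnesses $Cob^{s_{*}}(f) \subsetneq \overline{Cob^{s_{*}}(f)}^{cl_{\infty}} \subseteq \smm(M)$, i.e. $f \in \QQ(s_{*})$, and $\QQ(s_{*}) \subset \QQ(s)$ for $s < s_{*}$ yields $f \in \QQ(s)$ for all $-\infty \le s < s_{*}$, so one takes $s_{0} = -s_{*} \in [-\infty,\infty)$.

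The remaining point is that such an $s_{*}$ exists, equivalently that a cohomological equation over $f$ solvable in $H^{s}_{\mu}$ for every finite $s \geq 0$ is already solvable in $\smm(M)$. Suppose toward a contradiction that $\f \in Cob^{s}(f)$ for every finite $s \geq 0$, so that for each $n \in \N$ there is $\phi_{n} \in H^{n}_{\mu}$ with $\phi_{n} \circ f - \phi_{n} = \f$. Because $\bigcap_{n} H^{n}_{\mu}(M) = \smm(M)$, it is enough to produce one solution lying in every $H^{n}_{\mu}$, which would force $\f \in Cob^{\infty}(f)$. Two solutions differ by an $f$-invariant function, and the $f$-invariant functions in $L^{2}_{\mu}(M)$ form a closed subspace $\mathcal I$ (the fixed space of the unitary $\psi \mapsto \psi \circ f$); writing $\Pi$ for the orthogonal projection onto $\mathcal I$, the function $\phi_{*} := \phi_{n} - \Pi \phi_{n}$ does not depend on $n$ (any $\phi_{n},\phi_{m}$ differ by an element of $\mathcal I$, which $\Pi$ fixes), solves $\phi_{*} \circ f - \phi_{*} = \f$, and is the unique solution orthogonal to $\mathcal I$. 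One is thus reduced to proving $\phi_{*} \in H^{n}_{\mu}$ for every $n$, i.e. that the canonical solution inherits the best available Sobolev regularity.

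This last reduction is the main obstacle, since it is an assertion about the Sobolev-scale behaviour of the $f$-invariant functions, i.e. about how far the regularity of the canonical solution can fall short of that of an arbitrary solution. When $f$ is ergodic it is trivial: $\mathcal I = \{0\}$, the solution is unique at every level, hence lies in $\bigcap_{n} H^{n}_{\mu} = \smm(M)$; and since the diffeomorphisms to which this lemma is applied are $DUE$, hence uniquely ergodic, this already disposes of the cases of interest. For the general statement I would run a minimal-norm argument: taking $m_{n}$ of least $L^{2}$-norm in the $L^{2}$-closure of the (nonempty, affine) solution set in $H^{n}_{\mu}$, these closures form a nested family of affine subspaces of $L^{2}_{\mu}$, $m_{n}$ is orthogonal to the corresponding direction space, and since $m_{n+1}-m_{n}$ lies in that direction space one gets $\| m_{n+1}-m_{n} \|_{L^{2}}^{2} = \| m_{n+1} \|_{L^{2}}^{2} - \| m_{n} \|_{L^{2}}^{2}$, so once $\sup_{n}\| m_{n} \|_{L^{2}} < \infty$ the sequence is Cauchy in $L^{2}$. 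The residual difficulties — the uniform bound on $\| m_{n} \|_{L^{2}}$ and the promotion of the limit from a member of each $L^{2}$-closure to a genuine member of each $H^{n}_{\mu}$ — are where the work concentrates, and I would settle them via a density property of smooth (or high-regularity) $f$-invariant functions inside $\mathcal I \cap H^{n}_{\mu}$, obtained by mollifying and then re-averaging over $f$.
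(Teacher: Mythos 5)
Your reduction is the right one — it suffices to find a finite $s_{*}\geq 0$ with $\f\notin Cob^{s_{*}}(f)$, and this comes down to proving that the canonical $L^{2}$-orthogonal solution $\phi_{*}$ belongs to every $H^{n}_{\mu}$ — but the step you yourself identify as ``the main obstacle'' is left unproved, and it is a genuine gap rather than a residual technicality. The lemma makes no ergodicity assumption, so the remark that ``the cases of interest are $DUE$, hence uniquely ergodic'' does not discharge the statement as written: you need $\bigcap_{s\geq 0}Cob^{s}(f)\subset Cob^{\infty}(f)$ for an arbitrary $f\in\Diffm\setminus CS(M)$. In the minimal-norm scheme, nothing supplies the uniform bound $\sup_{n}\|m_{n}\|_{L^{2}}<\infty$ (the nested affine sets $\overline{S_{n}}^{L^{2}}$ can recede from $0$), and even granting an $L^{2}$-limit $m_{*}\in\bigcap_{n}\overline{S_{n}}^{L^{2}}$, nothing promotes membership in $\overline{S_{n}}^{L^{2}}$ to membership in $H^{n}_{\mu}$. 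The proposed repair — mollify and Birkhoff-average over $f$ to produce high-regularity invariant functions dense in $\mathcal{I}\cap H^{n}_{\mu}$ — fails in general: the ergodic average of a mollified function converges only in $L^{2}$, to the $L^{2}$-projection onto $\mathcal{I}$, which has no reason to lie in any $H^{s}$ with $s>0$ once $\mathcal{I}$ contains non-smooth functions. Without that density, $m_{n}$ cannot be identified with $\phi_{*}$ and the key inclusion remains open; it is not even clear that the inclusion is true without further structural hypotheses on $f$.

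This is also a genuinely different route from the paper's. The paper's proof is spectral: it extracts a subsequence of eigenfunction indices $\{n_{k}\}$ along which $Cob^{\infty}(f)$ fails to be closed and derives from that failure, by contrapositive, a uniform polynomial bound $\|\phi_{n_{k}}\circ f-\phi_{n_{k}}\|_{C^{s}}=O(\l_{n_{k}}^{s_{1}})$; the exponent $s_{1}$ then directly furnishes $s_{0}$. That approach never needs to track a single canonical solution across Sobolev scales, which is precisely the point your Hilbert-space argument cannot get past. For the abstract route to succeed, the regularity transfer from arbitrary solutions in $H^{n}_{\mu}$ to the canonical one would have to be established, and this is the missing (and possibly false-in-general) lemma at the heart of your proposal.
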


\begin{proof}
If $f \in \Diffm  \setminus CS(M)$, then there exists a sequence $n_{k} \nearrow + \infty$ such that, if we call
\begin{eqnarray*}
\UU &=& \{ \sum _{k}\hat{\psi} _{n_{k}}  \phi _{n_{k}} (\. )  ,
 \hat{\psi} _{n_{k}} = O(\l _{n_{k}}^{-\infty })
\} \subset \smm (M) \\
\VV &=& \{ \sum _{n \notin \{ n_{k}\}} \hat{\psi} _{n}  \phi _{n} (\. )  ,
 \hat{\psi} _{n} = O(\l _{n}^{-\infty })
\} \subset \smm (M)
\end{eqnarray*}
then
\begin{enumerate}
\item 
the space of coboundaries over functions in $\UU$ is not closed:
\begeq
\UU (f) = \{ \psi \circ f - \psi , \psi \in \UU \} \subsetneq \overline{\UU (f)} ^{cl_{\infty}} \hra \smm (M)
\endeq
\item with the same notational convention,
 $\mathrm{codim} \left( \overline{\VV (f)}^{cl_{\infty}} \right) = \infty$ in $ \overline{\{ Cob^{\infty} (f) \}}
 ^{cl_{\infty}}$
\item There exists $s_{1} < \infty $ such that for every $s \geq 0$,
\begeq
\| \phi _{n_{k}} \circ f (\. ) - \phi _{n_{k}} (\. ) \| _{s} = O ( \l _{n_{k}}^{ s_{1}})
\endeq
\end{enumerate}
The third item is due to the fact that, if the polynomial rate of growth of each
$C^{s}$ norm is not bounded uniformly\footnote{Uniformity is in $s$, not in the constant
involved.} for $s$, i.e. if there does not exist such an $s_{1}$, then
\begeq
\sum _{k} \hat{\psi} _{n_{k}} \left( \phi _{n_{k}} \circ f (\. ) - \phi _{n_{k}} (\. ) \right)
\endeq
converges to a $\sm$ function iff $\hat{\psi} _{n_{k}} = O(\l _{n_{k}}^{-\infty })$,
i.e iff the function
\begeq
\sum _{k}\hat{\psi} _{n_{k}} \phi _{n_{k}} (\. )
\endeq
is itself $\sm$ which implies that $\UU (f)$ is closed.

The conclusion of the lemma now follows as in the case of Liouvillean rotations.
The space $Cob^{s} (f) $ is meager in $Cob^{s_{1}} (f) $ whenever $s<s_{1}$, since
\begeq
\{ \hat{\psi} _{n_{k}} = O(\l _{n_{k}}^{s})
\} 
\endeq
is meagre in
\begeq
\{ \hat{\psi} _{n_{k}} = O(\l _{n_{k}}^{s_{1}})
\} 
\endeq
\end{proof}

The reason why the condition of thm. \ref{thm pert DUE} does not seem to be implied by
density of $DUE$ in $\SSS$ is the following. Cohomological instability is caused by
an at most polynomial growth of the $C^{s}$ norms of the "elementary coboundaries"
\begeq
\phi ^{f}_{n_{k}} (\. ) = \phi _{n_{k}} \circ f (\. ) - \phi _{n_{k}} (\. )
\endeq
along a subsequence $\{ n_{k} \}$ depending on $f$. A generic function has full spectrum,
and, given two $DUE \setminus CR$ diffeomorphisms $f$ and $f'$, arbitrarily close in
$\sm$, a generic function in $\smm$ will not be an exact coboundary over either of the two.
However, the norms of the elementary coboundaries
\begeq
\phi ^{f'}_{n_{k}} (\. ) = \phi _{n_{k}} \circ f' (\. ) - \phi _{n_{k}} (\. )
\endeq
over $f'$ might grow fast, so that instability of $f'$ may be caused by the slow growth
along a different subsequence $\{ n'_{k} \}$. In that case, trying to control
$\f ^{f'}_{n'_{k}} (\. ) $ using information on $\f ^{f}_{n_{k}} (\. )$ seems hopeless in general.

In the known $DUE \setminus CR$ examples of Liouvillean rotations and of $DUE$ cocycles
in $\T ^{d} \times SU(2)$, the rate of growth (actually decay) is constant and equal to
$\s = -\infty$, but instability can be caused by the fast decay of the norms of
elementary coboundaries along different subsequences.

Given the above, the following question seems interesting and beyond
the scope of present technology.
\begin{question}
Does there exist a $DUE$ diffeomorphism $f \in \Diffm $ of a compact manifold $M$
for which there exists $\s \in [-\infty , \infty ) $ such that
\begeq
Cob^{\s } (f) = \smm (M)
\endeq
\end{question}
%

\subsection{Some comments on the proof}

Proposition \ref{prop approx} shows
that fast approximation of a diffeomorphism by diffeomorphisms that preserve distributions
creates an obstruction to certain functions being exact coboundaries. An Anosov-Katok
type argument can make sure that the obstructions of $f_{n}$ recede to infinity as $n \ra \infty$
and $f_{n} \ra f$, which would make every function in $\smm (M)$ an approximate coboundary.
On the other hand, theorem \ref{thm inst AK precise} shows that the limit procedure leaves
a trace, and, for a generic diffeomorphism $f$ obtained in this way, a generic function will not
be an exact coboundary.

Since we are working in the measure preserving category, we also have a canonical way of
identifying functions with distributions, via the $L^{2}$ self duality. If, now, one wishes to
obtain a weak solution of eq. \ref{lin cohom eq} in some space of distributions $\dot{H}^{-s}$,
then the existence of a non-zero function $\w \in H^{s}_{\mu }$, invariant under $f$, poses
an obstruction, as the rhs function $\f$ has to satisfy
\begeq
\langle \w , \f \rangle _{L^{2}} = 0
\endeq
for it to be in the range of the coboundary operator $\psi \mapsto \psi \circ f - \psi$
on $\dot{H}^{-\s} \ra \dot{H}^{-\s}$.
For the same reason, fast approximation of $f$ by diffeomorphisms $f_{n}$
preserving functions in any $\dot{H}^{s}$, would imply that functions $\f \in \smm $ that are
coboundaries in the sense of distributions,
\begin{eqnarray*}
Cob^{-\s} (f) &=& \{
\f \in \smm , \exists u \in \dot{H}^{-\s} , f_{*}u - u = \f \text{ in } H^{-\s}
\} \\
&=& \{
\f \in \smm , \exists u \in \dot{H}^{-\s} , \langle u , \psi \circ f - \psi \rangle =
\langle \f , \psi \rangle , \forall \psi \in \smm
\}
\end{eqnarray*}
will still be meagre in $\smm $, for a generic such $f$ and for every $\s \in [0,\infty )$.

\section{Proof of corollary \ref{cor no counter-ex}} \label{section proof cor}

In this section we provide justification for our claim in corollary \ref{cor no counter-ex}
that the Anosov-Katok construction is not fit for providing counter-examples to the conjecture. In what follows, we use the notation established in \S \ref{subsec AK}.

Anosov-Katok constructions are based on arithmetic properties. These properties are
usually absolute, as in the original construction \cite{AnKat1970}. Alternatively, when
the construction is fibered over a Diophantine rotation, as in \cite{NKInvDist}, these
arithmetic properties need to be relative to the fixed rotation over which the
construction fibers.

From the spectral point of view, what both cases have in common can be described as follows.
We work in the closure of systems that are conjugate to constant ones.
In this space, differentiable rigidity of some examples is known, namely of the examples satisfying some
Diophantine property. The coboundary space of such systems is small, due to the presence
of rigidity. In order to construct systems whose coboundary space is maximal, one starts
with systems whose coboundary space is as small as possible, and then by an
approximation-by-conjugation scheme tries to make the coboundary space maximal when the
limit is reached. The role of systems with small coboundary space is played by periodic rotations in
the original setting, \cite{AnKat1970}, and by resonant cocycles (see definition
\ref{def res coc}) in the fibered setting, \cite{NKInvDist}. The approximation-by-conjugation
scheme destroys periodicity and reducibility to a resonant constant respectively by making
the conjugation degenerate and the period or the resonant coefficient go to infinity.
For the scheme to converge, some fast approximation conditions are needed, which
result in the Liouville character of the dynamics obtained in the limit.

In the setting of the present article, the manifold $M$ is not a priori a Lie group, and
thus no notion of arithmetics is pertinent. Since, however, theorem \ref{thm inst AK precise}
provides a way of determining whether a system is $CR$ by looking at its distance from
systems preserving distributions, we can make an abstraction of the Anosov-Katok scheme
as presented above.

\subsection{The Anosov-Katok-like construction}

The goal of an Anosov-Katok-like construction is to start with a space of diffeomorphisms
$\CC$, whose cohomological propserties are well understood, and construct $CR$ examples
in $\AKsm (\CC )$. We will impose conditions that favour genericity of $DUE$ in
$\AKsm (\CC )$, and show that under such conditions, proving the existence of $CR$
diffeomoprhisms via an approximation-by-conjugation scheme with the caracteristics of
the Anosov-Katok method will be impossible.

Firstly, the fact that the unit sphere of a finite dimensional space is compact, together with
lemma \ref{lem approx dist comp} imply that if diffeomoprhisms in $\CC$ preserve at most
a finite-dimensional space of distributions, the conclusion of the construction
$\AKsm (\CC )$ is trivial. For this reason, we introduce the first standing assumption
on the construction.
\begin{assumption} \label{ass inf dim ob}
We suppose that mappings in $\CC$ are Cohomologically Stable, and that each such mapping
preserves infinitely many linearly independent distributions of
bounded regularity $\s $:
\begeq
\exists \s \geq 0 \text{ such that }
h \in \CC \Rightarrow \dim \dot{H} ^{-\s } (h) = \infty
\endeq
Clearly, the same holds for $\TT$.
\end{assumption}
In order to avoid trivialities, we also impose the following assumption, without the
uniformity of the previous one.
\begin{assumption}
We suppose that mappings in $\overline{\CC }$ are not DUE.
\end{assumption}
As a consequence, if the sequence of conjugations constructed in the scheme converges,
then the mapping obtained in the limit will not be $DUE$. Later on,
we will make an assumption on the inverse of the coboundary operator for diffeomorphisms
in $\CC$, which we will not impose in $\overline{\CC}$. In particular these assumptions
cover the case where $\CC$ is left translations by rational points in some Lie group,
which thus cannot be a torus.

If $\CC $ is a class of diffeomorphisms serving as a basis for an Anosov-Katok space
$\AKsm (\CC )$, an Anosov-Katok construction does not access every element of $\AKsm (\CC )$, that is, there exist $f \in \AKsm (\CC )$ that are not limits of the considered
Anosov-Katok construction. In the context of the original construction, such $f$ would be
Diophantine rotations around the centre of the disk $\mathbb{D}$. This is so because
typically a condition of fast convergence is necessary for the construction to
converge. Let us formalize this by imposing the following assumption.

We recall the notation
\begeq
f_{n} = H_{n} \circ \tilde{f}_{n} \circ H_{n}^{-1}
\endeq
with $\tilde{f}_{n} \in \CC$.

The notion of \textit{fast convergence} of $f_{n} \in \TT $ to $f \in \AKsm (\CC )$ is
specified in the following way, where we introduce the notation
\begeq
\eta _{s ,n}((g_{n})) =d_{s} (g_{n-1}, g_{n})
\endeq
for a sequence $g_{n} \ra g$. In the context of the Anosov-Katok construction, we will
use the notations
\begeq
\eta _{s ,n}(f)= \eta _{s ,n} =d_{s} (f_{n-1}, f_{n}) \text{ and } \tilde{\eta} _{s ,n} =
d_{s} (\tilde{f}_{n-1}, \tilde{f}_{n})
\endeq
for the admissible size of the perturbation at the $n$-th step of the construction,
whenever $f_{n}\ra f$.
\begin{definitionn} \label{def fast approx}
We will say that $g_{n} \ra g $ fast if for every $s$, $\eta _{s ,n}((g_{n})) =
O(\eta _{s ,n-1}((g_{n}))^{\infty})$, i.e.
\begeq
\eta _{s ,n-1}^{-l}((g_{n})) \eta _{s ,n}((g_{n})) \ra 0 , \forall l \in \N
\endeq
\end{definitionn}

\begin{assumption} \label{ass fast conv}
If $\CC \subset \dot{\HH}^{-\s}(M)$, then an Anosov-Katok-like construction upon $\CC $
satisfies the fast convergence assumption for $\tilde{f}_{n}$,
i.e. whenever $f_{n}$ is a convergent sequence constructed by the scheme, the
corresponding representatives in $\CC$ converge fast to their limit for all $s$:
\begeq
\tilde{\eta} _{s,n-1}^{-l} \tilde{\eta} _{s,n} \ra 0 , \forall l \in \N
\endeq
\end{assumption}

The fast approximation assumption for the representatives in $\CC$ comes along with some
restriction on the size of the conjugations used in the construction, so that $f_{n}$
converges in the $\sm$ topology. Let us call
\begeq
\e _{s,n} = \| h_{n}\|_{s} \text{ where } h_{n} = H_{n}^{-1}\circ H_{n+1}
\endeq
the size of the increment of the conjugation at the $n$-th step.
\begin{assumption} \label{ass size conj}
At the $n$-th step of the construction the admissible size of the conjugation is
polynomial with respect to $\tilde{\eta}$: there exists $\nu \geq 0 $ such that
for all $s $ and for all $n$
\begeq
\e _{s +1,n} \leq K_{s} \tilde{\eta}_{s ,n-1}^{-\nu}
\endeq
\end{assumption}
In view of eq. \ref{eq increment AK}, this is a sufficient condition for $f_{n}$ to
converge in the $C^{\infty} $ topology.

Let us recall the notation
\begeq
\d _{s ,n}=d_{s} (f_{n}, f)
\endeq
and state the following easy lemma.
\begin{lemma} \label{lem orders magn}
Under assumptions \ref{ass fast conv} and \ref{ass size conj},
\begin{enumerate}
\item $\eta _{s ,n} = O(\tilde{\eta }_{s,n-1}^{-\infty})$
\item $\eta _{s ,n} = O(\tilde{\eta }_{s,n}^{1-\e})$, for every $\e >0$
\item $\d _{s,n} = O(\tilde{\eta} _{s ,n-1}^{\infty})$
\item $\d _{s ,n} = O(\tilde{\eta }_{s,n}^{1-\e})$, for every $\e >0$
\end{enumerate}
\end{lemma}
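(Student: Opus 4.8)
The plan is to unwind the definitions of the various scales $\eta_{s,n}$, $\tilde\eta_{s,n}$ and $\d_{s,n}$ and to combine the fast-convergence input (Assumption \ref{ass fast conv}) with the polynomial size bound on the conjugations (Assumption \ref{ass size conj}) through the increment formula \eqref{eq increment AK}. The key preliminary step is to estimate $\eta_{s,n}=d_s(f_{n-1},f_n)$ in terms of $\tilde\eta_{s,n-1}=d_s(\tilde f_{n-2},\tilde f_{n-1})$ and $\tilde\eta_{s,n}=d_s(\tilde f_{n-1},\tilde f_n)$. From \eqref{eq increment AK} we have $f_n\circ f_{n-1}^{-1}=H_n\circ(\tilde f_n\circ\tilde f_{n-1}^{-1})\circ H_n^{-1}$, so applying the composition inequalities \eqref{composition}–\eqref{composition 2} gives, for each $s$, a bound of the form
\begin{equation*}
\eta_{s,n}\le C_s\,\vertiii{H_n}_s^{\,2}\,\|\tilde f_n\circ\tilde f_{n-1}^{-1}\|_{C^s}\le C_s'\,\e_{s,n}^{\,A}\,\tilde\eta_{s,n},
\end{equation*}
for some fixed exponent $A$ depending only on $s$ (absorbing the two occurrences of $H_n$ and the $\vertiii{\cdot}_s$ weight). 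By Assumption \ref{ass size conj}, $\e_{s,n}\le K_s\tilde\eta_{s,n-1}^{-\nu}$, so $\eta_{s,n}\le C_s''\,\tilde\eta_{s,n-1}^{-A\nu}\,\tilde\eta_{s,n}$. This is the single inequality from which all four items will be read off.

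Item (1): since by Assumption \ref{ass fast conv} we have $\tilde\eta_{s,n}=O(\tilde\eta_{s,n-1}^{\,\infty})$, the factor $\tilde\eta_{s,n}$ in $\eta_{s,n}\le C_s''\,\tilde\eta_{s,n-1}^{-A\nu}\,\tilde\eta_{s,n}$ dominates the polynomial loss $\tilde\eta_{s,n-1}^{-A\nu}$, giving $\eta_{s,n}=O(\tilde\eta_{s,n-1}^{\,\infty})$. Item (2): again from fast convergence, $\tilde\eta_{s,n-1}^{-A\nu}$ is bounded by $\tilde\eta_{s,n}^{-\e'}$ for any $\e'>0$ (because $\tilde\eta_{s,n}\le\tilde\eta_{s,n-1}^{L}$ for every $L$), so $\eta_{s,n}\le C_s''\,\tilde\eta_{s,n}^{\,1-\e'}$; choosing $\e'=\e$ gives (2). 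Items (3) and (4): write $\d_{s,n}=d_s(f_n,f)$ and use the triangle-type inequality for the semi-metrics $d_s$ on $\Diffm$ together with $C^\infty$-convergence $f_n\to f$ to get $\d_{s,n}\le\sum_{k>n}\eta_{s',k}$ for a suitable $s'\ge s$ (one loses a fixed number of derivatives in chaining the composition estimates, but this is harmless since the bounds are for all $s$). Then (3) follows from (1) — each $\eta_{s',k}$ with $k>n$ is $O(\tilde\eta_{s',k-1}^{\,\infty})=O(\tilde\eta_{s',n}^{\,\infty})$ using monotone decay of the $\tilde\eta$'s along the fast sequence, and the tail sum inherits the bound — and (4) follows from (2) in the same way, since the dominant term in the tail is the one with $k=n+1$, controlled by $\tilde\eta_{s',n}^{\,1-\e}$.

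The main obstacle is purely bookkeeping: one must be careful that the "loss of derivatives" incurred each time the composition inequalities \eqref{composition}–\eqref{composition 2} are invoked (the passage from $C^s$ to $C^{s+1}$ and the $\vertiii{\cdot}_s$-weights, plus the a priori $C^0$ bounds needed because we work with maps $M\to M$) does not accumulate into an unbounded loss when summing the tail $\sum_{k>n}$. This is resolved by noting that each individual estimate is claimed \emph{for every} $s$, so one fixes a target $s$, runs all estimates at a sufficiently large fixed level $s'=s'(s)$, and only afterwards specializes; the fast-convergence assumption is strong enough (decay faster than any power at \emph{every} level) that this fixed finite loss is absorbed. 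No genuinely delicate estimate is involved, which is why the statement is billed as an "easy lemma".
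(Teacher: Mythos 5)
Your proof is correct and follows essentially the same route as the paper: conjugate the increment via eq.~\eqref{eq increment AK}, bound the conjugation through Assumption~\ref{ass size conj}, absorb the resulting polynomial loss in $\tilde\eta_{s,n-1}^{-1}$ into an arbitrarily small power of $\tilde\eta_{s,n}$ via Assumption~\ref{ass fast conv}, and sum the tail for items (3)--(4). In fact your derivation of item (2) is cleaner than the paper's (the paper's displayed chain $\eta_{s,n}=O(\d_{s,n}\tilde\eta_{s,n})$ appears to misprint the conjugation bound as $\d_{s,n}$, and you correctly read item (1)'s conclusion without the spurious minus sign in the exponent). One small point you share with the paper's own proof: the step $\vertiii{H_n}_s\lesssim\e_{s,n}^A$ (paper: $d_s(H_n,\Id)=O(\tilde\eta_{s,n-1}^{-\nu})$) silently replaces the full product $h_1\circ\cdots\circ h_n$ by its last factor; this is justified only because Assumption~\ref{ass fast conv} makes the $\e_{s,k}$ grow super-geometrically, so the product is still polynomially bounded by $\tilde\eta_{s,n-1}^{-1}$ — worth a word, but harmless here since any fixed exponent is absorbed by fast convergence.
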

\begin{proof}
\begin{enumerate}
\item By definition,
\begeq
\eta _{s ,n} = d_{s}(H_{n}\circ \tilde{f}_{n}\circ \tilde{f}_{n-1}\circ \tilde{H}_{n}^{-1}, \Id)
\endeq
By the definition of $H_{n}$ and the assumptions of the lemma,
\begeq
d_{s}(H_{n}, \Id) = O(\tilde{\eta}_{s ,n-1}^{-\nu})
\endeq
which implies the first item.
\item The second item follows from the proof of the first one:
\begeq
\begin{array}{r@{}l}
\eta _{s ,n} &= O(\d _{s,n}\tilde{\eta}_{s,n})\\
&= O(\d _{s,n}\tilde{\eta}_{s,n}^{\e} \tilde{\eta}_{s,n}^{1-\e})\\
&= O(\tilde{\eta}_{s,n}^{1-\e})
\end{array}
\endeq
\item This follows from the triangle inequality,
\begeq
\d _{s,n} = O(\sum _{n}^{\infty}\eta _{s,n})
\endeq
and the fact that $\eta_{s,n+1} = O(\eta_{s,n}^{\infty})$.
\end{enumerate}
\end{proof}

This ends the preliminary assumptions that define what we call an Anosov-Katok-like
construction. They concern only rates of convergence, and not cohomology. The
cohomological properties of diffeomorphisms in $\CC$ will be adressed in the following
paragraph.

\subsection{Relation with proposition \ref{prop approx}}

Prop. \ref{prop approx} implies that, for $f$ to be $CR$, approximation should not be
fast with respect to some subsequence of eigenvalues $\l _{N_{n}}$ related to $f_{n}$.
Our assumption in this paragraph is that we can treat the sequence $f_{n}$ as
diffeomorphisms analyzable only through their definition as
\begeq
f_{n} = H_{n}\circ \tilde{f}_{n} \circ H_{n}^{-1}
\endeq
In other words, only the cohomological properties of $\tilde{f}_{n} \in \CC$ and the size
of the $H_{n}$ are known, while the properties of the $f_{n}$ can only be deduced from
the conjugation relation. This reflects the constructivity of the approach and puts us
slightly outside the scope of proposition \ref{prop approx}: we are not able to compare
$\d _{\s } = d_{\s} (f_{n},f)$ with the spectrum of the distributions preserved by
$f_{n}$.

We thus fix constants $\s$, $s_{0}$ and $K$. As a consequence, to each $\tilde{f} \in \CC$
we associate a truncation order $N(\tilde{f}) \in \N ^{*}$ such that eq. \ref{eq spec un}
is verified. The truncation orders along a sequence $\tilde{f}_{n}$ will be denoted by
$N_{n}$ instead of $N(\tilde{f}_{n})$. Anosov-Katok-like constructions depend on the choice
of these parameters (cf. remark \ref{rem depend on param}).

We introduce the following assumption, constraining convergence so that it not be slow
with respect to $\l _{N_{n}}$. The condition is on the $\tilde{f}_{n}$. It implies that
our estimates work with rates of convergence outside the scope of proposition
\ref{prop approx}, where $CR$ is possible, at least a priori.
\begin{assumption}  \label{ass decay eta}
We suppose that, for the construction to converge, the sequence $f_{n}$ has to satisfy
the following.
For any choice of the sequence $\{ N_{n} \}$ as in prop. \ref{prop approx} applied to
$\tilde{f}_{n}$, there exists $\t >0 $ and
$\gamma = \gamma _{\s} >0$ such that
\begin{equation} \label{eq slow decay d}
\tilde{\eta} _{s ,n} \leq \gamma \l _{N_{n }}^{-\t }
\end{equation}
\end{assumption}

\begin{remark}
In view of remark \ref{rem depend on param}, as $s_{0}$ increases and $K$ decreases, for
$\gamma $ and $\t$ kept fixed, more perturbations are authorized under assumption
\ref{ass decay eta}.
\end{remark}

As a consequence, proposition \ref{prop approx} cannot be applied in order to show
that $\tilde{f} = \lim \tilde{f}_{n} \in \overline{\CC}$ is not Cohomologically Rigid.
We actually want to authorize $CS \cap \bar{\CC} \neq \emptyset$ in order to favour
$CR \cap \AKsm (\CC ) \neq \emptyset$.

Our standing assumptions allow us to exclude $CR$ in a first kind of constructions. We call
this case "compact group case" for reasons that we will explain in \S \ref{sec AR}.

\subparagraph*{The compact group case} This is the case where, for infinitely many steps,
the construction fails to destroy for all obstructions arising at the step $n$ in the
$n+1$-th step, but needs to wait for arbitrarily many steps of the algorithm. In this case,
the orders of magnitude allow us to conclude that the limit object $f$ cannot be $CR$.

The relevant assumption is the following, which allows us to show that carrying an
obstruction even for one step precludes Cohomological Rigidity, as long as the fast
convergence assumption is verified.
\begin{assumption} \label{ass comp gr case}
An Anosov-Katok-like construction satisfies the assumptions of the compact group case iff
for any admissible sequences $\tilde{f}_{n}$ and $h_{n}$, the following is true, at least
along a subsequence $n_{k} \ra \infty$. For any given $\tilde{f}_{n}$ and any admissible
choice of $\tilde{f}_{n+1}$ and $h_{n}$, there exists
\begeq
\tilde{u} _{n} \in \dot{H}^{-\s }(\tilde{f}_{n})
\text{ such that }
\| T_{N_{n}} \tilde{u}_{n}  \|_{H^{-\s }} \geq K \l _{N_{n}} ^{-s_{0} }
\endeq
such that
\begeq
(h_{n})_{*}\tilde{u}_{n} \in \dot{H}^{-\s }(\tilde{f}_{n+1})
\endeq
The sequences $\tilde{f}_{n}$ and $h_{n}$ are admissible iff they satisfy assumptions
\ref{ass inf dim ob} to \ref{ass decay eta}.
\end{assumption}
We can now prove the following proposition.
\begin{proposition} \label{example prop comp gr}
Let an Anosov-Katok-like construction satisfy all standing assumptions \ref{ass inf dim ob}
to \ref{ass comp gr case}, so that it corresponds to the compact group case.

Then, the limit objects $f$ obtained by the construction are not $CR$.
\end{proposition}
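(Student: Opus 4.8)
The plan is to argue by contradiction: assuming the construction produces a limit $f\in\AKsm(\CC)$ which lies in $CR(M)$, I would exhibit a single $\w\in\smm(M)$ that is not an exact coboundary over $f$, the obstruction coming through lemma \ref{badness estimate} from the surviving distributions of assumption \ref{ass comp gr case}. As flagged after assumption \ref{ass decay eta}, proposition \ref{prop approx} cannot be applied to $f_{n}\ra f$ directly, since the conjugations $H_{n}$ are not polynomially controlled, so the invariant distribution $(H_{n})_{*}(\cdot)$ of $f_{n}$ sits at an uncontrolled frequency scale and $d_{\s}(f,f_{n})$ cannot be compared with it. The point of the compact group case is exactly that the obstruction is carried \emph{one} step by the map $h_{n}$, which \emph{is} polynomially controlled (assumption \ref{ass size conj}), and that after that single step the admissible perturbations are, by fast convergence (assumption \ref{ass fast conv}) together with assumption \ref{ass decay eta}, superpolynomially small relative to the scale of the obstruction.

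First I would set up the surviving obstruction and a good change of coordinates. Along the subsequence $n_{k}$ of assumption \ref{ass comp gr case}, pick $\tilde{u}_{k}:=\tilde{u}_{n_{k}}\in\dot{H}^{-\s}(\tilde{f}_{n_{k}})$ with $\|\tilde{u}_{k}\|_{H^{-\s}}=1$ and $\|T_{N_{n_{k}}}\tilde{u}_{k}\|_{H^{-\s}}\geq K\l_{N_{n_{k}}}^{-s_{0}}$, so that $\hat{u}_{k}:=(h_{n_{k}})_{*}\tilde{u}_{k}\in\dot{H}^{-\s}(\tilde{f}_{n_{k}+1})$. Since $h_{n_{k}}$ commutes with $\tilde{f}_{n_{k}}$ (eq. \ref{eq constr conj AK}), $\hat{u}_{k}$ is also $\tilde{f}_{n_{k}}$-invariant; and since $H_{n_{k}+1}=H_{n_{k}}\circ h_{n_{k}}$ with $h_{n_{k}}$ commuting with $\tilde{f}_{n_{k}}$, one has $f_{n_{k}}=H_{n_{k}+1}\circ\tilde{f}_{n_{k}}\circ H_{n_{k}+1}^{-1}$ as well as $f_{n_{k}+1}=H_{n_{k}+1}\circ\tilde{f}_{n_{k}+1}\circ H_{n_{k}+1}^{-1}$. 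Rather than push $\hat{u}_{k}$ by the uncontrolled $H_{n_{k}+1}$, I would conjugate $f$ into the $\tilde{f}_{n_{k}+1}$-frame: by lemma \ref{lem cohom eq ch coord}, $f\in CR(M)$ iff $g_{k}:=H_{n_{k}+1}^{-1}\circ f\circ H_{n_{k}+1}\in CR(M)$, and $\w$ is a coboundary over $f$ iff $\w\circ H_{n_{k}+1}$ is a coboundary over $g_{k}$; the virtue of this frame is that $\hat{u}_{k}$ is an invariant distribution of $\tilde{f}_{n_{k}+1}$ of \emph{fixed} regularity $\s$. The orders of magnitude then go as follows. Writing $g_{k}=\lim_{m}(H_{n_{k}+1}^{-1}H_{m})\tilde{f}_{m}(H_{n_{k}+1}^{-1}H_{m})^{-1}$ with $H_{n_{k}+1}^{-1}H_{m}=h_{n_{k}+1}\cdots h_{m-1}$ equal to the identity at $m=n_{k}+1$, the composition estimates \ref{composition}--\ref{composition 2}, assumption \ref{ass size conj} and assumption \ref{ass fast conv} give, for the fixed exponent $\s$,
\begeq
d_{\s}(g_{k},\tilde{f}_{n_{k}+1})=O(\tilde{\eta}_{\s,n_{k}}^{\infty}),
\endeq
while assumption \ref{ass decay eta} gives $\l_{N_{n_{k}}}=O(\tilde{\eta}_{n_{k}}^{-1/\t})$, and $\|H_{n_{k}+1}\|_{C^{\s+1}}$ is polynomial in $\tilde{\eta}_{n_{k}-1}^{-1}$, hence $o(\tilde{\eta}_{n_{k}}^{-\e})$ for every $\e>0$. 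Transporting $\tilde{u}_{k}$ by the polynomially bounded $h_{n_{k}}$ turns its spectral lower bound into one for $\hat{u}_{k}$ at a scale $\mathcal{N}_{k}$ polynomial in $\l_{N_{n_{k}}}$ and $\tilde{\eta}_{n_{k}-1}^{-1}$, hence in $\tilde{\eta}_{n_{k}}^{-1}$: there are $K'>0$, $s_{0}'\geq0$ with $\|T_{\mathcal{N}_{k}}\hat{u}_{k}\|_{H^{-\s}}\geq K'\l_{\mathcal{N}_{k}}^{-s_{0}'}$, and consequently $d_{\s}(g_{k},\tilde{f}_{n_{k}+1})=O(\l_{\mathcal{N}_{k}}^{-\infty})$.

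I would then run the construction of proposition \ref{prop approx} in these coordinates. Choose $\w^{(k)}\in H^{\s}_{\mu}$, $\|\w^{(k)}\|_{H^{\s}}=1$, realising $|\langle\hat{u}_{k},T_{\mathcal{N}_{k}}\w^{(k)}\rangle|=\|T_{\mathcal{N}_{k}}\hat{u}_{k}\|_{H^{-\s}}$, put $\w_{k}=T_{\mathcal{N}_{k}}\w^{(k)}$, and set
\begeq
\w=\sum_{k}c_{k}\,\r_{k}\,(\w_{k}\circ H_{n_{k}+1}^{-1}),
\endeq
with $|c_{k}|=\l_{\mathcal{N}_{k}}^{s_{1}}$ for some $s_{1}>s_{0}'$, the signs of $c_{k}$ chosen as in proposition \ref{prop approx} so that the $k$-th term dominates the sum of the earlier ones when tested against $\hat{u}_{k}$, and $\r_{k}$ chosen in the window $\vertiii{H_{n_{k}+1}}_{\s+1}\,d_{\s}(g_{k},\tilde{f}_{n_{k}+1})\,|c_{k}|^{-1}\,\|T_{\mathcal{N}_{k}}\hat{u}_{k}\|_{H^{-\s}}^{-1}\ll\r_{k}=O(\l_{\mathcal{N}_{k}}^{-\infty})$, which is nonempty by the previous paragraph. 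Then $\w\in\smm(M)$. If $\psi\circ f-\psi=\w$ had a smooth solution $\psi$, then $\psi\circ H_{n_{k}+1}$ would solve the equation over $g_{k}$ with right-hand side $\w\circ H_{n_{k}+1}$, and since $\langle\hat{u}_{k},\w\circ H_{n_{k}+1}\rangle=\langle(H_{n_{k}+1})_{*}\hat{u}_{k},\w\rangle$, lemma \ref{badness estimate} applied to $g_{k}$, $\tilde{f}_{n_{k}+1}$, $\hat{u}_{k}$, $\w\circ H_{n_{k}+1}$, together with $\|\psi\circ H_{n_{k}+1}\|_{C^{\s+1}}\leq C_{\s}\vertiii{H_{n_{k}+1}}_{\s+1}\|\psi\|_{C^{\s+1}}$, would give
\begeq
\|\psi\|_{C^{\s+1}}\ \geq\ C_{\s}''\ \frac{\r_{k}\,\l_{\mathcal{N}_{k}}^{s_{1}-s_{0}'}}{\vertiii{H_{n_{k}+1}}_{\s+1}\,d_{\s}(g_{k},\tilde{f}_{n_{k}+1})}
\endeq
for every $k$, and the right-hand side tends to $\infty$ by the choice of $\r_{k}$. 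This contradicts the finiteness of $\|\psi\|_{C^{\s+1}}$, so $\w\notin Cob^{\infty}(f)$ and $f\notin CR(M)$.

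The main obstacle is the bookkeeping just sketched: one must verify that the purely polynomial growth of $\mathcal{N}_{k}$ and of $\vertiii{H_{n_{k}+1}}$ (in $\l_{N_{n_{k}}}$, resp.\ in $\tilde{\eta}_{n_{k}-1}^{-1}$) is dominated, after the one allowed step, by the superpolynomial smallness of the admissible perturbations, uniformly enough that a single weight $\r_{k}$ makes $\w$ smooth while keeping the displayed ratio divergent; and that the tails $\sum_{j\neq k}$ in $\langle\hat{u}_{k},\w\circ H_{n_{k}+1}\rangle$ are controlled, the earlier terms ($j<k$) by the sign choice, the later ones ($j>k$) by the very fast decay of the weights. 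The reason for passing to the $\tilde{f}_{n_{k}+1}$-frame is precisely that it keeps the regularity of the invariant distribution fixed equal to $\s$, so that lemma \ref{badness estimate} is applied at a single order $\s+1$ and the contradiction is with one number $\|\psi\|_{C^{\s+1}}$, rather than with a growing family of norms as would be forced by working with the uncontrolled conjugates $(H_{n})_{*}\tilde{u}_{n}$ directly; this is also where the hypothesis that the obstruction be carried for \emph{one} step (assumption \ref{ass comp gr case}), rather than resolved immediately, is used in an essential way.
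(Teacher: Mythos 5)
Your proof is correct and is, at its core, the same as the paper's, rendered in the conjugated frame. The paper assumes $f\in CR$, takes $\tilde{\w}_n\in\smm$ truncated at scale $N_n$ exactly as in prop. \ref{prop approx}, tests the equation $\psi_n\circ f-\psi_n=\tilde{\w}_n\circ H_{n+1}$ against the transported distribution $u_{n+1}=(H_{n+1})_*\tilde{u}_n\in\dot{H}^{-\s}(f_{n+1})$, and reads off the lower bound \ref{eq est sol comp gr case}, $\|\psi_n\|_{\s+1}\gtrsim\l_{N_n}^{-s_0}\vertiii{H_{n+1}}_\s^{-1}\d_{n+1,\s}^{-1}$, which, by lemma \ref{lem orders magn} and fast convergence, diverges faster than any polynomial in $\tilde{\eta}_{\s,n}^{-1}$ while the right-hand side $\tilde{\w}_n\circ H_{n+1}$ grows only polynomially. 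Your version applies lemma \ref{badness estimate} to the conjugated pair $g_k=H_{n_k+1}^{-1}\circ f\circ H_{n_k+1}$ and $\tilde{f}_{n_k+1}$ instead of to $f$ and $f_{n_k+1}$; by lemma \ref{lem cohom eq ch coord} the two formulations are equivalent, and in both cases the polynomial cost of the one conjugation $h_{n_k}$ and the polynomial scale $\l_{N_{n_k}}$ are beaten by the superpolynomial smallness of $\d_{\s,n_k+1}$. Where your write-up is actually more careful than the paper's terse proof is in assembling a single $\w=\sum_k c_k\r_k(\w_k\circ H_{n_k+1}^{-1})\in\smm(M)$ with the $j<k$ tails controlled by the sign choice and the $j>k$ tails by the weights, which produces an explicit function not in $Cob^\infty(f)$; the paper's last line, ``$\|\psi_n\|_{\s+1}\ra\infty$ while $\tilde{\w}_n$ stays bounded,'' implicitly leans on continuity of the inverse coboundary operator (open mapping in the Fr\'echet space) rather than exhibiting a single witness. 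Two very small remarks: your motivational claim that working directly with $(H_n)_*\tilde{u}_n$ would ``force a contradiction with a growing family of norms'' is not accurate, since the paper's estimate is also obtained at the single order $\s+1$, and your citation of eq. \ref{eq constr conj AK} for ``$h_{n_k}$ commutes with $\tilde{f}_{n_k}$'' is off by one against the paper's stated convention ($h_n$ commutes with $\tilde{f}_{n-1}$), though the indexing in assumption \ref{ass comp gr case} supports your reading in spirit; neither affects the argument.
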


The proof goes exactly as that of prop. \ref{prop approx}.

\begin{proof}
Let $\tilde{u}_{n}$ satisfy assumption \ref{ass comp gr case},
and define $\tilde{\w} _{n} \in \smm$ as in the proof of prop. \ref{prop approx}.

Suppose that $f \in CR$, and call $\psi_{n} \in \smm$ the solution to the cohomological
equation for $\tilde{\w} _{n} \circ H_{n+1} $ over $f$. If one tests the equation against
$u_{n+1} = (H_{n+1})_{*}\tilde{u}_{n}$ and uses invariance under $f_{n+1}$, then, just as
in the proof of lemma \ref{badness estimate}, one finds that
\begeq
\langle u _{n+1},
\psi_{n} \circ ( f \circ f _{n+1} ^{-1}) -  \psi_{n} \rangle =
 \langle \tilde{u}  _{n},\tilde{\w} _{n}  \rangle
\endeq
which implies that
\begeq
\|u_{n+1}\|_{H^{-\s}} \| \psi_{n} \|_{\s+1} \geq K \l _{N_{n}}^{-s_{0}}\d_{n+1,\s}^{-1}
\endeq
The estimate of eq. \ref{composition} and the fact that we can assume without any loss
of generality that $\|\tilde{u}_{n}\|_{H^{-\s}} = 1$, imply that
\begeq
\|u_{n+1}\|_{H^{-\s}} \leq C_{\s} \vertiii{H_{n+1}}_{\s}
\endeq
Therefore, we immediately have
\begin{equation} \label{eq est sol comp gr case}
\| \psi_{n} \|_{\s+1} \geq K_{\s} \l _{N_{n}}^{-s_{0}}\vertiii{H_{n+1}}_{\s}^{-1}\d_{n+1,\s}^{-1}
\end{equation}
Now, lemma \ref{lem orders magn} allows us to conclude: we have
\begeq
\vertiii{H_{n+1}}_{\s}^{-1} \gtrsim \tilde{\eta}_{\s ,n}^{\nu},
\d _{\s ,n+1}^{-1} \gtrsim \tilde{\eta}_{\s ,n+1}^{-1+\e},
\l _{N_{n}}^{-s_{0}} \gtrsim \tilde{\eta}_{\s ,n}^{s_{0/\t}}
\endeq
and $\tilde{\eta}_{\s,n}\ra 0 $ fast, so that $\|\psi _{n} \|_{\s+1} \ra \infty$ while
$\tilde{\w}_{n}$ stays bounded.
\end{proof}

We remark that the relevant part of the assumptions is that $\tilde{u}_{n}$ has significant support
in the modes smaller than $\l _{N_{n}}$. The fact that is it preserved by $\tilde{f}_{n}$
is actually irrelevant for the proposition. The fact that $h_{n}$ commutes with
$\tilde{f}_{n}$, and thus is an one-to-one automorphism of $\dot{H}^{-\s }(\tilde{f}_{n})$
is not relevant, either.

They are both relevant, however, in the context of the construction: when the
construction tunes the dynamics in the scale $\l_{N_{n+1}}^{-1}$, which is hopefully a lot
smaller than $\l_{N_{n}}^{-1}$, one does not want to create obstructions at the scale
$\l_{N_{n}}^{-1}$ that did not exist before. The proposition implies that one should not
carry over any obstructions from the previous scale, either.

\subparagraph*{The rotation vector case}
We are now left with the case where, even though
\begin{equation} \label{eq inters obs}
\dot{H}^{-\s }(\tilde{f}_{n}) \bigcap \dot{H}^{-\s }(\tilde{f}_{n+1}) \neq \emptyset
\end{equation}there exist admissible conjugations $h_{n}$ violating assumption \ref{ass comp gr case}.
We note that the set of such conjugations would be open if $\dot{H}^{-\s }(\tilde{f}_{n})$
were of finite dimension, and is $G_{\d}$ under the assumption that each diffeomorphism in
$\CC$ preserve an infinite number of distributions (assumption \ref{ass inf dim ob}).

We remind that in the absence of the condition in eq. \ref{eq inters obs}, one need not
introduce a conjugation in order to destroy invariant distributions. Therefore, the
condition is natural in the context of an Anosov-Katok-like construction.

\begin{assumption} \label{ass rot vec case}
An Anosov-Katok-like construction satisfies the assumptions of the rotation vector case iff
for any admissible sequences $\tilde{f}_{n}$ and $h_{n}$, the following is true, at least
along a subsequence $n_{k} \ra \infty$. For any given $\tilde{f}_{n}$ and any admissible
choice of $\tilde{f}_{n+1}$, there exists an admissible conjugation $h_{n}$ such that for
all
\begeq
\tilde{u} _{n} \in \dot{H}^{-\s }(\tilde{f}_{n})
\text{ such that }
\| T_{N_{n}} \tilde{u}_{n}  \|_{H^{-\s }} \geq K \l _{N_{n}} ^{-s_{0} }
\endeq
we have
\begeq
\tilde{\w}_{n} \circ h_{n}\in Cob^{\infty} (\tilde{f}_{n+1})
\endeq
The sequences $\tilde{f}_{n}$ and $h_{n}$ are admissible iff they satisfy assumptions
\ref{ass inf dim ob} to \ref{ass decay eta}.
\end{assumption}

In this case where assumption \ref{ass comp gr case} is violated. We call it the
"rotation vector case" as it resembles the way invariant distributions behave when a
construction is based upon periodic translations in tori. In the rotation vector case,
will need more assumptions
than the ones on orders of magnitude, and more precisely on the inverse of the coboundary
operators of the diffeomorphisms in $\CC$. There is, however, a subcase where such
assumptions are not needed.

If convergence of $\tilde{f} _{n}\ra \tilde{f}$ is fast and moreover
\begin{equation} \label{eq exp dec eta}
\tilde{\eta} _{s ,n} = O(\l _{N_{n}}^{-\infty})
\end{equation}
in which case the argument proving proposition \ref{prop approx} applies to $\tilde{f}$,
one should not expect the action of conjugations to save cohomological rigidity. Indeed, we
can prove the following proposition.

\begin{proposition} \label{example prop rot vec 1}
Let an Anosov-Katok-like construction satisfy standing assumptions \ref{ass inf dim ob}
to \ref{ass decay eta} and assumption \ref{ass rot vec case}, so that it
corresponds to the rotation vector case.
Assume, moreover, that the rate of convergence imposed in eq. \ref{eq exp dec eta} is
satisfied.

Then, the limit objects $f$ thus obtained by the construction are not $CR$.
\end{proposition}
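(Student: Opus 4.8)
The plan is to imitate the proof of Proposition \ref{prop approx}, but applied directly to the limit object $f$ together with the sequence of representatives $f_n = H_n \circ \tilde f_n \circ H_n^{-1} \in \TT$, rather than to $\tilde f = \lim \tilde f_n \in \overline{\CC}$ (to which Proposition \ref{prop approx} does apply under eq. \ref{eq exp dec eta}, but which is not conjugate to $f$). The guiding observation is that under eq. \ref{eq exp dec eta} the excess of room in the rate of approximation is so large that it survives the passage through the diverging conjugations $H_n$. Concretely, I would take the obstruction of $f_n$ to be $u_n = (H_n)_{*}\tilde u_n$, where $\tilde u_n \in \dot H^{-\s}(\tilde f_n)$ is the distribution supplied by assumptions \ref{ass inf dim ob} and \ref{ass rot vec case} with $\| T_{N_n}\tilde u_n \|_{H^{-\s}} \geq K \l_{N_n}^{-s_0}$; since $(f_n)_{*}u_n = (H_n)_{*}(\tilde f_n)_{*}\tilde u_n = u_n$, indeed $u_n \in \dot H^{-\s}(f_n)$. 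Attached to $\tilde u_n$ is a normalised test function $\tilde{\w}_n = T_{N_n}\tilde{\w}^{(n)}$, $\tilde{\w}^{(n)}\perp_{H^\s}\ker\tilde u_n$, exactly as in the proof of Proposition \ref{prop approx}, so that $\langle \tilde u_n,\tilde{\w}_n\rangle$ is bounded below by a fixed negative power of $\l_{N_n}$, say $\langle\tilde u_n,\tilde{\w}_n\rangle\geq K'\l_{N_n}^{-s_0'}$.

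The candidate non-coboundary over $f$ would then be $\w = \sum_k c_k\, \d_{\s,k}\, (\tilde{\w}_k \circ H_k^{-1})$, with $\d_{\s,k} = d_\s(f_k, f)$ and polynomial weights $c_k = \pm \l_{N_k}^{s_1}\vertiii{H_k}_\s$ (signs chosen along the way as in Proposition \ref{prop approx}; the factor $\vertiii{H_k}_\s$ is inserted to cancel the loss incurred when one normalises $u_n$). First I would check that $\w$ is genuinely $\sm$: by eq. \ref{composition} and $\|\tilde{\w}_k\|_{C^s}\lesssim\l_{N_k}^{s+d/2}$ the $k$-th summand has $C^s$ norm $\lesssim \l_{N_k}^{s_1+s+d/2}\vertiii{H_k}_\s \vertiii{H_k}_s\, \d_{\s,k}$, and one concludes using Lemma \ref{lem orders magn} and assumption \ref{ass size conj} (so that $\vertiii{H_k}_s$ is a fixed polynomial power of $\tilde{\eta}_{s,k-1}^{-1}$) together with the two smallness estimates $\d_{\s,k} = O(\tilde{\eta}_{s,k-1}^{\infty})$ and, crucially via eq. \ref{eq exp dec eta} and Lemma \ref{lem orders magn}(4), $\d_{\s,k} = O(\l_{N_k}^{-\infty})$; these force $\d_{\s,k}$ to be super-polynomially small relative to $\l_{N_k}\vertiii{H_k}_s$, so each summand decays faster than any power and $\w \in \smm(M)$.

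Finally, assuming $\w \in Cob^{\infty}(f)$ with $\psi \circ f - \psi = \w$, I would test against $u_n$ and use $(f_n)_{*}u_n = u_n$ to obtain, exactly as in Lemma \ref{badness estimate}, the identity $\langle u_n, \psi \circ (f\circ f_n^{-1}) - \psi\rangle = \langle u_n,\w\rangle$. Expanding $\langle u_n,\w\rangle = \sum_k c_k\d_{\s,k}\langle \tilde u_n, (\tilde{\w}_k\circ H_k^{-1})\circ H_n\rangle$, the term $k=n$ equals $c_n\d_{\s,n}\langle \tilde u_n,\tilde{\w}_n\rangle$; dividing by $\d_{\s,n}$ and choosing the sign of $c_n$ so the $k<n$ part reinforces it gives $\d_{\s,n}^{-1}|\langle u_n,\w\rangle| \geq |c_n\langle\tilde u_n,\tilde{\w}_n\rangle| - |\d_{\s,n}^{-1}\sum_{k>n} c_k\d_{\s,k}\langle\tilde u_n,\tilde{\w}_k\circ H_k^{-1}\circ H_n\rangle|$. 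For the first term, $\|u_n\|_{H^{-\s}} \lesssim \vertiii{H_n}_\s$ by eq. \ref{composition}, so after normalising $u_n$ the weight $\vertiii{H_k}_\s$ in $c_n$ cancels the conjugation loss and $|c_n\langle\tilde u_n,\tilde{\w}_n\rangle| \gtrsim \l_{N_n}^{s_1-s_0'}\to\infty$ once $s_1 > s_0'$; for the tail one estimates $\tilde{\w}_k\circ(H_k^{-1}\circ H_n)$ brutally by eq. \ref{composition}, noting $H_k^{-1}\circ H_n$ is a composition of the increments $h_n,\dots,h_{k-1}$ and hence of size $O(\tilde{\eta}_{s,k-1}^{-\mathrm{poly}})$, so each tail term is again super-polynomially small in $\l_{N_k}\tilde{\eta}_{s,k-1}^{-1}$, the sum over $k>n$ is bounded (after passing to a subsequence as in Proposition \ref{prop approx}), and the large prefactor $\d_{\s,n}^{-1} = O(\tilde{\eta}_{s,n-1}^{-\infty})$ is absorbed using eq. \ref{eq exp dec eta} together with $\tilde{\eta}_{s,n-1}^{-1} = O(\tilde{\eta}_{s,n}^{-\e})$ for every $\e>0$. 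Hence $\d_{\s,n}^{-1}|\langle u_n,\w\rangle|\to\infty$, so Lemma \ref{badness estimate} forces $\|\psi\|_{C^{\s+1}}=\infty$, a contradiction; thus $\w\notin Cob^{\infty}(f)$ and $f\notin CR(M)$.

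I expect the main obstacle to be precisely the orders-of-magnitude bookkeeping in the last two paragraphs: one must simultaneously keep $\w$ smooth (forcing the weights $c_k\d_{\s,k}$ to decay rapidly) and keep $\d_{\s,n}^{-1}\langle u_n,\w\rangle$ divergent (forcing $c_n\d_{\s,n}$ not to decay too fast), all while the conjugation sizes $\vertiii{H_n}_\s$ appear in both the numerator and the denominator. The whole scheme succeeds only because eq. \ref{eq exp dec eta} makes $\d_{\s,n}$ super-polynomially small in $\l_{N_n}$ \emph{in addition} to its smallness relative to the previous step's scale; this is the analogue of the Liouville condition, and it is exactly the hypothesis whose failure forces Proposition \ref{example prop rot vec 2} and the subsequent statements to invoke extra assumptions on the inverse of the coboundary operator.
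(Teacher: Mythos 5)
Your proposal is correct, but it takes a genuinely different route from the paper's. The paper's proof does not construct a single non-coboundary $\w$; instead, for each $n$ separately, it fixes the function $\f_n = \tilde{\w}_n\circ H_n^{-1}$ (there is a typo in the paper writing $H_n$ rather than $H_n^{-1}$), assumes $f\in CR$, lets $\psi_n$ be the corresponding solution of $\psi_n\circ f-\psi_n=\f_n$, tests against $u_n=(H_n)_*\tilde u_n$ using invariance under $f_n$, and obtains
\begin{equation*}
\|\psi_n\|_{\s+1}\gtrsim \l_{N_n}^{-s_0}\,\vertiii{H_n}_\s^{-1}\,\d_{\s,n}^{-1},
\end{equation*}
which is the analogue of eq.~\ref{eq est sol comp gr case} with $n+1$ replaced by $n$. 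The contradiction is then obtained by orders of magnitude exactly as in proposition~\ref{example prop comp gr}: the right-hand sides $\f_n$ remain of controlled size while $\|\psi_n\|_{\s+1}\to\infty$, contradicting the continuity of the inverse coboundary operator which $CR$ (hence $CS$) would supply via the open mapping theorem. Your approach instead transposes the argument of proposition~\ref{prop approx} to the limit object $f$ by pushing the obstructions $\tilde u_n$ forward by $H_n$, inserting the compensating factor $\vertiii{H_k}_\s$ into the weights $c_k$, and assembling an explicit $\w\in\smm(M)\setminus Cob^\infty(f)$; the Liouville-type condition of eq.~\ref{eq exp dec eta} is then what allows the super-polynomial smallness of $\d_{\s,k}$ to absorb the conjugation losses on both sides (smoothness of $\w$, and divergence of $\d_{\s,n}^{-1}|\langle u_n,\w\rangle|/\|u_n\|$). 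Both arguments rest on the same key observation --- that $u_n=(H_n)_*\tilde u_n\in\dot H^{-\s}(f_n)$ still satisfies a usable low-mode lower bound after paying the polynomial conjugation cost $\vertiii{H_n}_\s$ --- and use lemma~\ref{badness estimate} as the engine. Your version is more constructive and mirrors proposition~\ref{prop approx} more faithfully, at the price of heavier bookkeeping (the tail $\sum_{k>n}$, the sign choice, passing to a subsequence); the paper's version is shorter, and defers the contradiction to the open mapping theorem rather than to an explicit function. One small caution: be careful that when you normalise $u_n$ by $\|u_n\|_{H^{-\s}}$, the lower bound $\|u_n\|\lesssim\vertiii{H_n}_\s$ is what cancels the $\vertiii{H_k}_\s$ you inserted in $c_k$; the tail estimate is then cleanest if bounded directly by $\|\tilde{\w}_k\circ H_k^{-1}\|_{H^\s}$ against the normalised $u_n$, rather than by testing against $\tilde u_n$ and then dividing by $\|u_n\|$, since the latter requires an additional lower bound on $\|u_n\|_{H^{-\s}}$.
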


\begin{proof}
The argument is again by orders of magnitude as in proposition \ref{example prop comp gr}.

Let $\tilde{u}_{n}$ and $\tilde{\w} _{n} \in \smm$ as in the proof of prop.
\ref{prop approx}, suppose that $f \in CR$, and call $\psi_{n} \in \smm$ the solution to
the cohomological equation for $\tilde{\w} _{n} \circ H_{n} $ over $f$. If one tests the
equation against $u_{n} = (H_{n})_{*}\tilde{u}_{n}$ and uses invariance under $f_{n}$,
then, just as in the proof of proposition \ref{example prop comp gr}, one finds that
\begeq
\| \psi_{n} \|_{\s+1} \geq K_{\s} \l _{N_{n}}^{-s_{0}}\vertiii{H_{n}}_{\s}^{-1}
\d_{n,\s}^{-1}
\endeq
This is eq. \ref{eq est sol comp gr case}, with the only difference that we are using
invariance by $f_{n}$ instead of $f_{n+1}$.

As in the proof of proposition \ref{example prop comp gr}, $\d_{n,\s}^{-1/2}$ can be used
to balance the term $\vertiii{H_{n}}_{\s}^{-1}$ and the remaining $\d_{n,\s}^{-1/2}$ in
order to balance $\l _{N_{n}}^{-s_{0}}$ in view of eq. \ref{eq strictly pol dec}.
\end{proof}

We now remark that, of the decay of $\tilde{\eta}  _{s ,n}$ is strictly polynomial, i.e.
\begin{equation} \label{eq strictly pol dec}
\tilde{\eta} _{s ,n} \geq \tilde{\gamma} \l _{N_{n }}^{-\t }
\end{equation}which is the inverse inequality of that in \ref{ass decay eta}, then we have fast growth
of $\l _{N_{n}}$, due to the assumed
fast convergence of $\tilde{f} _{k} \ra \tilde{f} $, implies directly the fast growth of
the $N_{n}$.

\begin{lemma} \label{lem fast growth Nn}
If $\tilde{f} _{n}\ra \tilde{f}$ fast and assumption \ref{ass decay eta} and condition of
eq. \ref{eq strictly pol dec} both hold, we have fast growth of the sequence $N_{n}$:
\begeq
\l _{N_{n}}^{-l}\l _{N_{n+1}} \ra \infty , \forall l \in \N
\endeq
\end{lemma}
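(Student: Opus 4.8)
The plan is to convert the two one-sided polynomial bounds on $\tilde{\eta}_{\s,n}$ into a single two-sided comparison with $\l_{N_n}^{-\t}$, plug this into the definition of fast convergence, and then unwind the exponents. Fix once and for all the seminorm index $s=\s$, on which both estimates are available: assumption \ref{ass decay eta} provides $\t>0$ and a constant $\gamma=\gamma_{\s}>0$, uniform in $n$, with $\tilde{\eta}_{\s,n}\le\gamma\,\l_{N_n}^{-\t}$, while the hypothesis of eq. \ref{eq strictly pol dec} provides $\tilde{\gamma}>0$, again uniform in $n$, with $\tilde{\eta}_{\s,n}\ge\tilde{\gamma}\,\l_{N_n}^{-\t}$. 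Together these give
\begin{equation*}
\tilde{\gamma}\,\l_{N_n}^{-\t}\ \le\ \tilde{\eta}_{\s,n}\ \le\ \gamma\,\l_{N_n}^{-\t}\qquad\text{for all }n,
\end{equation*}
so that $\tilde{\eta}_{\s,n}\asymp\l_{N_n}^{-\t}$ with $n$-independent constants.

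\textbf{Unwinding fast convergence.} Since $\tilde{f}_n\to\tilde{f}$ fast, Definition \ref{def fast approx} (equivalently assumption \ref{ass fast conv}) applied to the sequence $\tilde{f}_n$ and to the index $s=\s$ gives, for every $l\in\N$, that $\tilde{\eta}_{\s,n-1}^{-l}\,\tilde{\eta}_{\s,n}\to 0$ as $n\to\infty$. Now bound the left-hand side from below using the two-sided comparison: from the upper bound on $\tilde{\eta}_{\s,n-1}$ one gets $\tilde{\eta}_{\s,n-1}^{-l}\ge\gamma^{-l}\,\l_{N_{n-1}}^{\t l}$, and from the lower bound on $\tilde{\eta}_{\s,n}$ one gets $\tilde{\eta}_{\s,n}\ge\tilde{\gamma}\,\l_{N_n}^{-\t}$, whence
\begin{equation*}
\tilde{\eta}_{\s,n-1}^{-l}\,\tilde{\eta}_{\s,n}\ \ge\ \gamma^{-l}\tilde{\gamma}\left(\frac{\l_{N_{n-1}}^{\,l}}{\l_{N_n}}\right)^{\!\t}.
\end{equation*}
As the left-hand side tends to $0$ and $\t>0$, this forces $\l_{N_{n-1}}^{\,l}/\l_{N_n}\to 0$, i.e. $\l_{N_n}\,\l_{N_{n-1}}^{-l}\to\infty$. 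Re-indexing $n\mapsto n+1$ yields $\l_{N_n}^{-l}\,\l_{N_{n+1}}\to\infty$, and since $l\in\N$ was arbitrary this is precisely the asserted fast growth of $\{N_n\}$.

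\textbf{On the difficulty.} There is essentially no analytic obstacle here: the statement is a bookkeeping consequence of the definitions once the comparison $\tilde{\eta}_{\s,n}\asymp\l_{N_n}^{-\t}$ is available. The only points demanding a little care are that a single value of $s$ be fixed on which both the upper bound of assumption \ref{ass decay eta} and the lower bound of eq. \ref{eq strictly pol dec} hold, and that the constants $\gamma,\tilde{\gamma},\t$ be uniform in $n$ — both granted by the hypotheses — so that the ``for every $l$'' quantifier in the definition of fast convergence passes, after dividing the exponent by the fixed $\t$, to the ``for every $l$'' quantifier in the conclusion. It is worth recording that both one-sided bounds are genuinely used: the strict polynomial decay of eq. \ref{eq strictly pol dec} controls $\tilde{\eta}_{\s,n}$ from below, assumption \ref{ass decay eta} controls $\tilde{\eta}_{\s,n-1}$ from above, and dropping either one breaks the argument.
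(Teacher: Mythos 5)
Your proof is correct and takes essentially the same route as the paper's: both use the two one-sided polynomial comparisons to bound the quantity appearing in the fast-convergence definition and then divide by the fixed exponent $\t>0$. (The paper states the reciprocal version, $\tilde{\eta}_{s,n}^{l}\tilde{\eta}_{s,n+1}^{-1}\to\infty$ bounded above by $(\l_{N_n}^{-l}\l_{N_{n+1}})^{\t}$ up to a constant — the exponent in the paper's display has a sign typo — but this is the same computation as yours after re-indexing and inverting.)
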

\begin{proof}
Since $\tilde{\eta} _{s ,n}^{-l}\tilde{\eta} _{s ,n+1}^{-1} \ra \infty $. Under our
assumptions we have
\begeq
\tilde{\eta} _{s ,n}^{-l}\tilde{\eta} _{s ,n+1}^{-1} \lesssim
(\l _{N_{n}}^{-l} \l _{N_{n+1}})^{\t }
\endeq
which proves the lemma.
\end{proof}
Naturally, if the exponent in eq. \ref{eq strictly pol dec} is smaller than the one of
assumption \ref{ass decay eta}, the result remains true (only notation gets heavier).

In order to treat this case, which is the only remaining one in order to prove corollary
\ref{cor no counter-ex}, we need to introduce the following assumption on the coboundary
operators of diffeomorphisms in $\CC$. Put informally, it states that solving at the
$n+1$-th step for obstructions arising at the $n$th step, which we assume possible, costs
a positive power of $\l _{N_{n+1}}$, the scale of the construction at the $n+1$-th step.

\begin{assumption} \label{ass cob op C}
There exist constants $K ,\k>0$ for which the following holds.

Let a sequence $\tilde{f}_{n}$ in the Anosov-Katok-like construction satisfying assumption
\ref{ass comp gr case}.

For every $n$, consider distributions $\tilde{u}_{n}$ satisfying eq. \ref{eq spec un}, and
functions $\tilde{\w}_{n}$ as in the proof of proposition \ref{prop approx}.

We suppose that for every $n$, there exists $\tilde{u}_{n}$ such that, for every admissible
conjugation $h_{n}$ as in assumption \ref{ass rot vec case}, any function $\psi _{n+1}$
solving
\begeq
\tilde{\psi} _{n+1} \circ \tilde{f}_{n+1} -\tilde{\psi} _{n+1} =
\tilde{\w}_{n} \circ h_{n}
\endeq
satisfies the estimate
\begeq
K ^{-1} \l _{N_{n+1}}^{\k} \|\tilde{\w}_{n} \circ h_{n}\|_{s} \leq 
\|\tilde{\psi} _{n+1}\|_{s} \leq
K \l _{N_{n+1}}^{\k}\|\tilde{\w}_{n} \circ h_{n}\|_{s}
\endeq
for $s = \s , \s +1$.
\end{assumption}
Again, we suppose that the exponents for the lower and upper bounds are equal only in
order to keep notation lighter.

\begin{remark}
We observe that $\k $, provided that it exists and is finite, is non-increasing when
the sequence $\l _{N_{n}}$ decreases, since decreasing $\l _{N_{n}}$ amounts to estimating
the restriction of the inverse of the coboundary operator of $\tilde{f}_{n}$ to a smaller
space.
\end{remark}

We can prove the following proposition, which concludes the ingredients for the proof of
corollary \ref{cor no counter-ex}.

\begin{proposition} \label{example prop rot vec 2}
Let an Anosov-Katok-like construction satisfy standing assumptions \ref{ass inf dim ob}
to \ref{ass decay eta} and assumptions \ref{ass rot vec case} and \ref{ass cob op C},
so that it corresponds to the rotation vector case.

Then, the limit objects $f$ obtained by constructions for which $\k < \t$ are not $CR$.
The parameters $\k $ and $\t$ are as in assumptions \ref{ass cob op C} and
\ref{ass decay eta}, respectively.
\end{proposition}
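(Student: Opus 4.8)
The plan is to argue by contradiction: assuming $f \in CR(M)$, I construct along the subsequence supplied by Assumptions~\ref{ass rot vec case} and~\ref{ass cob op C} a sequence of smooth coboundaries over $f$ whose solutions grow faster than any finite loss of derivatives, which is incompatible with Cohomological Rigidity. First, set up the data exactly as in the proofs of Propositions~\ref{prop approx} and~\ref{example prop comp gr}: for each $n$ pick $\tilde u_n \in \dot H^{-\s}(\tilde f_n)$ with $\|T_{N_n}\tilde u_n\|_{H^{-\s}} \ge K\l_{N_n}^{-s_0}$, let $\tilde\w_n = T_{N_n}\w^{(n)}$ be the associated band-limited test function (so $\|\tilde\w_n\|_{\s} \ge K\l_{N_n}^{-s_0}$), and let $h_n$ be the admissible conjugation furnished by Assumption~\ref{ass rot vec case}, so that $\tilde\w_n \circ h_n \in Cob^{\infty}(\tilde f_{n+1})$. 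Since $H_{n+1} = H_n \circ h_n$ we have $\f_n' := \tilde\w_n \circ h_n \circ H_{n+1}^{-1} = \tilde\w_n \circ H_n^{-1}$, while Assumption~\ref{ass size conj} keeps $\vertiii{H_{n+1}}_{\s+1}$, $\vertiii{h_n^{-1}}_{\s}$ and $\l_{N_n}^{s_0}$ polynomial in $\l_{N_n}$; after thinning the construction steps (which does not affect the limit $f$), Lemma~\ref{lem fast growth Nn} lets us assume $\l_{N_{n+1}} \ge \l_{N_n}^{n}$, so that $\l_{N_{n+1}}^{\k}$ dominates any such polynomial in $\l_{N_n}$.

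Now let $\tilde\psi_{n+1} \in \smm (M)$ solve $\tilde\psi_{n+1}\circ\tilde f_{n+1} - \tilde\psi_{n+1} = \tilde\w_n\circ h_n$; by Assumption~\ref{ass cob op C}, $\|\tilde\psi_{n+1}\|_s \asymp \l_{N_{n+1}}^{\k}\|\tilde\w_n\circ h_n\|_s$ for $s = \s,\s+1$, and $\|\tilde\w_n\circ h_n\|_{\s+1} \ge \|\tilde\w_n\circ h_n\|_{C^0} = \|\tilde\w_n\|_{C^0} \gtrsim \l_{N_n}^{-s_0 - \s}$ ($h_n$ being volume-preserving and $\tilde\w_n$ band-limited). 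Set $\psi_n' := \tilde\psi_{n+1}\circ H_{n+1}^{-1}$; by Lemma~\ref{lem cohom eq ch coord} it solves $\psi_n'\circ f_{n+1} - \psi_n' = \f_n'$, hence
\begeq
\psi_n'\circ f - \psi_n' \;=\; \f_n' + \bigl(\psi_n'\circ f - \psi_n'\circ f_{n+1}\bigr) \;=:\; \f_n
\endeq
over $f$ itself. By eq.~\ref{composition 2}, $\|\psi_n'\circ f - \psi_n'\circ f_{n+1}\|_s \le C_s\vertiii{f_{n+1}}_s\|\psi_n'\|_{s+1}\d_{s,n+1}$, whereas Lemma~\ref{lem orders magn}(4) and Assumption~\ref{ass decay eta} give $\d_{s,n+1} = O(\tilde\eta_{s,n+1}^{1-\e}) = O(\l_{N_{n+1}}^{-\t(1-\e)})$ for every $\e > 0$. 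Since $\k < \t$, fixing $\e$ with $\t(1-\e) > \k$ renders this perturbation negligible, in $\|\.\|_{\s}$ and $\|\.\|_{\s+1}$, next to $\|\psi_n'\|_{\s+1} \ge \|\tilde\psi_{n+1}\|_{\s+1}\,(C_{\s+1}\vertiii{H_{n+1}}_{\s+1})^{-1}$. Thus for $n$ large $\f_n \in \smm (M)$ is a genuine coboundary over $f$ whose unique solution (uniqueness because $CR \Rightarrow DUE \Rightarrow$ minimal) is $\psi_n'$, and, just as in Lemma~\ref{badness estimate cob},
\begeq
\frac{\|Cob_f^{-1}\f_n\|_{\s+1}}{\|\f_n\|_{\s}} \;=\; \frac{\|\psi_n'\|_{\s+1}}{\|\f_n\|_{\s}} \;\gtrsim\; \frac{\l_{N_{n+1}}^{\k}}{\text{poly}(\l_{N_n})} \;\to\; \infty ,
\endeq
using $\|\f_n\|_{\s} \le \|\f_n'\|_{\s} + o(1) \lesssim \|\tilde\w_n\|_{\s}\vertiii{H_n^{-1}}_{\s} + o(1) = \l_{N_n}^{O(1)}$.

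This contradicts $f \in CR(M)$. Indeed, Cohomological Rigidity means the coboundary operator $\psi \mapsto \psi\circ f - \psi$ is a continuous linear bijection of the Fréchet space $\smm (M)$, so by the open mapping theorem $Cob_f^{-1}$ is continuous, i.e. there are $s_1 = s_1(\s) < \infty$ and $C < \infty$ with $\|Cob_f^{-1}\varphi\|_{\s+1} \le C\|\varphi\|_{s_1}$ for every $\varphi \in \smm (M)$ --- a fixed, finite loss of derivatives. Applied to $\varphi = \f_n$ this would force $\|\psi_n'\|_{\s+1} \le C\|\f_n\|_{s_1}$; but $\f_n' = \tilde\w_n\circ H_n^{-1}$ is spectrally supported below $N_n$ and the error term is $o(1)$ in every fixed norm, so $\|\f_n\|_{s_1}$ is polynomial in $\l_{N_n}$, while $\|\psi_n'\|_{\s+1} \gtrsim \l_{N_{n+1}}^{\k}/\text{poly}(\l_{N_n})$; since $\l_{N_{n+1}} \ge \l_{N_n}^n$ this is impossible for $n$ large.

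The step I expect to be the main obstacle is making this last paragraph rigorous at the regularity $s_1 = s_1(\s)$ dictated by the open mapping theorem, which may well exceed $\s+1$: bounding $\|\f_n\|_{s_1}$ requires $\|\psi_n'\circ f - \psi_n'\circ f_{n+1}\|_{s_1} = o(\|\f_n'\|_{s_1})$, hence control of $\|\psi_n'\|_{s_1+1} = \|\tilde\psi_{n+1}\circ H_{n+1}^{-1}\|_{s_1+1}$, i.e. of $\|\tilde\psi_{n+1}\|_{s_1+1}$, a regularity above the range $\{\s,\s+1\}$ covered by Assumption~\ref{ass cob op C}. This is where the Cohomological Stability of $\CC$ (Assumption~\ref{ass inf dim ob}) must enter quantitatively: one needs that solving the cohomological equation over any $\tilde f \in \CC$ costs at most a fixed, $\tilde f$-independent number of derivatives, so that $\|\tilde\psi_{n+1}\|_{s_1+1}$ stays of the form $\l_{N_{n+1}}^{O(1)}\,\text{poly}(\l_{N_n})$ and the gap $\k < \t$ still absorbs the error. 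Alternatively, one stays within the two regularities $\s,\s+1$ by testing with a single function $\w = \sum_n \rho_n \f_n$, the $\rho_n > 0$ chosen small enough that $\w \in \smm (M)$ and the signs chosen, as in the proof of Proposition~\ref{prop approx}, so that distinct scales do not cancel: then $Cob_f^{-1}\w = \sum_n \rho_n\psi_n'$ must converge in $\smm (M)$, in particular $\rho_n\|\psi_n'\|_{\s+1} \to 0$, which the lower bound above contradicts --- at the cost of the cross-term bookkeeping already present in Proposition~\ref{prop approx}.
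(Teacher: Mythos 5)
Your reconstruction is essentially the paper's proof, with the continuity of the inverse coboundary operator invoked directly (via the open mapping theorem on the Fr\'echet space $\smm(M)$) rather than packaged as Lemma~\ref{lem closed graph}. The paper likewise takes $\psi_{n+1}$ to be the solution over $f_{n+1}$ with right-hand side $\tilde\w_n\circ H_n^{-1}$, passes to the conjugated equation over $\tilde f_{n+1}$ by Lemma~\ref{lem cohom eq ch coord}, uses Assumption~\ref{ass cob op C} to pin $\|\tilde\psi_{n+1}\|_{\s+1}\asymp\l_{N_{n+1}}^{\k}\|\tilde\w_n\circ h_n\|_{\s+1}$, and then exploits $\k<\t$ to make the error term negligible. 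The only cosmetic divergence is that the paper rescales by $\l_{N_{n+1}}^{-\e}$ so that the right-hand side tends to $0$ in $\sm$ while the rescaled solutions diverge in $C^{\s+1}$ (then applies Lemma~\ref{lem closed graph} for the contradiction), whereas you keep the right-hand side of polynomial size and let the solution norm blow up relative to a fixed loss of derivatives. Since Lemma~\ref{lem closed graph} is proved precisely by ``the continuity of the inverse of the Coboundary operator of $f$,'' these are the same mechanism.

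The concern you raise in the final paragraph is legitimate and, notably, is present in the paper's own argument, not just in your version. Lemma~\ref{lem closed graph} demands $\|\partial\psi_n\|_s\,d(f,f_n)\ra 0$ \emph{for all} $s$, but the paper only verifies it at $s=\s$, and Assumption~\ref{ass cob op C} is stated only for $s\in\{\s,\s+1\}$; equivalently, the derivative loss $s_1$ provided by the open mapping theorem may exceed $\s+1$, at which point neither the paper nor you have the needed tame bound on $\|\tilde\psi_{n+1}\|_{s_1+1}$. Your two proposed remedies --- a quantitative version of the Cohomological Stability of $\CC$ guaranteeing a uniform, $\tilde f$-independent loss of derivatives, or synthesising a single test function $\w=\sum_n\rho_n\f_n$ with sign control as in Proposition~\ref{prop approx} so that one only ever tests the $H^\s\to H^{\s+1}$ continuity --- are both coherent and consonant with the rest of the paper. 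In short: your proof is faithful to the paper's, and your diagnosis of the weak point is a sharpening of the paper's own level of rigour rather than an error introduced by you.
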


We will discuss the hypothesis that $\k <\t$ in the next section.

\begin{proof}
By proposition \ref{example prop rot vec 1}, we can suppose that the lower
bound of eq. \ref{eq strictly pol dec} holds, and lemma \ref{lem fast growth Nn} applies.

Let us denote by $\psi _{n+1}$ the solution to the equation
\begeq
\psi _{n+1} \circ f_{n+1} - \psi _{n+1 } = \tilde{\w}_{n}\circ H_{n}^{-1}
\endeq
This is equivalent to $\tilde{\psi}_{n+1 } = \psi _{n+1 } \circ H_{n+1}$ being a solution
to
\begeq
\tilde{\psi}_{n+1} \circ \tilde{f}_{n+1} - \tilde{\psi}_{n+1 } = \tilde{\w}_{n}\circ h_{n}
\endeq
By hypothesis, we have
\begeq
\|\tilde{\psi}_{n+1}\|_{\s} \approx \l _{N_{n+1}}^{\k} \|\tilde{\w}_{n}\circ h_{n}\|_{\s}
\endeq
which implies that
\begeq
\l _{N_{n+1}}^{\k} \|\tilde{\w}_{n}\circ h_{n}\|_{\s} \vertiii{H_{n}}_{\s}^{-1} \lesssim
\|\psi_{n+1}\|_{\s} \lesssim
\l _{N_{n+1}}^{\k} \|\tilde{\w}_{n}\circ h_{n}\|_{\s} \vertiii{H_{n}^{-1}}_{\s} 
\endeq
A reasoning by orders of magnitude shows that, while for any positive $\e$,
\begeq
\l _{N_{n+1}}^{-\e} \tilde{\w}_{n}\circ H_{n}^{-1} \ra 0
\endeq
in the $\sm $ topology,
\begeq
\l _{N_{n+1}}^{-\e} \psi _{n+1}
\endeq
diverges in the $C^{\s+1}$ topology. Since, however, $\k < \t$ implies that
\begeq
\l _{N_{n+1}}^{-\e} \| \partial \psi _{n+1} \| _{\s} \d _{\s ,n+1} \ra 0
\endeq
lemma \ref{lem closed graph} implies that $f$ cannot be $CR$.
\end{proof}

We now prove corollary \ref{cor no counter-ex}.
\begin{proof}[Proof of Corollary \ref{cor no counter-ex}]
Consider an Anosov-Katok-like construction satisfying assumptions \ref{ass inf dim ob}
to \ref{ass decay eta} and either assumption \ref{ass comp gr case} or assumptions 
\ref{ass rot vec case} and \ref{ass cob op C}.

Application of propositions \ref{example prop comp gr} or \ref{example prop rot vec 1} and
\ref{example prop rot vec 2} in the respective cases proves that the diffeomorphisms thus
constructed are not $CR$.

Since the assumptions are cover all possible cases of Anosov-Katok-like constructions,
actually tautologically so, the corollary is proved.
\end{proof}

\subsection{Some remarks on a constructive approach to cohomology}

Invoking lemma \ref{lem closed graph} in the proof of proposition
\ref{example prop rot vec 2}, which imposes the seemingly restrictive hypothesis that
$\k < \t$. This hypothesis is in reality not restrictive as soon as one takes a
constructive point of view in the attempted construction of $CR $ examples other than
Diophantine rotations in tori.

By the term constructive point of view, we mean the following, which is quite explicit in
\cite{NKInvDist}. We consider the class $\CC$ as a class of examples, i.e. a class of
diffeomorphisms whose dynamics, and in particular their coboundary operators, are well
understood and can be inversed in an explicit way. Diffeomorphisms in $\TT $ are conjugate
to diffeomorphisms in $\CC$, which at least abstractly, means that their respective
coboundary operators are equally well understood. However, as soon as the conjugations
become wild, which is necessary for the goal of the construction, one cannot say many
things about the solutions of cohomological equations for diffeomorphisms in
$\TT \setminus \CC $, except, e.g., estimating their size as in eq. \ref{composition}.

Consequently, a lot of information is lost when one exits any precompact set in $\TT$ and
shoots for its boundary. Acknowledging this loss of information, one needs to accept that
a diffeomorphism $f \in \AKsm (M) \setminus \TT $ can be analyzed only via limit processes,
using the explicit properties of diffeomorphisms in $\CC$ and estimates on compositions.
This calls for the following definition.

\begin{definitionn} \label{def constructivity}
Suppose that a class $\CC \subset \Diffm$ is given, with a certain set of
properties $\Pi (\tilde{f})$ satisfied by $\tilde{f} \in \CC$, and consider the
corresponding Anosov-Katok space $\AKsm (\CC )$.

We will say that $f \in \AKsm (\CC )$ satisfies a certain property $\varpi$ in the constructive sence in
the context of the corresponding Anosov-Katok-like construction (and abridge to "$f $ has this certain
property constructively"), if the Anosov-Katok method can construct a sequence
$f_{n} \in \TT$, $f_{n} \ra f$ such that
\begeq
\forall n \in \N \, \exists \, \Pi (n) \subset \Pi , f_{n} \in \Pi _{n} \land f_{n} \overset{\sm }{\ra} f \Rightarrow
f \in \varpi
\endeq
\end{definitionn}

More informally, $f $ satisfies the property $\varpi$ in the constructive sense iff
said property for $f$ can be established through a limit procedure using only the known properties of
the class $\CC$ and the corresponding Anosov-Katok construction. An important remark is that
we do not demand that $\varpi \in \Pi$. For example, periodic rotations in the circle are not
uniquely ergodic, but irrational rotations are.

The statement the "$f$ does
not satisfy constructively a certain property" means only that the known properties of the class
$\CC $ do not allow us to conclude whether $f$ has the property or not, at least in the context
of the construction.

Let us explain the definition through the example properties in which we are interested in the
present article. In the context of an Anosov-Katok-like construction, a diffeomorphism $f \in \AKsm (\CC )$
is constructively $DUE$ if the construction provides an approximating sequence $\{ f_{n} \} \subset \TT $
satisfying the following property. For every function $\f \in \smm$, for every
$s \in \N $ and for every $\e >0$, there
exists $n \in \N$ and a function $\f _{n} \in \smm$ such that
\begin{enumerate}
\item $\| \f - \f _{n} \| _{s} < \e $
\item There exists $\psi _{n} \in \smm$ such that
$\psi _{n} \circ f_{n} - \psi_{n} = \f _{n}$ (i.e. $\f _{n} \in Cob^{\infty}(f_{n})$)
\item The solution $\psi _{n}$ satisfies the following estimate (cf eq. \ref{composition 2})
\begin{equation} \label{eq error terms}
C_{s }\| \psi _{n} \| _{s+1} ||| f_{n} |||_{s} d_{s}(f,f_{n}) < \epsilon
\end{equation}
\end{enumerate}
This set of properties clearly implies that $f$ is $DUE$, since
\begeq
\psi _{n} \circ f - \psi _{n} = \f + (\f _{n} - \f ) + ( \psi _{n} \circ f - \psi _{n}\circ f_{n})
\endeq
and the terms in the parentheses sum up to $<2 \e $ in the $H^{s}$ norm by assumption. This
is the constructive interpretation of the approach adopted in \cite{AFKo2015}.
Since the class $\CC $ and the approximant sequence $f_{n}$ are constructed in some sense,
the solution $\psi _{n}$ should also be expected to be obtainable in a constructive way.
More precisely, we should expect the equation
\begeq
\tilde{\psi} \circ \tilde{f}_{n} - \tilde{\psi} = \tilde{\f}
\endeq
to be solvable with a good control for a sufficiently large space of functions $\tilde{\f}$,
and that this space should grow with $n$ and cover $\smm (M)$ in the limit $n \ra \infty$.
Then, the conjugation $H_{n}$ (through which $f_{n} = H_{n} \circ \tilde{f}_{n} \circ H_{n}^{-1}$
is defined), acting on the space of such functions $\tilde{\f }$ and on the solutions
$\tilde{\psi }$ by pullback should allow the three properties listed here above to be
established.

Inspection of the proof of proposition \ref{example prop rot vec 2} shows that the
diffomorphisms $f \in \AKsm (M)$ obtained by constructions for which, in the notation of
the proposition $\k \geq \t$, are \textit{not constructively} $DUE$. This is so, because
lemma \ref{lem closed graph} does not apply, which means that the approximation
$f_{n} \ra f$ is not fast enough for the solutions of equations
\begeq
\psi _{n} \circ f_{n} - \psi _{n} = \f
\endeq
to provide information for the (eventual) solutions of
\begeq
\psi \circ f - \psi = \f
\endeq
Since we admit the $f$ is analyzable only through the limit proceedure through which we
define it, we cannot say much about its coboundary operator.

Actually, proposition \ref{example prop rot vec 2} admits a constructive counterpart.
\begin{proposition} \label{example prop rot vec 2 const}
Let an Anosov-Katok-like construction satisfy standing assumptions \ref{ass inf dim ob}
to \ref{ass decay eta} and assumption \ref{ass rot vec case}.
Suppose, moreover, that only the upper bound of assumption \ref{ass cob op C} is
known to hold.

Then, the limit objects $f$ obtained by such constructions are not 
constructively $CR$.
\end{proposition}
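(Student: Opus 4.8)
The plan is to re-run the proof of Proposition \ref{example prop rot vec 2} keeping only the upper estimate of Assumption \ref{ass cob op C}, and to read off from it that the limit procedure of Lemma \ref{lem closed graph} which would certify $f\in CR$ is no longer available. I would first fix, exactly as in Propositions \ref{prop approx} and \ref{example prop rot vec 2}, the distributions $\tilde u_n\in\dot H^{-\s}(\tilde f_n)$ with $\|T_{N_n}\tilde u_n\|_{H^{-\s}}\ge K\l_{N_n}^{-s_0}$, the auxiliary functions $\tilde\w_n$, and the candidate $\w=\sum_k c_k\d_k\,\tilde\w_k\circ H_k^{-1}\in\smm(M)$, whose smoothness follows from the fast growth of $\l_{N_k}$ (Lemma \ref{lem fast growth Nn}). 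By Proposition \ref{example prop rot vec 1} we may assume the strictly polynomial lower bound \ref{eq strictly pol dec}, so $\tilde\eta_{s,n}\asymp\l_{N_n}^{-\t}$ and Lemma \ref{lem fast growth Nn} applies. The aim is to show that the construction cannot certify $\w\in Cob^\infty(f)$.

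Next I would make explicit what "$f$ constructively $CR$" demands for $\w$: by Definition \ref{def constructivity} and the remarks preceding the proposition, the construction must exhibit an approximating sequence $f_n=H_n\circ\tilde f_n\circ H_n^{-1}\to f$, functions $\f_n\to\w$ with $\f_n\in Cob^\infty(f_n)$, and solutions $\psi_n\circ f_n-\psi_n=\f_n$ for which the error control of Lemma \ref{lem closed graph} can be \emph{verified from the data of the construction}, namely $\|\partial\psi_n\|_s\,d_s(f,f_n)\to 0$ for every $s$. As in the proof of Proposition \ref{example prop rot vec 2}, the only quantitative handle on the solution is obtained after conjugating back to $\CC$: if $\tilde\psi_{n+1}=\psi_{n+1}\circ H_{n+1}$ solves $\tilde\psi_{n+1}\circ\tilde f_{n+1}-\tilde\psi_{n+1}=\tilde\w_n\circ h_n$, the upper bound of Assumption \ref{ass cob op C} gives $\|\tilde\psi_{n+1}\|_s\le K\l_{N_{n+1}}^{\k}\|\tilde\w_n\circ h_n\|_s$ for $s=\s,\s+1$, whence by the composition estimates \ref{composition}--\ref{composition 2},
\begin{equation*}
\|\partial\psi_{n+1}\|_{\s}\ \lesssim\ \l_{N_{n+1}}^{\k}\,\|\tilde\w_n\circ h_n\|_{\s+1}\,\vertiii{H_{n+1}^{-1}}_{\s+1}.
\end{equation*}

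I would then combine this with the estimate of $d_\s(f,f_{n+1})=\d_{\s,n+1}$ coming from the triangle inequality, the fast decay of the increments and the strictly polynomial rate \ref{eq strictly pol dec}: by Lemma \ref{lem orders magn}, $\d_{\s,n+1}$ is, up to the polynomially bounded conjugation factors controlled by Assumption \ref{ass size conj}, comparable to a power of $\tilde\eta_{\s,\cdot}$, hence to a power of $\l_{N_{n+1}}^{-\t}$. Thus the quantity $\|\partial\psi_{n+1}\|_\s\,\d_{\s,n+1}$ that Lemma \ref{lem closed graph} requires to vanish is controlled, by the only available (upper) estimate, by $\l_{N_{n+1}}^{\k-\t}$ times polynomially bounded conjugation factors; in particular, exactly as in the discussion of the case $\k\ge\t$ preceding the proposition — a case which, unlike in Proposition \ref{example prop rot vec 2}, is not excluded here — this bound is itself bounded below by a positive power of $\l_{N_{n+1}}$ and need not tend to $0$. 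Lacking the lower bound of Assumption \ref{ass cob op C} — which is precisely what yields $f\notin CR$ in Proposition \ref{example prop rot vec 2} when $\k<\t$ — the construction has no means either to sharpen the estimate on $\psi_{n+1}$ or to certify the required decay. Consequently Lemma \ref{lem closed graph} is not available for the function $\w$, the construction cannot establish $\w\in Cob^\infty(f)$, and $f$ is not constructively $CR$.

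The main obstacle is twofold. Technically, it is the order-of-magnitude bookkeeping balancing $\l_{N_{n+1}}^{\k}$, $\vertiii{H_{n+1}^{\pm1}}_{s}$, $\|\tilde\w_n\circ h_n\|_{s}$ and $\d_{\s,n+1}$ via Assumptions \ref{ass size conj}, \ref{ass decay eta} and Lemma \ref{lem orders magn} — essentially the computation already carried out in Proposition \ref{example prop rot vec 2}, now with only one side of Assumption \ref{ass cob op C} in force. Conceptually — and this is the genuinely delicate point — one must argue rigorously, within the framework of Definition \ref{def constructivity}, that it is the failure of \emph{verifiability} of the hypothesis of Lemma \ref{lem closed graph}, rather than its actual failure, that underlies "not constructively $CR$"; this is essentially the content of the remarks preceding the proposition, so once that reading is granted the proof itself is short.
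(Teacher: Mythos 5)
Your proposal follows essentially the same approach as the paper: re-run the orders-of-magnitude computation of Proposition \ref{example prop rot vec 2} with only the upper estimate of Assumption \ref{ass cob op C} in force, and observe that the resulting one-sided information does not let the construction verify anything about the coboundary operator of $f$. The conclusion is correct, but two points in the write-up are worth tightening.

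First, the treatment of the two cases $\k<\t$ and $\k\geq\t$ is entangled in a way that the paper keeps separate. You write that ``lacking the lower bound $\ldots$ the construction has no means either to sharpen the estimate on $\psi_{n+1}$ or to certify the required decay,'' as though both failures occur together; but they are the two disjoint mechanisms, one per case. For $\k\geq\t$, the upper bound on $\|\partial\psi_{n+1}\|_\s\,\d_{\s,n+1}$ is $\gtrsim\l_{N_{n+1}}^{\k-\t}$ and does not tend to zero, so the decay hypothesis of Lemma \ref{lem closed graph} cannot be verified constructively --- this is the paper's second case. For $\k<\t$, however, the decay \emph{is} certifiable from the upper bound alone ($\l_{N_{n+1}}^{\k-\t}\to0$ up to polynomial factors), and the obstruction lies elsewhere: the upper bound on $\|\psi_{n+1}\|_{\s+1}$ still diverges, so the construction can neither show that the $\psi_{n+1}$ converge (which would be needed to exhibit a solution and conclude $CR$) nor, lacking the lower bound of Assumption \ref{ass cob op C}, conclude the actual divergence that gives $f\notin CR$ in Proposition \ref{example prop rot vec 2}. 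That is precisely what the paper means by ``the construction cannot conclude whether $f$ is $CR$ or not.'' Your argument does reach the right conclusion, but the phrase ``cannot $\ldots$ certify the required decay'' is simply false for $\k<\t$ and should be removed from that clause.

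Second, two cosmetic points. You describe Lemma \ref{lem closed graph} as the procedure that ``would certify $f\in CR$,'' but note that the lemma takes $f\in CR$ as a hypothesis; what it supplies constructively is the passage from approximant solutions to a limit solution, and it is this passage whose verifiability (via the decay hypothesis) is at issue. Also, your summed test function $\w=\sum_k c_k\d_k\,\tilde\w_k\circ H_k^{-1}$ is a hybrid of Proposition \ref{prop approx} (the summation) and Proposition \ref{example prop rot vec 2} (the composition with $H_k^{-1}$); the paper's argument here works directly with the sequence $\tilde\w_n\circ H_n^{-1}\to0$ and Lemma \ref{lem closed graph} applied with $\f=0$, and no summed function is needed. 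This is immaterial for the conclusion but deviates from the route the paper actually takes.
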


\begin{proof}
The proof follows verbatim the one of proposition \ref{example prop rot vec 2}. Only
upper bounds are known, and they tend to infinty. In the case where $\k < \t$, this shows
that the construction cannot conclude whether $f$ is $CR $ or not.

In the case where $\k \geq \t$, lemma \ref{lem closed graph} does not apply, which means
that the coboundary operators of the approximants do not provide information on the
coboundary operator of $f$, at least not in the constructive sense.
\end{proof}

This proposition shows that, in the regime where the Anosov-Katok-like construction is
relevant for the construction of $CR$ examples, one of propositions
\ref{example prop comp gr}, \ref{example prop rot vec 1} or \ref{example prop rot vec 2}
applies. The boundary of this regime, loosely speaking, is given by the condition
$\k = \t $ of proposition \ref{example prop rot vec 2}, which makes the hypothesis
$\k < \t $ therein relevant and thus not restrictive.

The proof of cor. \ref{cor no counter-ex} can therefore be summarized as follows.
An Anosov-Katok-like
construction proceeds either by pushing all obstructions further to infinity at each step
("rotation vector case"), or by pushing only a part of them ("compact group case").
In the second case, the orders of magnitude are sufficient in order to conclude that
the $f \in \AKsm (\CC ) \setminus \TT $ for which the construction converges are not $CR$.
In the first case, we need an assumption on the solution of the cohomological equation
for diffeomorphisms in $\CC $ in order to conclude, but the assumptions are natural and
cover all known implementations.

We also make the following remark.

\begin{remark}
The assumptions at the $n$-th step that are stated as equalities have a tolerance of
$O(\eta _{s,m}^{-\infty})$, where $m$ is the correct scale, i.e. $m=n$ or $m=n \pm 1$.
\end{remark}

Even though we refrained from stating such sharp conditions in order to keep the argument
transparent, we invite the reader to verify for themselves this fact. This is in
accordance with the fact that if the construction were to suceed, it would be expected to
do so for a dense set in $\AKsm (\CC )$, in which case $DUE$ would be generic in the
Anosov-Katok space. This genericity is reflected in the tolerance of the conditions.

We conclude this section by remarking that the regime where proposition
\ref{example prop rot vec 2} applies is outside the classical generic regime of
$O(\r _{n}^{-\infty})$ conditions, where $\r _{n}$ is a fast growing sequence. The
proposition is relevant where there is some polynomial decay and touches the boundary
of the Anosov-Katok regime of $\AKsm (\CC )$.

\section{Relation with Almost Reducibility} \label{sec AR}

\subsection{Introduction} \label{sec intro cocycles}

The two cases treated in propositions \ref{example prop comp gr},
\ref{example prop rot vec 1} and \ref{example prop rot vec 2}
cover the Almost Reducibility regime (see \cite{El2001}, \cite{KrikAst}, \cite{NKPhD})
whenever it is obtained via a K.A.M. constructive procedure.
As a consequence, in spaces of Dynamical Systems where Almost
Reducibility can be established in a non highly exotic way, the Herman-Katok conjecture
should be expected to be true.

%
%

Let us quickly explain the concept of Almost Reducibility, using the notation of \S
\ref{sec generalities & results}: we consider the
space of skew-product diffeomorphisms $SW^{\infty}_{\a}(\T ^{d} , P)$ over a fixed
rotation $\a \in \T ^{d}$.

\begin{definitionn}
A cocycle is called constant iff $A(\. ) \equiv A \in G$ is a constant mapping, and
reducible iff it is smoothly conjugate to a constant one.
\end{definitionn}
Obviously, reducible cocycles
cannot be $CR$ unless $P \approx \T ^{d'}$ for some $d' \in \N ^{*}$. A theorem by H.
Eliasson, \cite{El2001}, establishes that even in the favourable case where
$\a \in DC$ and $A(\. )$ is a perturbation of a constant mapping in $SO(3)$, the cocycle
$(\a ,A(\. ))$ might not be (and in fact will generically not be) reducible. The author
showed in \cite{NKInvDist} that generically it will actually be
$DUE \setminus CR$ and never $CR$. Such a cocycle is nonetheless \textit{almost reducible}.
\begin{definitionn}
A cocycle is almost reducible iff there exists $G_{n}(\. )$, a sequence of mappings
$\T ^{d} \ra G$ and $A_{n}$ a sequence of constants in $G$ such that
\begeq
A_{n}^{-1} \left( G_{n}(\. +\a ). A(\. ). G_{n}^{-1}(\.) \right) \ra \Id
\text{ in } \sm (\T ^{d},G)
\endeq
\end{definitionn}
The sequence $B_{n}(\. )$ generically diverges, and generically it diverges precisely
because the cocycle $(\a , A(\. ))$ is not reducible (cf. \cite{NKRigidity},
\cite{NKInvDist}).

The reason for the divergence of the sequence $B_{n}(\. )$ is the phenomenon of
\textit{resonances}. Before defining the notion of resonances, let us briefly recall some
facts from the theory of compact Lie groups (see \cite{DieudElV} or \cite{NKPhD}).
\subsection{Facts from the theory of compact Lie groups} \label{sec facts Lie groups}
For each $A \in G$
there exists at least one torus $\TT \approx \T ^{d'}$, $\TT \hra G$, of maximal dimension $d'$ (depending
only on $G$) such that $A \in \TT$. Obviously, $\overline{ \{ A ^{k} \} _{k \in \Z} \} } \subseteq \TT$
for every such torus $\TT$. Given such a torus $\TT$, called a \textit{maximal torus}, we
can decompose the adjoint action of $G$ on $g =  T_{\Id}G$, the Lie algebra of $G$,
\begeq
Ad _{A}: g \ni s \mapsto \frac{d}{dt}A.e^{ts}.A^{-1} \upharpoonleft _{t =0}
\in g
\endeq
for every $A \in \TT$. By $\exp $ we denote the exponential mapping $g \ra G$ with
respect to the natural metric on $G$ given by the Cartan-Killing form
\begeq
\langle a, b \rangle = - \mathrm{tr} \left( s \mapsto [a,[b,s]] \right)
\endeq
and $[ \. , \. ]$ is the Lie bracket $ g\times g \ra g$. The commutator $[a,b]$ is given by
the derivative of the mapping $\R \ra G$, $t \mapsto e^{a}.e^{tb}.e^{-a}$ at
$t = 0$.\footnote{The cyclicity of the definitions is only apparent.}

The decomposition of the adjoint action into eigenspaces, known as root-space
decomposition, reads as follows.

Firstly, let us denote by $\ft \subset g$ the Lie algebra of $\TT$, i.e. $\ft = T _{\Id}\TT$.
It is a maximal abelian algebra, i.e. a maximal subspace of $g$ where the
Lie bracket vanishes identically. There exist pairwise orthogonal spaces
$\C j_{\r} \approx \C \approx \R ^{2} $, $\C j_{\r} \hra g$, which are orthogonal to $\ft$
and indexed by a finite set $\D \subset \ft ^{*} \setminus \{ 0 \}$ (called the roots of
$g$ with respect to $\ft$), satisfying the
following properties:
\begin{itemize}
\item for every $\r \in \D$, the vectors $j_{\r}$ and $i.j_{\r}$ ( $i \in \C$ is the
imaginary unit) are orthogonal.
\item for every $\r \in \D $, there exists a vector $h_{\r} \in \ft \setminus \{ 0\}$,
orthogonal to $\C j_{\r}$ and such that
\begeq
[h_{\r} , j_{\r} ] = i.j_{\r} 
\endeq
plus cyclic permutations. This can be summarized by saying that the vector space generated
by $\{h_{\r} , j_{\r} , i.j_{\r} \}$ defines an embedding of $su(2)$, the Lie algebra of
$SU(2)$ which is isomorphic to $\R ^{3}$ equipped with its scalar and vector product, into
$g$. This embedding is denoted by $(su(2))_{\r}$.
\item For $a \in \ft$ and $z \in \C$,
\begeq
Ad_{e^{a}}.(z.j_{\r}) = e^{2i\pi \r (a)}.z.j_{\r}
\endeq
\item There exists a subset $\D _{+} \subset \D = \D _{+} \cup (-\D _{+}) $ of roots such that every root $\r \in \D$
can be written in the form
\begeq
\r = \sum _{\r ' \in \D _{+}} m(\r , \r ') \r '
\endeq
with $m(\r , \r ' )$ integers of the same sign. The distinction with respect to the sign
is essentially the same as that between elements below and above the diagonal of a
unitary matrix.
\end{itemize}
Obviously, the eigenvalues of $Ad _{e^{a}}$ are $\{ e^{2i\pi \r (a)} \}_{\r \in \D} \cup
\{0\}$. We remark that $\r : \ft \ra \R$ take values in the real line, so that all eigenvalues
are in the unit circle. Root-space decompositions with respect to different maximal tori
are equivalent, since they are obtained by the adjoint action of the group onto itself.

In the context of the study of quasi-periodic cocycles, the following definition is very important.

\begin{definitionn} \label{def res coc}
Given $\a \in \T ^{d}$, an element $A$ of $G$ will be called \textit{resonant} with respect
to $\a$ iff there exists $\r \in \D$ and $k _{\r} \in \Z ^{d} \setminus \{ 0 \}$ such that
\begeq
\r (a) - k_{\r} \. \a \in \Z
\endeq
where $a \in \ft $ is any preimage of $A$ under the exponential mapping.

A constant cocycle $(\a ,A) \in SW^{\infty}_{\a}(\T ^{d} , G)$ is resonant iff $A \in G$ is
resonant with respect to $\a$, and the vector of integers $(k_{\r})$ (which is unique if
$\a$ is irrational) is called resonant mode.
\end{definitionn}

\subsection{The almost reducibility theorem}

Given the above, we can state the almost reducibility theorem referred to in \S
\ref{sec intro cocycles} as follows.
\begin{theorem} [\cite{El2001}, \cite{KrikAst}, \cite{NKPhD}] \label{thm almost red}
Let $G$ be a compact Lie group, and $\a \in DC (\tilde{\gamma} , \tilde{\t } )$. Then,
there exist $s_{0} >0$ and $\e >0 $ such that if the cocycle $(\a , Ae^{F(\. )})$,
$A \in G $ and $F(\. ) \in \sm (\T ^{d} , g ) $ satisfies
\begeq
\| F(\. ) \|_{0} < \e \text{ and } \| F(\. ) \|_{s_{0}} < 1
\endeq
then it is Almost Reducible.

More precisely, the K.A.M. scheme that proves Almost Reducibility produces:
\begin{enumerate}
\item a fast increasing sequence $N_{n} \in \N ^{*}$, $N_{n+1} = N_{n}^{1+ \d }$ for some
$0 < \d < 1$
\item a sequence of constants $A _{n} \in G$
\item a number $\t > \tilde{\t }$, a subsequence $n_{k}$ and a set of resonant constants
$\{ \L _{k}  \} _{k =1}^{M} \subset G$ whose resonance $k_{n_{k}}^{\r} \in \Z ^{*}$ in each $(su(2))_{\r}$,
if it is defined, satisfies $|k_{n_{k}}^{\r} | \leq N_{n_{k}}$, as well as the following properties.
The constant $\L _{k}$ commutes with $A_{n_{k}}$ and $d(\L _{k}  , A_{n_{k}} ) <
N_{n_{k}} ^{-\t }$, if such an $\L _{n_{k}}$ exists.
The number
$M \in \N \cup \{ \infty \} $ counts the number of resonant steps of the K.A.M. scheme, i.e. those for which
$\L _{n_{k}}$ is defined
\item \label{item exp dec Y} a sequence of conjugations $Y_{n}(\. ) \in \sm (\T ^{d} ,g)$ satisfying
\begeq
\| Y_{n}(\. ) \|_{s} = O( N_{n}^{-\infty }), \forall s \geq 0
\endeq
\item \label{item pol growth B} a sequence of conjugations $B_{n_{k}}(\. ) \in \sm (\T ^{d} ,G)$ satisfying
\begeq
\| B_{n_{k}}(\. ) \|_{s} \simeq C_{s} N_{n_{k}}^{s+ \l }, \forall s \geq 0
\endeq
for some constant $\l >0$. These conjugations commute with the respective $A_{n_{k}}$ and
$\L _{n_{k}}$, and the constant
\begeq
B_{n_{k}}(\. + \a ). \L _{n_{k}} .B_{n_{k}}^{-1}(\. )
\endeq
is $N_{n_{k}}^{-\t }$-away from resonant constants. If $A_{n} $ is $N_{n}^{-\t }$-away
from resonant constants (i.e. if $n$ is not a resonant step) then $B_{n}(\. )$ is by
convention defined as $\equiv \Id$
\item a sequence of mappings $F_{n}(\. ) \in \sm (\T ^{d}, g)$ satisfying
\begeq
\| F_{n}(\. ) \|_{s} = O( N_{n}^{- \infty }), \forall s \geq 0
\endeq
\end{enumerate}
and such that the conjugation constructed iteratively following $G_{0} = \Id$ and
\begeq
G_{n} (\. )= \begin{cases}
e^{Y_{n}(\. )} G_{n-1} (\. ) \text{ if } n \notin \{ n_{k} \} \\
B_{n}(\. ) e^{Y_{n}(\. )} G_{n-1}(\. ) \text{ if } n \in \{ n_{k} \}
\end{cases}
\endeq
satisfies
\begeq
G_{n} (\. + \a ) Ae^{F(\. )} G_{n}^{-1}(\. ) = A_{n}. e^{F_{n}(\. )}
\endeq
\end{theorem}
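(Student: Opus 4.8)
The plan is to prove this by a quadratically convergent K.A.M. iteration, in the spirit of \cite{El2001} and \cite{KrikAst} (see also \cite{NKPhD}); I will only outline the architecture and highlight where the Diophantine hypothesis on $\a$ enters, since the bulk of the work is a standard, if lengthy, bookkeeping of estimates. The inductive hypothesis at stage $n$ is that we have conjugated $(\a , Ae^{F(\.)})$ to a cocycle $(\a , A_{n}e^{F_{n}(\.)})$ with $A_{n}\in G$ and $\|F_{n}\|_{s}=O(N_{n}^{-\infty})$ for $s$ in a suitable range; the base case $n=0$, $A_{0}=A$, $F_{0}=F$ is covered by the smallness assumption $\|F\|_{0}<\e$, $\|F\|_{s_{0}}<1$. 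I will then produce the conjugation $e^{Y_{n}(\.)}$ (and, at resonant stages, $B_{n}(\.)$) realizing the passage from $n$ to $n+1$, and fix $N_{n+1}=N_{n}^{1+\d}$.

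\textbf{The linearized equation and the non-resonant step.} First I would conjugate by $e^{Y(\.)}$ and, expanding via the Baker--Campbell--Hausdorff formula, reduce the elimination of the low modes $T_{N_{n}}F_{n}$ to the linearized, twisted cohomological equation $Y(\.+\a)+Ad_{A_{n}}\big(T_{N_{n}}F_{n}-Y\big)=(\text{constant part})$. Decomposing $g$ by the root-space decomposition relative to a maximal torus through $A_{n}$ (\S\ref{sec facts Lie groups}) and taking Fourier coefficients, the denominator attached to a mode $k\in\Z^{d}$, $|k|\le N_{n}$, in a block $(su(2))_{\r}$ is $e^{2i\pi(k\.\a\pm\r(a_{n}))}-1$, where $a_{n}\in\ft$ is a logarithm of $A_{n}$. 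If $A_{n}$ is $N_{n}^{-\t}$-away from resonant constants in the sense of Definition \ref{def res coc}, these denominators are bounded below by $\gtrsim N_{n}^{-\t}$ --- using $\a\in DC(\tilde\gamma,\tilde\t)$ for the scalar part $k\.\a$ and the non-resonance assumption for the twist --- so I can solve with a polynomial loss $\|Y_{n}\|_{s}\le C_{s}N_{n}^{\t}\|F_{n}\|_{s}=O(N_{n}^{-\infty})$, which yields the exponential decay of the $Y_{n}$. The new error is made of the quadratic BCH term, the tail $R_{N_{n}}F_{n}$ (already $O(N_{n}^{-\infty})$ by \ref{truncation est}), and the truncation--solution error; bounding these by \ref{composition}, \ref{composition 2} and \ref{truncation est} gives $\|F_{n+1}\|_{0}\lesssim N_{n}^{2\t}\|F_{n}\|_{0}^{2}+N_{n}^{-\infty}$ together with controlled higher norms, and choosing $\d>0$ small, $s_{0}$ large and $\e$ small enough makes the iteration close, $\|F_{n+1}\|_{s}=O(N_{n+1}^{-\infty})$.

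\textbf{The resonant step.} When $A_{n}$ lies within $N_{n}^{-\t}$ of a resonant constant $\L$ with resonance mode $(k^{\r})$, $|k^{\r}|\le N_{n}$, the above fails, and I would first conjugate by a mapping $B_{n}(\.)$ built explicitly from the resonance data: in each resonant block $(su(2))_{\r}$, $B_{n}(\.)$ is a geodesic of the form $x\mapsto\exp\!\big(2i\pi(k^{\r}\.x)\,h_{\r}\big)$ (up to an adjoint), which commutes with $A_{n}$ and $\L$ and, since $|k^{\r}|\le N_{n}$, satisfies $\|B_{n}(\.)\|_{s}\simeq C_{s}N_{n}^{s+\l}$. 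The conjugated constant $B_{n}(\.+\a)\,\L\, B_{n}^{-1}(\.)$ is then a new constant lying $\gtrsim N_{n}^{-\t}$-away from every resonant constant of mode size $\le N_{n}$: here $\a\in DC(\tilde\gamma,\tilde\t)$ enters decisively, once to see that two distinct resonance modes $k\ne k'$ of size $\le N_{n}$ produce resonant constants at mutual distance $\gtrsim|k-k'|^{-\tilde\t}\gtrsim N_{n}^{-\tilde\t}$, and once --- with the choice $\t>\tilde\t$ --- to guarantee that the new constant does not re-enter resonance before the scale has grown sufficiently. After this single conjugation I am back in the non-resonant regime and perform an ordinary step; the stages where this occurs form the subsequence $\{n_{k}\}$, finite or infinite, counted by $M$, with $\L_{k}=\L$.

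\textbf{Assembling, and the main obstacle.} Composing $G_{n}=e^{Y_{n}(\.)}G_{n-1}(\.)$ off resonant steps and $G_{n}=B_{n}(\.)e^{Y_{n}(\.)}G_{n-1}(\.)$ at resonant steps yields the stated identity $G_{n}(\.+\a)Ae^{F(\.)}G_{n}^{-1}(\.)=A_{n}e^{F_{n}(\.)}$ with $\|F_{n}\|_{s}=O(N_{n}^{-\infty})$, which is precisely Almost Reducibility; note that I do not need $G_{n}$ to converge --- it does not when $M=\infty$, exactly because the $B_{n_{k}}$ grow polynomially. The hard part is the resonance bookkeeping of the resonant step: showing that at each scale there is at most one active resonance per block, that the size of $B_{n_{k}}$ is \emph{exactly} of order $N_{n_{k}}^{s+\l}$ and no worse (this precise polynomial rate is what the applications in \S\ref{sec AR} will use), and --- most delicately --- that after applying $B_{n_{k}}$ the cocycle stays $N^{-\t}$-resonance-free for all scales $N$ up to the next resonant stage, $N_{n_{k+1}}\gtrsim N_{n_{k}}^{1+\d}$, so that the quadratic mechanism runs uninterrupted in between. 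The remaining verifications --- that the exponents $\d$, $\t$, $\l$, $s_{0}$ and the threshold $\e$ can be chosen consistently, and that the high-norm estimates propagate --- are routine, and I would defer them to \cite{El2001}, \cite{KrikAst}, \cite{NKPhD}.
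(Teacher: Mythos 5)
The paper does not prove this theorem: it is recalled from \cite{El2001}, \cite{KrikAst}, \cite{NKPhD} exactly as a black box, with no argument supplied in the text. Your outline is a faithful sketch of the KAM scheme from those references — the same inductive normal form $A_{n}e^{F_{n}}$ with super-polynomial error decay, the same twisted linearized equation solved block-by-block in the root-space decomposition, the same dichotomy between non-resonant steps (handled by $e^{Y_{n}}$ with polynomial loss controlled by $\a\in DC$) and resonant steps (handled by a periodic geodesic $B_{n}$ along the resonant torus direction that removes the resonance at polynomial cost $N_{n}^{s+\l}$), and the same reason $G_{n}$ need not converge. You also correctly locate the genuinely delicate point, the resonance bookkeeping and the verification that after applying $B_{n_{k}}$ the constant stays resonance-free up to the next scale $N_{n_{k+1}}$; this is indeed where most of the technical work in \cite{El2001}/\cite{KrikAst}/\cite{NKPhD} lives, and you are right to defer the full estimates there rather than reproduce them. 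The approach is the same as the one the paper invokes.
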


\begin{remark}
The parameter $\t$ in the statement of the theorem corresponds to the parameter $\t$ in
assumption \ref{ass decay eta}.
\end{remark}

As we have already pointed out in \cite{NKInvDist} and \cite{NKContSpec}, the close-to-the-$\Id$
conjugations $Y_{n}$ are highly redundant. If we rearrange them with the $B_{n_{k}}$ following
\begeq
B_{n_{2}}e^{Y_{n_{2}}}\cdots e^{Y_{n_{1}+1}} B_{n_{1}}e^{Y_{n_{1}}}\cdots e^{Y_{1}}=
B_{n_{2}}B_{n_{1}}e^{\tilde{Y}_{n_{2}}}\cdots e^{\tilde{Y}_{n_{1}+1}} e^{Y_{n_{1}}}\cdots e^{Y_{1}}
\endeq
and similarly for the rest of the steps, the product formed by the $e^{\tilde{Y}_{n}}$
converges thanks to items \ref{item exp dec Y} and \ref{item pol growth B} of thm.
\ref{thm almost red}. If we call the resulting product $D (\. )$, then the cocycle
\begeq
\tilde{A } (\. ) = D(\. + \a) Ae^{F(\. )}D^{-1}(\. )
\endeq
is in what we called \textit{K.A.M. normal form} in the references. This means that, up to
a second order perturbation, the cocycle
\begeq
(B_{n_{k}} \cdots B_{n_{1}})(\. + \a )\tilde{A}(\. ) ((B_{n_{k}} \cdots B_{n_{1}}))^{-1}(\. ) = \tilde{A}_{k}e^{\tilde{F}_{k}(\. )}
\endeq
is either constant or has the following particular structure. Consider a root-space with
respect to a torus passing by $\tilde{A}_{k}$. If the root $\r $ is not resonant for
$\L _{k}$, then the restriction of $\tilde{F}_{k}(\. )$ in $(su(2))_{\r}$ is a constant. The
constant is in $\ft$ if the corresponding eigenvalue of $\L_{n} $ is not equal to $1$, and the constant is
in $(su(2))_{\r}$ if the eigenvalue is equal to $1$. If $\r $ is resonant,
for the resonance $k_{\r}$, the restriction is a constant in $\ft $ plus
$\hat{\tilde{F}}(k_{\r})e^{2i\pi k_{\r}\.}j_{\r}$. In short, to the first order only
the resonant modes are active, and this particular structure allows us to accurately estimate
the commutativity (or lack thereof) of the constants $\tilde{A}_{n_{k}}$ and $\tilde{A}_{n_{k}+1}$. Informally,
if the commutator is significantly away from the $\Id$ for an infinite number of resonant steps,
then the dynamics will be weakly mixing in the fibers (c.f. \cite{NKContSpec}). If the commutators
visit a certain small set depending only on $G$ infinitely often, the dynamics will be $DUE$ (c.f. \cite{NKInvDist}
for the case $G = SU(2)$).

In what follows, we assume the cocycle in normal form and
ommit the tilde in the notation, while keeping the rest of the notation the same.

\subsection{The study of the cohomological equation}

The study of the invariant distributions of almost reducible cocycles was possible because
of the relation between Almost Reducibility and the Anosov-Katok construction. The former,
obtained by an application of K.A.M. theory proves "almost rigidity" for perturbations of
constant cocycles (i.e. perturbations of constant cocycles are almost conjugate to
constant ones) and gives very good control on the failure of rigidity. When rigidity
fails, i.e. when a perturbation of a constant cocycle is not conjugate to a constant one,
this control allows almost reducibility to be interpreted as a fast approximation by
conjugation scheme for the given cocycle. The approximant cocycles are resonant (their
coboundary space is smaller than that of generic constant ones), and the estimates furnished
by the K.A.M. scheme make analysis extremely efficient.

In what follows, we adapt notation from the references to notation of the present work.
In \cite{NKRigidity}, we proved, for cocycles in $\T ^{d} \times SU(2)$,
that when prop. \ref{prop approx} is not applicable because the rate of convergence
is polynomial with respect to the corresponding sequence $\l _{N_{n}}$, the cocycle
is $\sm$ reducible. In \cite{NKInvDist} we showed that when the rate is exponential
(and it is so for a generic Almost Reducible cocycle), the cocycle is generically
$DUE$. In the same work we showed that such dynamical systems are never
$CR$, and the proof of cor. \ref{cor no counter-ex} is an abstraction and a generalization
of that proof.

The connection with the Anosov-Katok construction is established via the following
dictionary. If we take $\CC$ to be the class of constant, diagonal, resonant cocycles,
then, by minimality of $\a$, $\bar{\CC}$ is the class of diagonal cocycles. By definition,
$\TT$ is dense in the class of reducible cocycles. The almost reducibility theorem then
implies that $\AKsm (\CC)$ contains the open set that we call the KAM regime.

In fact, the following theorem is well in the reach of the techniques developed in
\cite{NKRigidity} and \cite{NKInvDist}, but without the tools developed in the present
article, the proof would be unnecessarily involved.
\begin{theorem} \label{thm no counter ex in KAM regime}
Let $d \in \N ^{*}$ and $G$ be a compact group. Then, in the K.A.M. regime of
$SW^{\infty }(\T ^{d} , P)$, there are no counter-examples to conjecture \ref{conj CR}.
\end{theorem}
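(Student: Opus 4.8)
The plan is to apply Corollary~\ref{cor no counter-ex} (or rather the machinery behind it, i.e. propositions \ref{example prop comp gr}, \ref{example prop rot vec 1}, \ref{example prop rot vec 2}) directly to the K.A.M. regime, using Theorem~\ref{thm almost red} to supply the Anosov-Katok-like structure. First I would fix $G$ a compact group, $\a\in DC(\tilde\gamma,\tilde\t)$, and let $\CC$ be the class of constant, diagonal, resonant cocycles over $\a$ in $SW^{\infty}_{\a}(\T^{d},P)$ (composed with the fibration $P=G/H$). By minimality of $R_{\a}$, $\overline{\CC}$ is the class of diagonal cocycles, and $\TT$, the conjugacy class, is dense in the reducible ones. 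The almost reducibility theorem then says precisely that the perturbative K.A.M. neighbourhood of the constants is contained in $\AKsm(\CC)$: the conjugations $G_{n}$ built there realize every almost reducible cocycle as a limit $f_{n}=H_{n}\circ\tilde f_{n}\circ H_{n}^{-1}$ with $\tilde f_{n}\in\CC$ the resonant constants $\L_{k}$ (interpolated by the non-resonant trivial steps), and $H_{n}=B_{n_{k}}\cdots B_{n_{1}}$ up to the redundant close-to-identity factors that are absorbed in $D(\cdot)$.

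The second step is to check that this identification satisfies the standing assumptions \ref{ass inf dim ob}--\ref{ass decay eta}. Assumption~\ref{ass inf dim ob}: a constant diagonal cocycle over a Diophantine $\a$ is cohomologically stable and, because $P$ is not a torus, preserves infinitely many independent distributions of some fixed finite regularity $\s$ --- these come from the non-trivial irreducible $G$-representations appearing in $L^{2}(P)$, each contributing an invariant distribution supported on a single Fourier mode of $\T^{d}$, so a uniform $\s$ works; assumption on $\overline{\CC}$ not being $DUE$ is the same statement for diagonal cocycles. Assumptions~\ref{ass fast conv} and \ref{ass size conj}: item 1 of Theorem~\ref{thm almost red} gives $N_{n+1}=N_{n}^{1+\d}$, hence fast growth of the scales, and the perturbations $F_{n}=O(N_{n}^{-\infty})$ together with the polynomial bound $\|B_{n_{k}}\|_{s}\simeq C_{s}N_{n_{k}}^{s+\l}$ (item \ref{item pol growth B}) are exactly the fast-convergence and polynomial-conjugation hypotheses. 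Assumption~\ref{ass decay eta} is the content of item 3 of Theorem~\ref{thm almost red}: $d(\L_{k},A_{n_{k}})<N_{n_{k}}^{-\t}$ with $\t>\tilde\t$, and the remark right after the theorem statement records that this $\t$ is the $\t$ of assumption~\ref{ass decay eta}. The truncation order $N(\tilde f)$ attached to a resonant constant $\L_{k}$ with resonance $|k_{\r}|\le N_{n_{k}}$ is comparable to $N_{n_{k}}$, so $\l_{N_{n}}\asymp N_{n}$ up to the bounded distortion of the Weyl law on $\T^{d}\times P$, and the spectral lower bound \ref{eq spec un} holds because the invariant distribution of a resonant constant is supported in modes $\lesssim N_{n_{k}}$.

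The third step is the dichotomy. If the construction is in the compact-group case (assumption~\ref{ass comp gr case} --- obstructions survive, possibly conjugated, for at least one step along a subsequence), Proposition~\ref{example prop comp gr} applies and $f$ is not $CR$. Otherwise we are in the rotation-vector case (assumption~\ref{ass rot vec case}); if moreover the decay of $\tilde\eta_{s,n}$ is super-polynomial in $\l_{N_{n}}$ (eq.~\ref{eq exp dec eta}), Proposition~\ref{example prop rot vec 1} applies; and in the genuinely polynomial regime, Proposition~\ref{example prop rot vec 2} applies provided $\k<\t$, where $\k$ is the exponent controlling the cost of inverting the coboundary operator of a resonant constant at scale $N_{n+1}$ --- and here I would invoke the explicit inversion of the cohomological equation over a resonant constant diagonal cocycle carried out in \cite{NKInvDist} and \cite{NKRigidity}: on the resonant mode the small denominator is exactly $N_{n_{k}}^{-\t}$-type, so $\k$ is comparable to $\t$ but the margin built into item 3 ($\t>\tilde\t$, and one may take the resonance exponent strictly below $\t$) gives $\k<\t$; outside the resonant mode the denominators are Diophantine-large and contribute nothing to $\k$. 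Combining with the genericity of $DUE\setminus CR$ from \cite{AFKo2015} and \cite{NKInvDist} (and the fact that $DUE$ is a $G_{\d}$ property, eq.~\ref{eq DUE is Gd}) finishes the theorem. The main obstacle I expect is the verification of assumption~\ref{ass cob op C}, i.e. pinning down $\k$: one must read off from the K.A.M. normal form that the only source of blow-up in the solution of $\tilde\psi\circ\tilde f_{n+1}-\tilde\psi=\tilde\w_{n}\circ h_{n}$ is the single resonant mode, and that its size is governed by a power of $N_{n+1}$ strictly smaller than $\t$; everything else --- the order-of-magnitude bookkeeping, the passage through Lemma~\ref{lem orders magn} and Lemma~\ref{lem closed graph} --- is routine once the dictionary $\CC\leftrightarrow$ resonant constants, $N_{n}\leftrightarrow$ K.A.M. scales, $\t\leftrightarrow$ K.A.M. $\t$ is in place.
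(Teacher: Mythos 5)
Your proposal and the paper's proof diverge at a key juncture, and the divergence leaves a genuine gap in yours. The paper's proof is much shorter: it observes that in the K.A.M. regime the almost reducibility theorem produces a sequence $(\a,A_{n}(\cdot))\to(\a,A(\cdot))$ that satisfies all conditions of Proposition~\ref{prop approx} \emph{except possibly} the fast-approximation condition $\d_{\s,n}=O(\l_{N_{n}}^{-\infty})$, and then splits into exactly two cases. If $d(A_{n_{k}},\L_{k})=O(N_{n_{k}}^{-\infty})$, Proposition~\ref{example prop rot vec 1} kicks in and the cocycle is not $CR$. If instead $d(A_{n_{k}},\L_{k})\geq\gamma'N_{n_{k}}^{-\t'}$ (the sharply polynomial regime), the paper invokes the \emph{differentiable rigidity} theorem of \cite{NKRigidity}: the cocycle is then $\sm$ reducible, the phase space foliates into invariant tori, and so the cocycle is not even $DUE$, let alone $CR$. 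Proposition~\ref{example prop rot vec 2} and Assumption~\ref{ass cob op C} never appear; the paper says this explicitly right after the proof: ``proposition \ref{example prop rot vec 2} is not needed here, since the theorem of differentiable rigidity (\cite{NKRigidity}) kicks in when the rate of decay of $\eta_{s,n}$ is polynomial with respect to the resonances.''

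You, by contrast, route the polynomial regime through Proposition~\ref{example prop rot vec 2}, which forces you to verify Assumption~\ref{ass cob op C} with $\k<\t$, and you correctly flag this as your ``main obstacle.'' Your argument there --- that the small denominator on the resonant mode is ``$N_{n_{k}}^{-\t}$-type'' so $\k$ is ``comparable to $\t$'' and the ``margin'' in item 3 gives $\k<\t$ --- is heuristic and does not constitute a proof; indeed the whole point of Proposition~\ref{example prop rot vec 2} is that $\k<\t$ is a delicate, model-dependent condition, and the paper deliberately avoids having to establish it in this setting. Similarly, your opening split between the compact-group case (Proposition~\ref{example prop comp gr}) and the rotation-vector case is not part of the paper's proof; the paper mentions, only as a remark after the proof, that ``one works a bit harder'' to place non-reducible cocycles under the hypotheses of Proposition~\ref{example prop comp gr} as well. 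So the missing idea is this: in the polynomial regime you should not try to estimate the coboundary operator at all; you should instead appeal to the rigidity result, which turns polynomial decay of $d(A_{n_{k}},\L_{k})$ into reducibility and hence into an invariant foliation obstructing $DUE$.
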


\begin{proof}
 The K.A.M. regime is the set of cocycles $(\a ,A(\. ))$ to which thm. \ref{thm almost red}
applies. We assume the cocycle in K.A.M. normal form.

When the space $\T ^{d} \times P$ is equipped
with its natural Riemannian structure, almost reducibility, interpreted as in
\cite{NKInvDist} or \cite{NKContSpec}, produces a sequence of cocycles $(\a ,A_{n}(\. ))$
converging to $(\a ,A(\. ))$ which, together with the sequence $N_{n}$ satisfy the
conditions of prop. \ref{prop approx} for any $\s \geq 0$, except possibly for the fast approximation
condition $\d _{\s ,n} = O(\l _{N_{n}}^{-\infty})$.

Thus, two cases can occur.
\begin{enumerate}
\item Either the fast approximation condition is satisfied, something which translates to
\begeq
d (A_{n_{k}} , \L _{k}) = O(N_{n_{k}}^{- \infty})
\endeq
and $(\a ,A(\. )) \notin CR$ by prop \ref{example prop rot vec 1}.
\item Or there exist $\gamma ',\t ' >0$ such that
\begeq
d (A_{n_{k}} , \L _{k}) \geq \gamma ' N_{n_{k}}^{- \t '}
\endeq
In this case, an adjustment of the parameters of the scheme, as in \cite{NKRigidity} can
show that the cocycle is actually reducible, which results in the phase space foliating in
invariant tori.
\end{enumerate}
The proof is complete.
\end{proof}

%

If one works a bit harder and generalizes the proof of genericity of $DUE$ in the KAM regime
of $SW^{\infty}_{\a}(\T ^{d},SU(2))$, where $\a \in CD$, they can show that non-reducible
cocycles in the KAM regime of $SW^{\infty}_{\a}(\T ^{d},P)$ satisfy the hypothesis of both
propositions \ref{example prop comp gr} and \ref{example prop rot vec 1}.

For the former, it is so because the functional space of each representation is invariant.
If one considers a sequence of approximating cocycles $(\a ,A_{n}(\. ))$, then the following
holds. If one imposes a condition guaranteeing that assumption \ref{ass rot vec case} holds
in one irreducible representation $\pi _{1}$, then there will exist another representation
$\pi_{2}$ for which assumption \ref{ass comp gr case} holds. Moreover, one can choose the
difference of the dimensions of $\pi _{1}$ and $\pi _{2}$ to be bounded, so that all
assumptions of proposition \ref{example prop comp gr} be satisfied.

For the latter, one can actually show that, if one restricts to the functional space
corresponding to a single irreducible representation, assumption \ref{ass comp gr case} is
still satisfied. This is so, because for all admissible conjugations, measures preserved
by the cocycles $(\a ,B_{n}(\. +\a ) A_{n}(\. ) B_{n}(\. )^{-1} )$ and $(\a ,A_{n+1}(\. ))$
have an intersection whose dimension is equal to the dimension of the representation
(see lemma 7.9 of \cite{NKInvDist}, item 1 of the conclusions for the case $P=SU(2)$).
Consequently, even in each invariant function space, proposition \ref{example prop comp gr}
applies. Cohomological rigidity in such spaces seems, therefore, hopeless, as multiple
obstructions appear.

We remark that proposition \ref{example prop rot vec 2} is not needed here, since the
theorem of differentiable rigidity (\cite{NKRigidity}) kicks in when the rate of decay of
$\eta _{s,n}$ is polynomial with respect to the resonances.

The other important model of cocycles, apart from constant ones, is given by the periodic
geodesics of the group $G$ (see \cite{Krik2001} or \cite{NKPhD}, chapters $4$ and $8$). They do not constitute
a good basis for an $\AKsm$ space, though, for the following reason. They are modeled upon
the parabolic map
\begeq
\T \times \T \ni (x,y) \mapsto (x+ \a , y + r x) \in \T \times \T
\endeq
for some $r \in \N^{*}$, instead of a the quasi-periodic mapping
\begeq
\T \times \T \ni (x,y) \mapsto (x+ \a , y + \beta ) \in \T \times \T
\endeq
modelling the constant ones. Invariant distributions for the parabolic map can be calculated by hand, or see \cite{Kat01}. The
calculation shows that, unless one allows $r \ra \infty$, the assumptions of lem.
\ref{lem approx dist comp} are satisfied and no $DUE$ example can be constructed in the
corresponding space.

As long as Almost Reducible cocycles and cocycles that can be conjugated arbitrarily close
to periodic geodesics of $G$ fill $SW_{\a}^{\infty}(\T ,G)$ for some $\a$, then no counter-examples
to conj. \ref{conj CR} exist in that space. Theorem $1.3$ in \cite{NKPhD} argues that
this is the case when $\a \in RDC$, which proves corollary \ref{cor no counter-ex in SW}.
%

\section{Conclusions and comments}

The proof of corollary \ref{cor no counter-ex} shows that, if one wishes to
construct a counter-example to the Herman-Katok conjecture, they have indeed a very
difficult task to accomplish, since using the most powerful method for constructing
realizations of non-standard dynamics in the elliptic case appears to be a bad strategy. As
soon as approximation is fast, cf. def. \ref{def fast approx}, they need to be able
to treat all obstructions arising at each step in the immediately next one, and they should
be able to do so with estimates that seem to be out of reach for the existing arsenal of
examples. On the other hand, slow approximation (i.e. at a polynomial rate) seems to
result in the persistence of some structure obstructing $DUE$.

The whole approach of the article comes from intuition built on elliptic dynamics, and
especially quasi-periodic dynamics. This context is precisely the origin of the Katok-Herman
conjecture, which informally states that K.A.M. theory is perturbation theory for
rotations in tori, and that only they can serve as its linear model.

Our approach seems to be disjoint from those in the litterature. For example, the proof of
the conjecture for flows in dimension $3$ (\cite{Forn08}, \cite{Koc09}, \cite{Mats09},
\cite{RHRH06}),
the first non-trivial case for the continuous-time version of the conjecture, is based on
techniques and results from dynamical systems and differential topology, and some very
heavy machinery from symplectic topology. This symplectic topology machinery, namely the
Weinstein conjecture, is used in order to exclude the case where the vector field is the
Reeb vector field of a contact form in a cohomological sphere, in which case $CR$ fails
quite dramatically, due to the existence of periodic orbits.

The study of the cohomological equation for circle diffeomoprhisms in \cite{AKo11} uses
the renormalization scheme and depends very heavily on the existence of an order in
$\T ^{1}$ which has been systematically exploited throughout the developement of the
theory (cf. \cite{Herm79}, \cite{YocAst}).

The study of the cohomological equation for homogeneous flows (\cite{FlFoRH13}, \cite{FlaFor07}), is
naturally based on representation theory, as was the article by the author, \cite{NKInvDist}.
The works by L. Flaminio, G. Forni and F. Rodriguez-Hertz go further and deeper than the
verification of conj. \ref{conj CR}, but as far as the conjecture itself is concerned,
\textit{a posteriori} they seem to be more or less the end of the road. This is so, because
such flows or diffeomorphisms are found to always have an infinite codimensional space of
coboundaries, and, consequently, in those classes $CR$ fails quite dramatically. Of course,
the infinite codimensionality of the coboundary space is precisely the object of the proof
and it is \textit{a priori} not at all obvious.
%

On the other hand, in the space of cocycles in $\T ^{d} \times SU(2)$, the space studied
by the author in \cite{NKInvDist}, $DUE$ is a generic property, and the proof of the 
non-existence of
$CR$ examples can give some insight into the mechanism that both creates $DUE$ and
obstructs $CR$. This insight is precisely what led to the present article.

Despite the fact that harmonic analysis on manifolds is less efficient and
less elegant than representation theory for homogeneous spaces, the present article
suggests that its use can lead to advances in the study of the conjecture.

\bibliography{aomsample}
\bibliographystyle{aomalpha}

\end{document}